\numberwithin{equation}{section} \overfullrule 5pt
\newtheorem{Theorem}{Theorem}[section]
\newtheorem{Conjecture}[Theorem]{Conjecture}
\newtheorem{Proposition}[Theorem]{Proposition}
\newtheorem{Lemma}[Theorem]{Lemma}
\newtheorem{Lemma and Definition}[Theorem]{Lemma and Definition}
\newtheorem{Observation}[Theorem]{Observation}
\theoremstyle{definition}
\newtheorem{Example}{Example}[section]
\newtheorem{Remark}{Remark}[section]
\newenvironment{manualtheorem}[1]{%
  \manualtheoreminner
}{\endmanualtheoreminner}
\DeclareMathOperator{\Stiel}{Stiel}
\DeclareMathOperator{\CF}{CF}
\DeclareMathOperator{\Res}{Res}
\DeclareMathOperator{\F2}{\mathbb{F}_2}
\title[Algebraicity of Thue-Morse continued fractions
]{%
	On the algebraicity of Thue-Morse and period-doubling continued fractions 
}
\date{May. 23, 2020}
\author{Yining Hu}
\address{School of Mathematics and Statistics, 
Huazhong University of Science and Technology, Wuhan, PR China}
\email{huyining@hust.edu.cn}
\author{Guoniu Wei-Han}
\address{I.R.M.A., UMR 7501, Universit\'e de Strasbourg
et CNRS, 7 rue Ren\'e Descartes, 67084 Strasbourg, France}
\email{guoniu.han@unistra.fr}
\subjclass[2010]{11B85, 11J70, 11B50, 11Y65, 05A15, 11T55}
\keywords{algebraicity, automatic sequence, continued fraction, Thue-Morse sequence}
\begin{document}
\begin{abstract} 
We put forward several general conjectures concerning the algebraicity or transcendence
of continued fractions and Stieltjes continued fractions defined by the 
	Thue-Morse and period-doubling sequences in characteristic $2$. 
We present our Guess'n'Prove method, in which we exploit the structure of 
	automata, for proving some of our conjectures in special cases. 
	\end{abstract}


\maketitle

\section{Introduction}
\subsection{Background}
We are interested in the continued fractions and Stieltjes continued fractions
defined by automatic sequences in finite characteristic, and more precisely their algebraicity or transcendence. We give here the background and motivation
for studying such problems. The definitions of related notions will given in subsection
\ref{prel}.

The link of automaticity and algebraicity goes back to the well-known Theorem
of Christol, Kamae, Mend\`es France and Rauzy \cite{Christol1980KMFR} 
which states that 
a formal power series in $\mathbb{F}_q[[x]]$ is algebraic over $\mathbb{F}_q(x)$
if and only if the sequence of its coefficients is $q$-automatic.
The situation is completely different for real numbers.
In 2007, Adamczewski and Bugeaud \cite{Adamczewski2007B} proved that for an integer 
$b\geq 2$, if 
the $b$-ary expansion of an irrational real number $u$ form an automatic 
sequence, then $u$ must be transcendental.
In 2013, Bugeaud \cite{Bugeaud2013} proved that the continued fraction 
expansion of an algebraic real number of degree at least $3$ is not automatic.

As with real numbers, a formal Laurent series can also be represented by
a continued fraction whose partial quotients are polynomials. 
Unlike for real numbers, the continued fraction expansion of an algebraic
Laurent series of degree at least $3$ may or may not have automatic partial
quotients \cite{Baum1976S, Baum1977S, Mills1986R, Allouche1988, Mkaouar1995, 
Lasjaunias2015Y,  Lasjaunias2016Y, Lasjaunias2017Y}; see also the introduction
of \cite{hh2}.
 
We could also ask the converse question: what can we say about 
the algebraicity of a continued fraction whose partial quotients form an 
automatic sequence? To our knowledge, little has been done in this direction. 
The authors \cite{hh2} proved that the Stieltjes continued
fractions defined by the Thue-Morse sequence and the period-doubling sequence
in $\mathbb{Z}[[x]]$ are congruent, modulo $4$, to algebraic series
in $\mathbb{Z}[[x]]$. 
In 2020, Wu \cite{Wu2020} obtained similar results concerning the Stieltjes
continued fractions defined by the paperfolding sequence and the Golay-Shaprio-Rudin sequence. 

In this article we propose to approach this problem with the most classical
example of automatic sequences, the Thue-Morse sequence.

\subsection{Preliminaries}\label{prel}
We introduce the necessary notions for stating the conjectures and our main results.
\subsubsection{Automatic sequnces}
A sequence is said to be {\it  $k$-automatic} if it can be generated by a $k$-DFAO ({\it deterministic finite automaton with output}).
For an integer $k\geq 2$,  a $k$-DFAO 
is defined to be a $6$-tuple
$$M=(Q,\Sigma, \delta, q_0, \Delta, \tau)$$
where $Q$ is the set of states with $q_0\in Q$ being the initial state, $\Sigma=\{0,1,\ldots,k-1\}$ the input alphabet, $\delta:Q\times \Sigma\rightarrow Q$ the transition function, $\Delta$ the output alphabet, and $\tau:Q\rightarrow \Delta$ the output function.
The $k$-DFAO $M$ generates a sequence $(c_n)_{n\geq 0}$ in the following way: for each non-negative integer $n$, the base-$k$ expansion of $n$ is read by $M$ from right to left starting from the initial state $q_0$, and the automaton moves from state to state according to its transition funciton $\delta$. When the end of the string is reached, the automaton halts in a state $q$, and the automaton outputs the symbol $c_n=\tau (q)$.

A necessary and sufficient condition \cite{Eilenberg1974} for a sequence to be 
$k$-automatic is that its {\it $k$-kernel}, defined as 
 $$\{ (u_{k^d n+j})_{n\geq 0} \mid d\in \mathbb{N},\, 0\leq j\leq k^d-1 \},$$
is finite. 
 If we let $\Lambda_{i}^{(k)}$ denote the operator that sends a 
 sequence $(u(n))_{n\geq 0}$ to its subsequence $(u(k n+i))_{n\geq 0}$,
 then the $k$-kernel can be defined alternatively 
as the smallest set containing $\bf{u}$ that is stable under $\Lambda_i^{(k)}$ for $0\leq i<k-1$. We write $\Lambda_i$ instead of $\Lambda_i^{(k)}$ when the value of $k$ is clear from the context.
We will use the fact that for an integer $m\geq 1$, a sequence is $k$-automatic
if and only if it is $k^m$-automatic \cite{Eilenberg1974}.

For a $k$-automatic sequence $\mathbf{u}$, we can construct a $k$-DFAO 
that generates it from its $k$-kernel. The set of states $Q$ will be in 
bijection with the $k$-kernel, so we choose to identify them. 
For $q\in Q$ and $0\leq j< k$, the 
value of the transition function $\delta(q,j)$ is defined as $\Lambda_j q$;
the output function $\tau$ maps $q$ to the $0$-th term of $q$.
This automaton has the property that leading $0$'s in the input
 does not change the output. It is minimal
among $k$-automata with this property that generates $\mathbf{u}$.

We refer the readers to  \cite{Allouche2003Sh} for a comprehensive exposition 
of automatic sequences.

In this article we will consider the Thue-Morse sequence and the period-doubling
sequence. For two distinct element $a$ and $b$ from an alphabet, the 
$(a,b)$-Thue-Morse sequence is the sequence $\mathbf{t}$ defined as 
the fixed point $s^\infty(a)$ 
of the substitution $s: a\mapsto ab,\; b\mapsto ba$;
the $(a,b)$-period-doubling sequence $\mathbf{p}$ is defined as the fixed point
$\sigma(a)$ of the substitution $\sigma: a\mapsto ab,\; b\mapsto aa$.

\subsubsection{Continued fractions}
Let $K$ be a field.  
Given a sequence of polynomials $a_j(z)\in K[z]\backslash K$,
 we may define the infinite continued fraction
\begin{equation}\label{eq:ncf}
	\CF(\mathbf{a}(z)):=\cfrac{1}{a_0(z)+\cfrac{1}{a_1(z)+\cfrac{1}{a_2(z)+\cfrac{1}{\ddots}}}}
\end{equation}
as the limit of the finite continued fractions
\begin{equation}\label{eq:fncf}
	\CF_n(\mathbf{a}(z))=\cfrac{1}{a_0(z)+\cfrac{1}{a_1(z)+\cfrac{1}{\ddots+\cfrac{1}{a_n(z)}}}}
	\in K((1/z)).
\end{equation}
The existence of the limit is guaranteed by the convergence theorem, 
whose proof is completely analogous to that for the classical continued fracions
with positive integer partial quotients.

Define the sequences $(P_n(z))$ and $(Q_n(z))$ by 
\begin{equation}\label{eq:pq}
	\begin{pmatrix} P_n(z)&Q_{n}(z)\\ P_{n-1}(z)&Q_{n-1}(z) \end{pmatrix}
:=
	\begin{pmatrix} a_n(z)&1\\ 1&0 \end{pmatrix}
		\begin{pmatrix} a_{n-1}(z)&1\\ 1&0 \end{pmatrix}
	\cdots
	\begin{pmatrix} a_0(z)&1\\ 1&0 \end{pmatrix}
	\begin{pmatrix} 0&1\\ 1&0 \end{pmatrix}
\end{equation}
for $n\geq 0$, 
then 
$$\CF_n(z;a,b)=P_n(z)/Q_n(z)\in K((1/z)),$$
for $n\geq 0$. 
Note that here rational fractions are expanded in $1/z$.  
	The unsimplified fraction $P_n(z)/Q_n(z)$ is called the
$n$-th {\it	convergent} of $\CF(x;\mathbf{a})$.

\medskip
Conversely, let
\begin{equation}
	f(z)=c_nz^n+c_{n-1}z^{n-1}+\cdots+c_0+c_{-1}z^{-1}+\cdots
\end{equation}
be an arbitrary element of $K((1/z))$.
Define the integer part of $f(z)$ as
\begin{equation}
	[f(z)]=c_nz^n+c_{n-1}z^{n-1}+\cdots+c_0.
\end{equation}

Set $f_0=f$, $a_0=[f_0]$, $f_0=a_0+1/f_1$, $a_1=[f_1]$, $f_1=a_1+1/f_2$, 
$a_2=[f_1]$...
Then $a_0\in K[z]$ and $a_j\in K[z]\backslash K$ for $j\geq 1$, and $f(z)$
admits the following continued fraction expansion 
\begin{equation}\label{eq:ncfe}
	f(z)=a_0(z)+\cfrac{1}{a_1(z)+\cfrac{1}{a_2(z)+\cfrac{1}{a_3(z)+\cfrac{1}{\ddots}}}}.
\end{equation}

\subsubsection{Stieltjes continued fractions}
Let $(u_j)_{j\geq 0}$ be a sequence taking value in $K^\times$, the Stieltjes continued
fraction  
\begin{equation}\label{eq:sfrac}
	\Stiel(x;\mathbf{u}):
	=\cfrac {u_0}{ 1+ \cfrac{u_1x}{ 1+ \cfrac{u_2x}{ 1+\cfrac {u_3x}{\ddots}}} }
\end{equation}
is defined to be the limit of the finite Stieltjes continued fractions
\begin{equation}\label{eq:tms}
	\Stiel_n(x;\mathbf{u}):
	=\cfrac {u_0}{ 1+ \cfrac{u_1x}{ 1+ \cfrac{u_2x}{ 1+\cfrac{\ddots} { 1+u_nx}}} }
	\in K[[x]].
\end{equation}
It can be easily shown that the sequence $\Stiel_n(x;\mathbf{u})$ is convergent.

Define the sequence $(P_n(x))$ and $(Q_n(x))$ by 
\begin{equation}
	\begin{pmatrix}P_n(x)&Q_n(x)\\P_{n-1}(x)&Q_{n-1}(x)\end{pmatrix}:=
\begin{pmatrix} 1&u_{n}x\\ 1&0\end{pmatrix}
\begin{pmatrix} 1&u_{n-1}x\\ 1&0\end{pmatrix}
	\cdots \begin{pmatrix} 1&u_0x \\1&0\end{pmatrix}
		\begin{pmatrix}0&1\\1/x&0\end{pmatrix}
\end{equation}
	for $n\geq 0$. Then 
	$$\Stiel_n(x;\mathbf{u})=\frac{P_n(x)}{Q_n(x)},$$
	for $n\geq 0$. 
	The unsimplified fraction $P_n(x)/Q_n(x)$ is called the
$n$-th {\it	convergent} of $\Stiel(x;\mathbf{u})$.

Unlike continued fractions, every formal power series in $K[[x]]$ can not be
expanded as a Stieltjes continued fraction.

\subsection{Conjectures}
We put forward the following conjectures concerning the Thue-Morse 
and period-doubling continued
fractions and Stieltjes continued fractions. 

\begin{Conjecture}\label{conj:tmncf}
Let $a,b$ be two distinct elements from
	$\mathbb{F}_2[z]\backslash\mathbb{F}_2$. 
	Let $\mathbf{u}(z)$ be the $(a,b)$-Thue-Morse sequence. 
	The continued fraction $\CF(\mathbf{u}(z))$
	is algebraic of degree $4$ over $\mathbb{F}_2(z)$.
\end{Conjecture}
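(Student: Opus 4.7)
The plan follows the Guess'n'Prove methodology announced in the introduction. Write $F:=\CF(\mathbf{u}(z))$ and $\bar F:=\CF(\bar{\mathbf{u}}(z))$, where $\bar{\mathbf{u}}$ is obtained from $\mathbf{u}$ by swapping $a\leftrightarrow b$.

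\emph{Pair matrices.} Because the Thue-Morse sequence satisfies $u_{2k}=u_k$ and $u_{2k+1}=\bar u_k$, every consecutive pair $(u_{2k},u_{2k+1})$ equals $(a,b)$ if $u_k=a$ and $(b,a)$ otherwise. Collapsing the two CF transfer matrices in each case gives
\[
A:=\begin{pmatrix}ab+1&b\\a&1\end{pmatrix},\qquad B:=\begin{pmatrix}ab+1&a\\b&1\end{pmatrix},\qquad \det A=\det B=1.
\]
Writing $X_k$ (resp.\ $Y_k$) for the product of the first $2^k$ pair matrices indexed by the length-$2^k$ prefix of $\mathbf{u}$ (resp.\ $\bar{\mathbf{u}}$), the substitution $a\mapsto ab,\ b\mapsto ba$ translates into the key noncommutative recurrence
\[
X_{k+1}=Y_kX_k,\qquad Y_{k+1}=X_kY_k,\qquad X_0=A,\ Y_0=B,
\]
and $F,\bar F$ are the limits in $\mathbb{F}_2((1/z))$ of the ratios $X_k[1,2]/X_k[1,1]$ and $Y_k[1,2]/Y_k[1,1]$.

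\emph{Guess and prove.} Running the recurrence symbolically in $a,b$ for $k\le 5$, extract the convergents $P_n/Q_n$ up to $n=2^{k+1}-1$ and use linear algebra to locate a polynomial $\Phi(X)\in\mathbb{F}_2(z)[a,b][X]$ of degree $4$ for which $Q_n^4\,\Phi(P_n/Q_n)$ vanishes modulo a sufficiently large power of $1/z$; this pins down the conjectural minimal polynomial. For the proof, extract from the rescaled recurrence a finite algebraic system among $F,\bar F$ and the auxiliary tail quantity $G:=\lim X_k[2,1]/X_k[1,1]$: the identities $\det X_k=1$ and $X_{k+1}=Y_kX_k$ become, after renormalising by $X_k[1,1]$ and passing to the limit, polynomial relations in $(F,\bar F,G)$ over $\mathbb{F}_2(z)(a,b)$. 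Eliminating $\bar F$ and $G$ by resultants then yields a polynomial of degree $4$ in $F$ which should coincide with $\Phi$. The characteristic-$2$ Frobenius identity $F(z;a,b)^2=F(z;a^2,b^2)$, a consequence of $(u+v)^2=u^2+v^2$, is a useful auxiliary in the elimination. To conclude the degree is \emph{exactly} $4$ one verifies irreducibility of $\Phi$ over $\mathbb{F}_2(z)$: $F\notin\mathbb{F}_2(z)$ because the CF does not terminate, and $F$ is not quadratic because a quadratic irrational over $\mathbb{F}_2(z)$ has eventually periodic partial quotients, contradicting the aperiodicity of $\mathbf{u}$.

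\emph{Main obstacle.} The delicate step is Step $3$. The leading-order matrix identity $\hat X=\hat Y\hat X$ satisfied by the rank-$1$ limits $\hat X,\hat Y$ only yields $G=\bar G$ by itself, so one must extract the subleading $\det=1$ correction and track the ratio $G_k$ carefully, and justify uniform convergence in $\mathbb{F}_2((1/z))$. A further subtlety — presumably the reason the authors settle for special cases — is that the polynomial obtained generically in $a,b$ must remain irreducible after specialising $(a,b)$ to concrete non-constant polynomials in $\mathbb{F}_2[z]$, a property that should hold generically but whose verification for all admissible pairs seems to require a case analysis.
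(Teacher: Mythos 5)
There is a genuine gap --- and it is worth stressing first that the statement you are trying to prove is labelled a \emph{Conjecture} in the paper and remains open there: the authors only \emph{verify} it, pair by pair, for all $(a,b)$ with $\deg a+\deg b\leq 7$, via a computer-generated proof whose lemmas depend on the specific pair (Theorem \ref{th:ab} is the worked example for $(a,b)=(z,z^2+z+1)$). Your first two steps do match the paper's machinery: the recurrence $X_{k+1}=Y_kX_k$, $Y_{k+1}=X_kY_k$ is exactly the paper's $M_{n+1}=W_nM_n$, $W_{n+1}=M_nW_n$, and the candidate quartic is found by Pad\'e--Hermite approximation in both cases. But your ``prove'' step is precisely where no argument is known, and you have correctly located the hole yourself without filling it. The limits of the (suitably rescaled) matrices have determinant $0$, so the fixed-point relations $M^e=W^o M^o$, $M^o=W^e M^e$, etc.\ are relations among rank-one matrices; together with $\det X_k=1$ they do \emph{not} close up into a nondegenerate polynomial system in $(F,\bar F,G)$ from which one could eliminate. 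The ``subleading correction'' you defer to is the entire content of the problem.

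The paper's actual mechanism is different and inherently non-uniform in $(a,b)$: it \emph{guesses} an annihilating polynomial for each of the sixteen matrix entries of the four limit matrices $M^e,M^o,W^e,W^o$ (after the substitution $x=1/z$ and rescaling by $x^{\deg}$, which makes each entry converge in $\mathbb{F}_2[[x]]$ --- your unrescaled $X_k[i,j]$ do not converge in $\mathbb{F}_2((1/z))$, only their ratios do), \emph{defines} the limits as the algebraic series these polynomials determine, and then proves convergence by induction using self-similarity identities for the truncations $M^e[:\!6u]$, etc.\ (Lemmas \ref{lem:tmncf2} and \ref{lem:tmncf1}). Those identities are in turn verified by building the $2$-automaton of each entry and reducing infinitely many congruences to finitely many checks on states. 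None of this is available without first having the correct guessed polynomials for the given $(a,b)$, which is why the result is only established for finitely many pairs. So your proposal is a plausible plan whose decisive step is missing; as written it does not prove the conjecture, and no proof of the general statement exists in the paper either. (Two smaller points: your appeal to Lagrange-type periodicity to exclude degree $2$ is fine over a finite base field, but it is only needed if the guessed quartic were reducible; and irreducibility must indeed be checked per pair, as the paper does.)
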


\begin{Conjecture}\label{conj:tms}
Let $k\geq 2$ be an integer. Let $a,b$ be two distinct elements from 
$\mathbb{F}_{2^k}^\times$. Let $\mathbf{u}$ be the $(a,b)$-Thue-Morse sequence.
	The Stieltjes continued fraction $\Stiel(x;\mathbf{u})$ is algebraic
	over $\mathbb{F}_2^k(x)$. Its minimal polynomial is
		$$p_0(x)+p_1(x)y+p_2(x)y^2+p_4(x)y^4,$$
		where
		\begin{align*}
			p_0(x)&=(a^{2} b^{4} + b^{6})/{a^{4}} x^{2} + {b^{5}}/(a^{5} + a^{4} b),\\
			p_1(x)&=((ab^4 + b^5)/a^5)x + b^4/a^5,\\
			p_2(x)&=b^4/a^5x + b^4/(a^6 + a^5b),\\
			p_4(x)&=(b^4/(a^6 + a^5b))x^2.
		\end{align*}
\end{Conjecture}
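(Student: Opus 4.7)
The plan is to verify that $\Stiel(x;\mathbf{u})$ satisfies the proposed quartic
$P(y,x) = p_0(x) + p_1(x)y + p_2(x)y^2 + p_4(x)y^4$
by combining the $2$-automatic structure of the Thue-Morse sequence with a Hensel-type uniqueness principle. In characteristic $2$, the formal derivative $\partial P/\partial y$ reduces to $p_1(x)$, and one computes $p_1(0) = b^4/a^5 \neq 0$; together with the direct verification $P(a,0) = 0$, Hensel's lemma supplies a unique formal power series root $y(x) \in \mathbb{F}_{2^k}[[x]]$ with $y(0) = a$. Since $\Stiel(0;\mathbf{u}) = u_0 = a$, it therefore suffices to show $P(\Stiel(x;\mathbf{u}), x) = 0$.

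To obtain a functional equation for $\Stiel(x;\mathbf{u})$, I would exploit the defining self-similarity of the Thue-Morse word: the prefix of length $2^{n+1}$ factors as the prefix of length $2^n$ followed by the complement (with $a \leftrightarrow b$) of that same prefix. Writing $M_c = \begin{pmatrix} 1 & cx \\ 1 & 0 \end{pmatrix}$, $A_n = M_{u_{2^n-1}} \cdots M_{u_0}$, and $B_n = A_n|_{a\leftrightarrow b}$, this self-similarity translates into the clean matrix recursion $A_{n+1} = B_n A_n$ (and symmetrically $B_{n+1} = A_n B_n$). Pushing this through to the M\"obius action on convergents, the truncations $S_n = \Stiel_{2^n-1}(x;\mathbf{u})$ and $S'_n = \Stiel_{2^n-1}(x;\mathbf{u}')$ satisfy a coupled M\"obius-type recursion involving $S_n$, $S'_n$, and a small number of auxiliary ratios extracted from the other entries of $A_n, B_n$.

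Passing to the $x$-adic limit as $n \to \infty$ yields an algebraic system in $S = \Stiel(x;\mathbf{u})$, $S' = S|_{a\leftrightarrow b}$, and those auxiliary limits. Because the $2$-kernel of $\mathbf{u}$ consists of only the two elements $\mathbf{u}$ and $\mathbf{u}'$, I expect the system to close up after iterating the construction once or twice; two iterations are natural in view of the Frobenius-linear shape of $P$, whose only nonconstant monomials are $y$, $y^2$, $y^4$. Using the symmetry $S' = S|_{a\leftrightarrow b}$ to eliminate $S'$, the system should reduce to a single polynomial equation in $S$, which I would then verify matches $P(y,x)$ by symbolic coefficient comparison. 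Irreducibility of $P$ over $\mathbb{F}_{2^k}(x)$ follows by inspecting $P(y,0)$ and checking that $y = a$ is its only simple root in $\mathbb{F}_{2^k}$.

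The hardest step will be making the $x$-adic limit rigorous and isolating exactly which auxiliary ratios of entries of $A_n$ must be tracked for the recursion to close. The entries $\alpha_n, \beta_n, \gamma_n, \delta_n$ grow in degree as $n$ increases, so one must normalize (for instance, dividing by $\alpha_n$) and prove convergence of the relevant ratios; the finiteness of the $2$-kernel should ensure that only boundedly many ratios are needed. Once the functional equation is in place, matching it with the explicit $P(y,x)$ of the conjecture is a routine symbolic computation, in keeping with the paper's Guess'n'Prove methodology of isolating the algebraic relation by computer exploration and then verifying it against a provable recursion.
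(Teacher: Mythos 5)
You should first note that this statement is a \emph{conjecture} in the paper: the authors do not prove it in general, but only verify it computationally for $a\in\mathbb{F}_{2^k}\setminus\{0,1\}$ with $k=2,3,4$, by guessing minimal polynomials for the entries of four limit matrices via Pad\'e--Hermite approximation and then reducing the required structural identities to finitely many checks on explicit automata. Your opening step (Hensel uniqueness: $\partial P/\partial y=p_1$, $p_1(0)=b^4/a^5\neq 0$, $P(a,0)=0$, and $\Stiel(0;\mathbf{u})=u_0=a$) is correct and is essentially how the paper also pins down the claimed root once an annihilating relation is known. Your matrix recursion $A_{n+1}=B_nA_n$, $B_{n+1}=A_nB_n$ is exactly the paper's starting point.

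The genuine gap is in the middle: you assert that passing to the limit in this recursion ``yields an algebraic system'' that ``should close up'' and ``should reduce to a single polynomial equation in $S$,'' but no mechanism is given for why this happens, and the paper's own analysis shows this is precisely where the difficulty lies. Two concrete problems. First, the sequence $A_n$ does \emph{not} converge: the even- and odd-indexed subsequences converge to different limits ($M^e\neq M^o$, $W^e\neq W^o$ in the paper's notation), so one must track four matrices, and the limiting relations are the coupled system $M^e=W^oM^o$, $M^o=W^eM^e$, $W^e=M^oW^o$, $W^o=M^eW^e$. Second, and more seriously, these relations are consistency conditions satisfied by the limits, not defining equations: they do not determine the sixteen entries (they are, for instance, preserved under common rescalings), so elimination from this system alone cannot produce the minimal polynomial of $S=M^e_{0,1}/(xM^e_{0,0})$. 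The paper circumvents this by independently \emph{guessing} an algebraic equation for each entry, defining the candidate limits as the unique power-series solutions, and then proving convergence $M_{2k}\to M^e$ etc.\ via explicit truncation identities (their Observations 3.2--3.3) whose verification requires computing a finite automaton for each algebraic series --- a computation that depends on the specific value of $a$ and is why the result remains a conjecture for general $k$. Until you supply the analogue of that step --- a proof that the auxiliary limits satisfy specific algebraic equations, uniformly in $a$ --- your plan does not close.
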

Let $\mathbf{v}$ be the $(a/b,1)$-Thue-Morse sequence, then 
$$\Stiel(x;\mathbf{u})=b\cdot \Stiel(bx;\mathbf{v}).$$ 
Therefore conjecture \ref{conj:tms} admits the following equivalent form:

\begin{manualtheorem}{\ref{conj:tms}a}\label{conj:tmsa}
Let $k\geq 2$ be an integer. Let $a$ an elements from $\mathbb{F}_{2^k}^\times$ 
	distinct from $1$. Let $\mathbf{u}$ be the $(a,1)$-Thue-Morse sequence.
	The Stieltjes continued fraction $\Stiel(x;\mathbf{u})$ is algebraic
	over $\mathbb{F}_2^k(x)$. Its minimal polynomial is
		$$p_0(x)+p_1(x)y+p_2(x)y^2+p_4(x)y^4,$$
		where
		\begin{align*}
			p_0(x)&=(a^2 + 1)/a^4)x^2 + 1/(a^5 + a^4),\\
			p_1(x)&=((a + 1)/a^5)x + 1/a^5,\\
			p_2(x)&=1/a^5x + 1/(a^6 + a^5),\\
			p_4(x)&=(1/(a^6 + a^5))x^2.
		\end{align*}
\end{manualtheorem}

Or still
\begin{manualtheorem}{\ref{conj:tms}b}\label{conj:tmsb}
	We regard $a$ as a formal variable. Let $\mathbf{u}$ be the $(a,1)$-Thue-Morse
	sequence. Then 
		the Stieljtes continued fraction 
		$\Stiel(x;\mathbf{u})\in \mathbb{F}_2(a)[[x]]$ 
		is algebraic over $\mathbb{F}_2(a)(x)$.
		Its minimal polynomial is
		$$p_0(x)+p_1(x)y+p_2(x)y^2+p_4(x)y^4,$$
		where
		\begin{align*}
			p_0(x)&=(a^2 + 1)/a^4)x^2 + 1/(a^5 + a^4),\\
			p_1(x)&=((a + 1)/a^5)x + 1/a^5,\\
			p_2(x)&=1/a^5x + 1/(a^6 + a^5),\\
			p_4(x)&=(1/(a^6 + a^5))x^2.
		\end{align*}
\end{manualtheorem}

It is clear that conjecture \ref{conj:tmsb} implies conjecture \ref{conj:tmsa},
noticing that the only roots of the denominators of the coefficients of
$p_j(x)$, $j=0,1,2,4$, are $0$ and $1$.
On the other hand, if conjecture \ref{conj:tmsb} does not hold, then
$$0\neq p_0(x)+p_1(x)\Stiel(x;\mathbf{u})+p_2(x)\Stiel(x;\mathbf{u})^2+p_4(x)\Stiel(x;\mathbf{u})^4=:\sum_{n=0}^\infty
c_n(a) x^n,$$
and there exists an $n\in\mathbb{N}$ for which $c_n(a)\in \mathbb{F}_2(a)$
is not the zero. Necessarily there exists a $k\geq 2$ and an element 
$u\in \mathbb{F}_{2^k}\backslash\{0,1\}$ that is not a root of the 
numerator of $c_n(a)$, and consequently conjecture \ref{conj:tmsa} does not hold for $a=u$.
 
 \medskip
Based on our calculation, we believe that the period-doubling continued 
fractions are also algebraic. However, the period-doubling Stieltjes continued
fractions seem to be transcendental.

\begin{Conjecture}\label{conj:pdncf}
Let $a,b$ be two distinct elements from $\mathbb{F}_2[z]\backslash\mathbb{F}_2$. 
	Let $\mathbf{u}(z)$ be the $(a,b)$-period-doubling sequence. 
	The continued fraction $\CF(\mathbf{u}(z))$
	is algebraic over $\mathbb{F}_2(z)$.
\end{Conjecture}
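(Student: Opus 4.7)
The plan is to adapt the Guess'n'Prove strategy advertised in the abstract and already used by the authors in \cite{hh2}. First, a computer algebra search will produce a candidate minimal polynomial: compute $\CF_N(\mathbf{u}(z))$ for a large truncation index $N$, expand it as a Laurent series in $1/z$ over $\mathbb{F}_2(a,b)$, and apply a Padé/extended Euclidean approximation in $\mathbb{F}_2(a,b,z)[y]$ to search for an annihilating polynomial. Following the shape of Conjecture \ref{conj:tmncf} and the characteristic-$2$ Frobenius, I would restrict the search to polynomials in which only $2$-power exponents of $y$ occur.

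Second, functional equations are to be extracted from the automaton structure. The substitution $\sigma:\,a\mapsto ab,\,b\mapsto aa$ gives $u_{2n}=a$ for all $n$ and identifies $(u_{2n+1})_{n\geq 0}$ with the $(b,a)$-period-doubling sequence. Writing $F:=\CF(\mathbf{u}(z))$ and $G:=\CF(\mathbf{v}(z))$ where $\mathbf{v}$ is the $(b,a)$-period-doubling sequence, the contraction identity
\[
\cfrac{1}{a+\cfrac{1}{c+X}}\;=\;\cfrac{c+X}{1+ac+aX}
\]
applied to the consecutive pairs $(u_{2n},u_{2n+1})=(a,v_n)$ rewrites $F$ as a homography in an auxiliary Stieltjes-type continued fraction built from the letters of $\mathbf{v}$; a symmetric treatment of $G$ produces a coupled rational system in $F$, $G$, and the single-position shifts of these continued fractions.

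Third, the loop must be closed. The shifts that appear are, after at most one further contraction, again period-doubling continued fractions of $\mathbf{u}$ or $\mathbf{v}$, so a second iteration should yield a finite system of polynomial equations in $F,G$ and (via the characteristic-$2$ Frobenius acting on sub-series of the kernel) in $F^2,G^2$ over $\mathbb{F}_2(a,b,z)$. Successive resultants would eliminate $G$, $F^2$, and $G^2$, leaving a single polynomial relation in $F$ of degree a power of $2$; comparison with the guessed polynomial from the first step would simultaneously certify the guess and establish algebraicity.

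The principal obstacle is that contraction and shifting do not stabilise the class of pure period-doubling continued fractions but instead produce homographies $(AF+B)/(CF+D)$ whose entries involve an \emph{a priori} growing family of kernel-shifted continued fractions. Success hinges on identifying a finite invariant family, naturally indexed by the states of the period-doubling automaton, on which contraction acts as an endomorphism; once the right invariant family is pinned down, the elimination and verification are essentially mechanical. By analogy with Conjecture \ref{conj:tmncf}, the final minimal polynomial is expected to be of degree $4$.
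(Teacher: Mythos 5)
First, a point of framing: the statement you are proving is stated in the paper only as a \emph{conjecture}; the paper itself does not prove it in general, but only verifies it for the two pairs $(a,b)=(z^2,z)$ and $(a,b)=(z^3,z^2+z+1)$ in Section~\ref{section:pd}. Your proposal is likewise only a strategy sketch, and it contains a genuine gap at exactly the point where all the difficulty lives. Your second and third steps rest on the hope that parity decimation plus the contraction identity turns $\CF(\mathbf{u}(z))$ into a homography in the continued fraction of the odd-indexed subsequence, yielding a finite coupled system in $F$ and $G$. This does not happen: contracting the consecutive pairs $(u_{2n},u_{2n+1})=(a,v_n)$ produces a continued-fraction-like object whose partial quotients are rational functions mixing $a$ and $v_n$, not the continued fraction $\CF(\mathbf{v}(z))$ of the derived sequence, so the class of ``period-doubling continued fractions'' is not stable under your decimation and the loop never closes. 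You acknowledge this yourself (``the principal obstacle is that contraction and shifting do not stabilise the class\dots''), but the ``finite invariant family'' whose existence you defer to is precisely the content of the proof, not a mechanical afterthought.

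The paper sidesteps this obstruction by never decimating the continued fraction at all. It works with the $2\times 2$ matrix products of partial-quotient matrices over the substitution prefixes $\sigma^n(a)$ and $\sigma^n(b)$, for which concatenation of words becomes matrix multiplication and the recursion closes \emph{exactly}: $A_{n+1}=B_nA_n$ and $B_{n+1}=A_nA_n$ (Section~\ref{section:pd}). The finite invariant family is then the four limit matrices $A^e,A^o,B^e,B^o$ (even/odd indices), whose sixteen components are \emph{defined} as roots of guessed polynomials; Lemmas~\ref{lem:0}--\ref{lem:2} (resp.\ \ref{lem:00}--\ref{lem:3}) prove the multiplicative relations among them by resultants, prove self-similarity of their truncations by reducing infinitely many coefficient identities to finitely many checks on the states of the automata attached to each component, and then show by induction that $A_n,B_n$ are explicit truncations of the limits. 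The convergent quotient $A^e_{0,1}/A^e_{0,0}$ then gives algebraicity. If you want to salvage your plan, replace the decimation/contraction step by this prefix-product formulation; the Pad\'e--Hermite guessing and resultant elimination in your first and third steps are then essentially what the paper does, but only on a case-by-case basis for fixed $(a,b)$ -- neither your sketch nor the paper establishes the conjecture for all pairs.
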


	\begin{Conjecture}\label{conj:pds}
  Let $J\in\mathbf{F}_4\backslash \{0,1\}$.
  Let $\mathbf{u}$ be the $(1,J)$-period-doubling sequence.
  The Stieltjes continued fraction $\Stiel(x;\mathbf{u})$
  is transcendental over $\mathbb{F}_2(z)$.
\end{Conjecture}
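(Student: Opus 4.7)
The plan is to prove transcendence by contradiction, reducing the problem via Christol's theorem to a statement of non-automaticity for the coefficient sequence. Write $\Stiel(x;\mathbf{u})=\sum_{n\ge 0}c_n\,x^n$; since $\mathbf{u}$ takes values in $\{1,J\}\subset\mathbb{F}_4^\times$, each $c_n$ lies in $\mathbb{F}_4$. If the series were algebraic over $\mathbb{F}_4(x)$ (equivalently, over $\mathbb{F}_2(x)$, since $\mathbb{F}_4/\mathbb{F}_2$ is algebraic), then Christol's theorem applied with $q=4$ would force $(c_n)$ to be $4$-automatic, hence $2$-automatic by the equivalence recalled in Section~\ref{prel}, so that its $2$-kernel would be finite. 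The objective is then to exhibit infinitely many pairwise distinct elements in this $2$-kernel.

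First I would derive a workable expression for $c_n$ via the identity $\Stiel(x;\mathbf{u})=u_0/(1+x\,\Stiel(x;\Lambda_1\mathbf{u}))$ combined with the characteristic-$2$ expansion $(1+y)^{-1}=\sum_{m\ge 0}y^m$. Iterating this modulo $x^N$ gives the standard interpretation of $c_n$ as a weighted sum over Motzkin-type lattice paths of length $n$ with step-weights drawn from the partial quotients $u_j\in\{1,J\}$; alternatively one can work directly with the convergents $(P_n,Q_n)$ from the matrix recursion in the Stieltjes subsection. Either way, each $c_n$ is a polynomial in $J$ that becomes an element of $\mathbb{F}_4$ after reduction modulo $J^2+J+1$, and it is useful to control these coefficients first in $\mathbb{F}_2(a)$ with $J$ viewed as a formal variable, in the spirit of Conjecture~\ref{conj:tmsb}.

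Second, I would exploit the self-similarity of the period-doubling sequence -- namely $p_{2n}=1$ and $p_{2n+1}$ determined by $p_n$ -- to rewrite $\Lambda_0^{(2)}c$ and $\Lambda_1^{(2)}c$ as weighted sums involving shifted sequences $\Lambda_j^{(2^e)}\mathbf{p}$. Iterating produces a recursive system of relations among the elements of the $2$-kernel of $(c_n)$; if this system were to close at some finite stage, the $2$-kernel would be finite and the series algebraic. Accordingly I would look for a candidate infinite family of subsequences, indexed by positions aligned with the period-doubling substitution scale, and try to show that these subsequences are pairwise distinct by tracking an unbounded complexity invariant -- for instance the number of elementary blocks needed to describe an initial segment, or the degree in $J$ of a natural representation before reduction modulo $J^2+J+1$.

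The principal obstacle lies precisely in this last step. For the algebraicity conjectures~\ref{conj:tmncf}, \ref{conj:tms}, and~\ref{conj:pdncf}, the Guess'n'Prove method succeeds because a hypothetical minimal polynomial provides a \emph{finite} certificate that can be verified by checking finitely many identities among subsequences; no analogous certificate exists for transcendence, where one must rule out \emph{every} possible finite closure. A plausible route is to apply a characteristic-$2$ transcendence criterion of Mahler--Osgood--Voloch type to the functional system governing the shifted Stieltjes fractions $\Stiel(x;\Lambda_1^k\mathbf{u})$; another is a purely combinatorial argument extracting an unbounded invariant from the Motzkin-path expansion of $c_n$. Either route requires an ingredient strictly beyond the Guess'n'Prove framework used for the companion algebraicity conjectures, which is why the statement is presented only as a conjecture.
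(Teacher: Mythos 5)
The statement you are addressing is left as an open conjecture in the paper: the authors offer no proof of it, remarking only that their computations suggest transcendence, in contrast with the algebraicity they conjecture (and partially verify) for the Thue--Morse Stieltjes fractions and the period-doubling ordinary continued fractions. Your reduction via Christol's theorem is sound as far as it goes: algebraicity of $\Stiel(x;\mathbf{u})$ over $\mathbb{F}_4(x)$ would force the coefficient sequence $(c_n)$ to be $4$-automatic, hence $2$-automatic, hence to have finite $2$-kernel, so transcendence is equivalent to exhibiting infinitely many pairwise distinct elements of that kernel. This is the standard and correct first move.

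However, the proposal contains a genuine gap at precisely the decisive step, and you acknowledge it yourself: you never produce the infinite family of distinct kernel elements, nor do you identify a concrete unbounded invariant (degree in $J$ before reduction, block complexity, or otherwise) and prove that it actually grows along some explicit sequence of kernel elements. Saying you ``would look for a candidate infinite family'' and ``try to show that these subsequences are pairwise distinct'' is a research plan, not an argument; the same is true of the appeal to an unspecified Mahler--Osgood--Voloch-type criterion, which would require verifying hypotheses (e.g.\ a suitable functional equation and an irrationality/degree condition) that you have not formulated, let alone checked. Since the paper itself proves nothing here, there is no proof to compare yours against favorably or unfavorably --- but as submitted, your text does not establish the statement, and the conjecture remains open.
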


\subsection{Main results}
  
	We developped a method for checking conjecture \ref{conj:tmncf} and
	\ref{conj:tmsa} and implemented it. 
Using this method, we checked that conjecture \ref{conj:tmncf} holds for all pairs $(a,b)$ of 
	elements from $\mathbb{F}_2[x]\backslash \mathbb{F}_2$ 
	such that $\deg a+ \deg b \leq 7$,
	and that conjecture \ref{conj:tmsa} holds for all 
	$a\in \mathbb{F}_{2^k}\backslash\{0,1\}$ for $k=2,3,4$.

	For the verification of conjectures \ref{conj:tmncf} and \ref{conj:tmsa},
	we use the Guess'n'Prove method. 
	For conjecture \ref{conj:tmncf}, our program takes
	the pair $(a,b)$ as input, and, for the $(a,b)$-Thue-Morse sequence 
	$\mathbf{u}$, uses the Derksen algorithm for Pad\'e-Hermite approximants
	\cite{Bostan2017S} to guess the minimal polynomial of $\CF(\mathbf{u})(z)$.
	To prove that the guess is correct, it then  guesses and proves several lemmas, 
	whose forms depend on the choice of $(a,b)$, that would lead to the final
	result.
	In Section \ref{section:tmncf} we illustrate our method with an example 
	of computer generated proof. 
	The proofs for the other pairs that we have tested can be found 
	on the personal web page of the authors \footnote{\;
\texttt{http://irma.math.unistra.fr/\~{}guoniu/frconj/}}.

	For conjecture \ref{conj:tmsa}, the situation is similar, except that 
	we choose to regard $a$ as a formal variable whenever we can. In this way
	we prepare a common part for all $a$, and to prove that conjecture 
	\ref{conj:tmsa} holds for a certain $a$, we only need to fill in the rest
	of the proof for this specific $a$.

	In the proofs, we exploit the structure of the automata that generate
	the algebraic series in question. For this, we need to first obtain
	a $k$-automaton of an algebraic series from an annihilating polynomial of it.
In Section \ref{sec:eq2kern} we explain this part of our program. The algorithm
is based on the proof of theorem 1 of \cite{Christol1980KMFR}. 

\medskip

Our method for checking conjecture \ref{conj:tmncf} can be adpated for	the 
verification of conjecture \ref{conj:pdncf}. We give two examples in 
Section \ref{section:pd}.

	\section{Thue-Morse Continued Fraction}\label{section:tmncf}
	Our program tests conjecture \ref{conj:tmncf} for a given
	 pair of distinct elements $(a,b)$ from $\F2[x]\backslash \F2$.
	We have checked that the conjecture holds in the case
	where $\deg a+ \deg b \leq 7$.

	The following is an example of proof that conjecture \ref{conj:tmncf}
	holds for  $(a,b)=(z, z^2 + z + 1)$. Both the statement of the theorem
	and its proof are generated automatically by our program.
	The exact statement of theorem \ref{th:ab}, lemma \ref{lem:tmncf2} and 
	\ref{lem:tmncf1} depends on the choice of $(a,b)$.

\subsection{Statement of the theorem for $(a,b)=(z, z^2 + z + 1)$}
\begin{Theorem}\label{th:ab}
	Let $(a,b)=(z, z^2 + z + 1)\in (\F2[z]\backslash \F2)^2$. Let $\mathbf{t}$
	be the $(a,b)$-Thue-Morse sequence, and $\bar{\mathbf{t}}$, the 
	$(b,a)$-Thue-Morse sequence.
	The two power series $\CF(\mathbf{t}(z))$ and $\CF(\bar{\mathbf{t}}(z))$  are 
	algebraic over ${\mathbb F}_2(z)$, with minimal polynomials of the form
  \begin{align*}
		p_4(z)y^4+p_3(z)y^3+p_2(z)y^2+p_1(z)y+p_0(z)=0.
  \end{align*}
 For $\CF(\mathbf{t}(z))$
\begin{align*}
p_{0}(z)&=z^9 +z^7 +z^6 +z^5 +z^4 +z +1 ,\\
p_{1}(z)&=z^{11} +z^{10} +z^8 +z^6 +z^5 +z^3 +z^2 +z ,\\
p_{2}(z)&=z^{12} +z^{10} +z^2 ,\\
p_{3}(z)&=z^{11} +z^{10} +z^8 +z^6 +z^5 +z^3 +z^2 +z ,\\
p_{4}(z)&=z^{10} +z^9 +z^7 +z^6 +z^5 +z^2 +z ,
\end{align*}
	and for $\CF(\bar{\mathbf{t}}(z))$
\begin{align*}
p_{0}(z)&=z^9 +z^8 +z^7 +z^6 +z^5 +z^4 +z ,\\
p_{1}(z)&=z^{11} +z^{10} +z^8 +z^6 +z^5 +z^3 +z^2 +z ,\\
p_{2}(z)&=z^{12} +z^{10} +z^2 ,\\
p_{3}(z)&=z^{11} +z^{10} +z^8 +z^6 +z^5 +z^3 +z^2 +z ,\\
p_{4}(z)&=z^{10} +z^9 +z^8 +z^7 +z^6 +z^5 +z^2 +z +1 .
\end{align*}
\end{Theorem}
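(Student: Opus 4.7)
The plan is to follow the Guess'n'Prove strategy outlined in the introduction. First, using the matrix recursion \eqref{eq:pq} specialized to partial quotients drawn from the $(a,b)$-Thue-Morse sequence with $(a,b) = (z, z^2+z+1)$, I would compute several hundred Laurent coefficients of $F := \CF(\mathbf{t}(z))$ and $\bar F := \CF(\bar{\mathbf{t}}(z))$ as elements of $\mathbb{F}_2((1/z))$. Feeding these into the Derksen algorithm for Pad\'e--Hermite approximants \cite{Bostan2017S}, one searches for a quintuple $(p_0,\dots,p_4) \in \mathbb{F}_2[z]^5$ such that $\sum_j p_j(z) F^j$ vanishes to very high order; the search should return precisely the quartic announced in the theorem, and an analogous run gives the companion polynomial for $\bar F$.

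Next, to upgrade the guess to a proof, I would exploit the substitutive structure of $\mathbf{t}$. Because $(t_{2n}, t_{2n+1}) = (t_n, \bar t_n)$, pairing consecutive partial quotients via the M\"obius identity
\[
\cfrac{1}{u + \cfrac{1}{v + X}} = \cfrac{v + X}{1 + uv + uX}
\]
re-expresses $F$ as a fraction-like object whose digits are again governed by $\mathbf{t}$, but with a two-letter rewriting rule (one transformation when $t_n = a$, another when $t_n = b$). Iterating this folding a few times, to a depth matching that of the guessed quartic, should produce rational identities of the form $F = R_1(z, F, \bar F)$ and $\bar F = R_2(z, F, \bar F)$ with $R_1, R_2 \in \mathbb{F}_2(z)(y_1, y_2)$ explicitly computable for the pair $(z, z^2+z+1)$. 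These are the content of the intermediate Lemmas~\ref{lem:tmncf2} and~\ref{lem:tmncf1} invoked later in the text.

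From the two identities I would then eliminate $\bar F$ by a resultant computation over $\mathbb{F}_2(z)[y_1, y_2]$, producing an annihilating polynomial of $F$ of degree at most four in $y$. Comparing with the guess closes the algebraicity claim, and minimality of the quartic is settled either by a direct factorization in $\mathbb{F}_2(z)[y]$ or by ruling out rational and quadratic candidates via the aperiodicity of the Thue-Morse sequence of partial quotients (a quadratic CF over a function field would force ultimate periodicity). The symmetric argument handles $\bar F$, with the roles of $a$ and $b$ swapped.

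The main obstacle is the second stage: the folding depth needed before the rational self-reference closes grows with the algebraic degree of $F$, and each level of folding substantially increases the polynomial sizes in $\mathbb{F}_2[z]$. The guessing phase is standard and the final elimination is mechanical, but carrying out the bookkeeping of the rational expressions $R_1$ and $R_2$ for the specific pair $(z, z^2+z+1)$ is what makes the proof pair-dependent, and this is presumably why both the theorem statement and its proof are generated by computer.
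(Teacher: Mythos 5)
Your first and last stages (guessing the quartic by Pad\'e--Hermite approximation, and the final resultant-plus-irreducibility argument) match the paper's. The gap is in the middle stage, which is where all the real work lies. You assert that iterating the pairing identity ``a few times'' yields rational self-referential identities $F=R_1(z,F,\bar F)$ and $\bar F=R_2(z,F,\bar F)$, and you identify these with Lemmas~\ref{lem:tmncf2} and~\ref{lem:tmncf1}. Neither claim holds. Folding the continued fraction in pairs replaces the two M\"obius maps $X\mapsto 1/(a+X)$ and $X\mapsto 1/(b+X)$ by two new M\"obius maps (the matrices $M_1$ and $W_1$ of the paper), again driven by the Thue--Morse sequence; iterating $k$ times produces the maps $M_k,W_k$, still driven by $\mathbf{t}$. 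At no finite depth does this close up into a rational equation for the limit value $F$: the Thue--Morse word has no tail equal to $\mathbf{t}$ or $\bar{\mathbf{t}}$ (only the block $t_{2^n},\dots,t_{2^{n+1}-1}$ reproduces a prefix of $\bar{\mathbf{t}}$, not the whole tail), so $F$ is an infinite composition of M\"obius maps of unboundedly growing degree with no finite self-reference. You give no derivation of $R_1,R_2$, and the subsequent elimination step therefore has nothing to act on.

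What the paper does instead, and what is missing from your outline, is the rigorous identification of the limit of the convergents with an explicit algebraic function. It guesses candidate algebraic limits $M^e,M^o,W^e,W^o$ for the four subsequences of matrix products (each entry defined as the unique power-series root of an explicitly guessed polynomial with prescribed initial terms), proves the multiplicative relations $M^e=W^o\cdot M^o$, etc., by resultant computations (Lemma~\ref{lem:tmncf0}), proves self-similarity identities for the \emph{truncations} of these algebraic series by converting each one into a $2$-automaton via Christol's construction and checking finitely many states (Lemma~\ref{lem:tmncf2}), and finally shows by induction that the finite products $M_n,W_n$ coincide with explicit truncations of the guessed limits (Lemma~\ref{lem:tmncf1}), whence $\CF(\mathbf{t}(z))=M^e_{0,1}/M^e_{0,0}$ and the quartic follows by one more resultant. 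None of this is a rational functional equation for $F$. A minor point on minimality: your appeal to the function-field Lagrange theorem (valid over a finite base field) excludes degree $2$, and rationality is excluded since the expansion is infinite, but degree $3$ still has to be ruled out; an irreducibility check of the quartic suffices, as does the paper's device of dividing the guessed factor out of the resultant and testing the quotient against a long truncation of $F$.
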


\subsection{Proof}
Define
\begin{align*}
	M_n(x)&=
	x^{\deg\left(t_{2^n-1}\right)}
	\begin{pmatrix} t_{2^n-1}(1/x)&1\\1&0\end{pmatrix}
	x^{\deg\left(t_{2^n-2}\right)}
		\begin{pmatrix} t_{2^n-2}(1/x)&1\\1&0\end{pmatrix}\cdots\\
			&\quad x^{\deg\left(t_{0}\right)}
			\begin{pmatrix} t_0(1/x)&1\\1&0\end{pmatrix}
\end{align*}
and
\begin{align*}
	W_n(x)&=
	x^{\deg\left(\bar{t}_{2^n-1}\right)}
	\begin{pmatrix} \bar{t}_{2^n-1}(1/x)&1\\1&0\end{pmatrix}
	x^{\deg\left(\bar{t}_{2^n-2}\right)}
		\begin{pmatrix} \bar{t}_{2^n-2}(1/x)&1\\1&0\end{pmatrix}\cdots\\
			&\quad x^{\deg\left(\bar{t}_{0}\right)}
			\begin{pmatrix} \bar{t}_0(1/x)&1\\1&0\end{pmatrix}
\end{align*}
where $\mathbf{\bar{t}}$ is the $(b,a)$-Thue-Morse sequence. By the property
of the Thue-Morse sequence, we have for all $n\geq 0$
\begin{align*}
	M_{n+1}(x)&=W_n(x)\cdot M_n(x),  \\W_{n+1}(x)&=M_n(x)\cdot W_n(x).
\end{align*}

Define $x:=1/z$. For an non-zero polynomial $P(z)$, we define $\tilde{P}(x)$ 
to be  $P(1/x)$.  Then
\begin{equation*}
	\CF_n(\mathbf{t}(z))=
	\frac{P_n(z)}{Q_n(z)}=\frac{\tilde{P}_n(x)}{\tilde{Q}_n(x)}
	\in\mathbb{F}_2((x))=\mathbb{F}_2((1/z)).
\end{equation*}
Comparing the definition of $M_n(x)$ with definition \eqref{eq:pq}, we see that
\begin{align*}
	M_n(x)_{0,1}&= x^{d_n} \tilde{P}_{2^n-1}(x),\\
	M_n(x)_{0,0}&= x^{d_n} \tilde{Q}_{2^n-1}(x),
\end{align*}
for some positive integer $d_n$,
and
\begin{align}\label{eq:cm}
	\CF_{{2^{2n}-1}}(\mathbf{t}(z))
	=\frac{\tilde{P}_{2^{2n}-1}(x)}{\tilde{Q}_{2^{2n}-1}(x)}
	= \frac{M_{2n}(x)_{0,1}}{M_{2n}(x)_{0,0}}.
\end{align}

Our strategy is to first prove that 
both $M_{2n}(x)_{0,1}$ and $M_{2n}(x)_{0,0}$ converge to algebraic series in 
$\mathbb{F}_2[[x]]$, and then use their minimal polynomials to obtain that
of $ \CF_n(\mathbf{t}(z))$.

Actually, we will prove that for all $0\leq i,j\leq 1$ the four sequences
$(M_{2n}(x)_{i,j})_n$, $(M_{2n+1}(x)_{i,j})_n$, $(W_{2n}(x)_{i,j})_n$, and 
$(W_{2n+1}(x)_{i,j})_n$ converge to algebraic series in $\mathbb{F}_2[[x]]$.
For this purpose, we define four $2\times 2$ matrices $M^e, M^o, W^e, W^o$ as 
follows: For each $T\in\{M^e, M^o, W^e, W^o\}$ and $0\leq i,j\leq 1$,
$T_{i,j}$ is defined to be the unique solution in $\mathbb{F}_2[[x]]$ of
the polynomial $\phi(T,i,j)$ under certain initial conditions; the polynomials
$\phi(T,i,j)$ and initial conditions are given  in Subsection \ref{data:tmncf}. 
We will prove that these four matrices, whose components are algebraic
by definition, are the limits of 
$(M_{2n}(x))_n$, $(M_{2n+1}(x))_n$, $(W_{2n}(x))_n$, and $(W_{2n+1}(x))_n$.

Let us explain how the polynomials $\phi(T,i,j)$ and initial conditions 
are found, and why the solutions exist and are unique.
For $0\leq i,j\leq 1$, the the coefficients of the polynomial
$\phi(M^e,i,j)$ 
(resp. $\phi(M^o,i,j)$, $\phi(W^e,i,j)$, and $\phi(W^o,i,j)$) 
are the Pad\'e-Hermite approximants
of type 
$$(75,75,75,75,75)$$ 
of 
the vector 
$$(1, f^3, f^6, f^9, f^{12}),$$ 
where 
$f=M_{12,i,j}$ (resp. $M_{11,i,j}$, $W_{12,i,j}$, 
and $W_{11,i,j}$). See Chapter 7 of \cite{Bostan2017S} for a description
of the Derksen algorithm that is used here to find the Pad\'e-Hermite 
approximants.
We take the first eight terms of  $M_{12,i,j}$ (resp. $M_{11,i,j}$, 
$W_{12,i,j}$, and $W_{11,i,j}$) as the initial conditions for $\phi(M^e,i,j)$ 
(resp. $\phi(M^o,i,j)$, $\phi(W^e,i,j)$, and $\phi(W^o,i,j)$).
The following fact will be used to ensure that the solution exists and is 
unique:
let	$P(x,y)\in\mathbb{F}_2[x,y]$ and for each series
$f(x)=\sum_{n=0}^\infty a_n x^n\in \F2[[x]]$ denote the partial sum 
$\sum_{j=0}^{n-1}a_j x^j$ by $f_n(x)$ for $n\geq 0$. If
	for some $n\geq 0$ and $a_0, a_1, \ldots, a_{n-1}\in\F2$
	$P(x,\sum_{j=0}^{n-1}a_j x^j)=O(x^n)$
	and $Q(x,y):= P(x, \sum_{j=0}^{n-1}a_j x^j+x^ny)$ can be written as
	$x^m\sum_{j=0}^\infty q_j(x)y^j$ where $q_j(x)$ are polynomials for 
	$j\geq 0$, $q_1(0)=1$,
	and $q_j(0)=0$ for $j>1$, then there exists a unique solution 
	$f(x)\in \F2[[x]]$ of $P(x,f(x))=0$ that satisfies the initial condition
	$f_n(x)=\sum_{j=0}^{n-1}a_j x^j$.

\medskip

We state two lemmas concerning the four matrices $M^e, M^o, W^e, W^o$.
The first one is about relations between them;
the second, about the structure of the each matrix.

\begin{Lemma}\label{lem:tmncf0}
	We have
  \begin{align}	
		M^e&=W^o\cdot M^o,\label{eq:me}\\
		M^o&=W^e\cdot M^e,\label{eq:mo}\\
		W^e&=M^o\cdot W^o,\label{eq:we}\\
		W^o&=M^e\cdot W^e.\label{eq:wo}
  \end{align}	
	\end{Lemma}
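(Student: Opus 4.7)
The plan is to verify the four identities directly from the defining polynomial equations of $M^e, M^o, W^e, W^o$, using the uniqueness statement for series solutions quoted just before the lemma. Each entry $T_{i,j}$, for $T\in\{M^e,M^o,W^e,W^o\}$, is by definition the unique power-series solution of $\phi(T,i,j)(x,y)=0$ matching a prescribed eight-term initial expansion; any candidate meeting both conditions is therefore forced to coincide with $T_{i,j}$.

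Consider the first identity $M^e = W^o\cdot M^o$; the other three are strictly parallel. Fix $(i,j)\in\{0,1\}^2$ and form the candidate
\[
g_{i,j}(x) := W^o_{i,0}(x)\,M^o_{0,j}(x) + W^o_{i,1}(x)\,M^o_{1,j}(x).
\]
I would check two things. First, the eight-term truncation of $g_{i,j}$ matches the initial condition prescribed for $M^e_{i,j}$; this is a direct truncated-series multiplication from the known initial data of $W^o$ and $M^o$. Second, $g_{i,j}$ is annihilated by $\phi(M^e,i,j)$; this is a purely polynomial assertion in $\F2[x,y]$ which can be certified by iterated resultants of $\phi(W^o,i,k)$ and $\phi(M^o,k,j)$ for $k=0,1$, producing an annihilator $R_{i,j}(x,y)$ of $g_{i,j}$, together with a divisibility check $\phi(M^e,i,j)\mid R_{i,j}$. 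By uniqueness these two facts force $g_{i,j}=M^e_{i,j}$. Cycling this template through the triples $(W^e,M^e,M^o)$, $(M^o,W^o,W^e)$, and $(M^e,W^e,W^o)$ settles the remaining three identities \eqref{eq:mo}, \eqref{eq:we}, \eqref{eq:wo}.

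The main obstacle is the resultant step: each $\phi(T,i,j)$ has $y$-degree up to $12$ with $x$-coefficients of relatively high degree, so the iterated resultants can be sizable polynomials in $\F2[x,y]$ that must be factored or divisibility-tested with care. In the Guess'n'Prove framework of the paper this nevertheless reduces to a finite, fully algorithmic verification. A conceptually cleaner but more indirect route would be to first establish convergence of the even and odd subsequences of $(M_n)$ and $(W_n)$ in $\F2[[x]]$ to $M^e, M^o, W^e, W^o$ respectively --- presumably part of the later lemmas \ref{lem:tmncf1} and \ref{lem:tmncf2} --- and then take limits of the two recurrences $M_{n+1}=W_n M_n$ and $W_{n+1}=M_n W_n$ along even and odd indices; the four identities then drop out with no calculation. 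Either way, the lemma encodes the fact that the self-similarity of the substitution $s\colon a\mapsto ab,\ b\mapsto ba$ survives the passage to the algebraic limits.
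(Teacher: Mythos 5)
Your overall strategy is the same as the paper's: identify each entry of $W^o\cdot M^o$ as the unique power-series solution of the corresponding $\phi(M^e,i,j)$ with the prescribed eight initial terms, certifying annihilation through resultants. But there is a genuine gap at the certification step. From the two facts you propose to check --- that $R_{i,j}$ annihilates $g_{i,j}$ and that $\phi(M^e,i,j)$ divides $R_{i,j}$ --- it does not follow that $\phi(M^e,i,j)$ annihilates $g_{i,j}$. Writing $R_{i,j}=\phi(M^e,i,j)^{\mu}\,Q$ with $Q$ not divisible by $\phi(M^e,i,j)$, the identity $R_{i,j}(x,g_{i,j})=0$ in the integral domain $\mathbb{F}_2[[x]]$ only tells you that $\phi(M^e,i,j)(x,g_{i,j})=0$ \emph{or} $Q(x,g_{i,j})=0$; the series $g_{i,j}$ could perfectly well be a root of the complementary factor $Q$ and not of $\phi(M^e,i,j)$. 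Matching eight initial terms does not resolve this, since distinct power-series roots of $R_{i,j}$ may agree to order $8$. This is precisely the point the paper's proof takes care of: after isolating the irreducible factor and its multiplicity, it substitutes a truncation of the series to order $270$ (resp.\ $330$) into the complementary factor and observes that the valuation of the result is too small for that factor to be an annihilator; only then is the desired factor forced to vanish on the series, and the uniqueness argument can close. Your proof needs this additional non-annihilation check. (The paper also interposes a minimal-polynomial computation for each product $W^o_{i,k}M^o_{k,j}$ before taking the resultant for the sum; that is a size optimization rather than a logical necessity, so omitting it is harmless.)

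A second caveat: the ``conceptually cleaner'' alternative you sketch --- first proving convergence of $(M_{2n})$, $(M_{2n+1})$, $(W_{2n})$, $(W_{2n+1})$ to $M^e,M^o,W^e,W^o$ and then passing to the limit in $M_{n+1}=W_nM_n$ and $W_{n+1}=M_nW_n$ --- is circular in the paper's architecture: the convergence comes from Lemma \ref{lem:tmncf1}, whose proof already invokes the present lemma to rewrite $M^o[:\!6v]$ as $W^e[:\!6v]\cdot M^e[:\!6v]+x^{2v}W^e[:\!4v]\cdot M^e[:\!4v]$ modulo $x^{6v}$. That route is therefore not available without reorganizing the whole argument.
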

\begin{proof}
	We give the proof of the identity
	$$M^e_{0,0}=W^o_{0,0}M^o_{0,0}+W^o_{0,1}M^o_{1,0},$$
	the proofs of the others being similar. 

	First, we compute the minimal polynomials of $W^o_{0,0}M^o_{0,0}$ and
	$W^o_{0,1}M^o_{1,0}$.
	We know that
	$$P(x,y)=\Res_z\left(\phi(W^o,0,0)(x,z),\  z^{12}\cdot \phi(M^o, 0,0)(x,y/z)\right)$$
	is an annihilating polynomial of $W^o_{0,0}M^o_{0,0}$; here
	$Res_z$ means the resultant with respect to the variable $z$ (
	see Chapter 6 of \cite{Bostan2017S}). 
	We use Pad\'e-Hermite approximation
	to find a candidate for the minimal polynomial of $W^o_{0,0}M^o_{0,0}$, that 
	will be called $\phi_0(x,y)$. To prove that $\phi_0(x,y)$ is
	indeed the minimal polynomial, it suffices to prove that it is an
	irreducible factor of $P(x,y)$ of multiplicity $m$ and that 
	$Q(x,y):=P(x,y)/\phi_0(x,y)^m$ is not an annihilating polynomial of 
	$W^o_{0,0}M^o_{0,0}$. We verify the first point directly. For the second
	point, we truncate $W^o_{0,0}M^o_{0,0}$ to order $270$ 
	and substitute it
	for $y$ in $Q(x,y)$. We get a series of valuation less than $270$, which 
	proves that $Q(x,y)$ is not an annihilating polynomial of $W^o_{0,0}M^o_{0,0}$.
	We find the minimal polynomial $\phi_1(x,y)$ of $W^o_{0,1}M^o_{1,0}$
	in a similar way. 
	
	Now we prove that $\phi(M^e,0,0)$ is the minimal polynomial of 
$W^o_{0,0}M^o_{0,0}+W^o_{0,1}M^o_{1,0}$.
  We know that
	$$S(x,y)=\Res_z\left(\phi_0(x,z),\; \phi_1(x,y+z)\right)$$
	is an annihilating of $W^o_{0,0}M^o_{0,0}+W^o_{0,1}M^o_{1,0}$.
	We verify that $\phi(M^e,0,0)$ is an irreducible factor of $S(x,y)$ of 
	multiplicity $\mu$, and that $Q(x,y):=S(x,y)/\phi(M^e,0,0)^\mu$ is not
	an annihilating polynomial of $W^o_{0,0}M^o_{0,0}+W^o_{0,1}M^o_{1,0}$.
  To see the last point, we truncate 	$W^o_{0,0}M^o_{0,0}+W^o_{0,1}M^o_{1,0}$
	to order $330$ and substitute it for $y$ in $Q(x,y)$. We get a series of 
	valuation less than $330$, and therefore $Q(x,y)$ is not an annihilating
	polynomial of $W^o_{0,0}M^o_{0,0}+W^o_{0,1}M^o_{1,0}$.

Finally, the first $8$ terms of $M^e_{0,0}$ and 
$W^o_{0,0}M^o_{0,0}+W^o_{0,1}M^o_{1,0}$ coincide. As these first terms
	determine a unique solution of $\phi(M^e,0,0)$, we know that the two
	series are one and the same.
\end{proof}

Define
$$R^e=\begin{pmatrix}1&0\\
0&1\end{pmatrix} \qquad\text{and}\qquad R^o=\begin{pmatrix}1&0\\
0&1\end{pmatrix}.$$

\begin{Lemma}\label{lem:tmncf2}
	For all integers $k\geq 2$ and $u=2^{2k-1}$, the following identities hold.
	\begin{align*}
M^e[:\!6u]&=M^e[:\!3u] +x^uM^e[:\!2u] +x^uM^e[:\!3u] +x^{2u}M^e[:\!2u] +x^{2u}M^e[:\!3u]\\
& \quad +x^{3u}M^e[:\!2u] +(x^{3u}+x^{4u}+x^{5u})R^e,\\
W^e[:\!6u]&=W^e[:\!3u] +x^uW^e[:\!2u] +x^uW^e[:\!3u] +x^{2u}W^e[:\!2u] +x^{2u}W^e[:\!3u]\\
& \quad +x^{3u}W^e[:\!2u] +(x^{3u}+x^{4u}+x^{5u})R^e.
\end{align*}
	For all integers $k\geq 2$ and $u=2^{2k}$,
	\begin{align*}
M^o[:\!6u]&=M^o[:\!3u] +x^uM^o[:\!2u] +x^uM^o[:\!3u] +x^{2u}M^o[:\!2u] +x^{2u}M^o[:\!3u]\\
& \quad +x^{3u}M^o[:\!2u] +(x^{3u}+x^{4u}+x^{5u})R^o,\\
W^o[:\!6u]&=W^o[:\!3u] +x^uW^o[:\!2u] +x^uW^o[:\!3u] +x^{2u}W^o[:\!2u] +x^{2u}W^o[:\!3u]\\
& \quad +x^{3u}W^o[:\!2u] +(x^{3u}+x^{4u}+x^{5u})R^o.
\end{align*}
\end{Lemma}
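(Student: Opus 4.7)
The plan is to prove the four identities by simultaneous induction on $k$, treating $M^e$, $W^e$, $M^o$, $W^o$ in parallel, since they are coupled through the relations of Lemma \ref{lem:tmncf0}. The starting observation is that each of the four claims can be rewritten compactly as
\[
T[:\!6u] = (1 + x^u + x^{2u})\bigl(T[:\!3u] + x^u T[:\!2u] + x^{3u} R^e\bigr),
\qquad T \in \{M^e, W^e, M^o, W^o\},
\]
and that in characteristic two one has $1 + x^u + x^{2u} = (1 + x + x^2)^u$ whenever $u$ is a power of two. For the pair $(a,b) = (z, z^2+z+1)$, the polynomial $1 + x + x^2$ is precisely the $(0,0)$-entry of the renormalized matrix attached to the letter $b$, so the truncation identity can be read as a Frobenius-level shadow of the Thue-Morse substitution.

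For the base case $k = 2$ (so $u = 8$ for $M^e, W^e$ and $u = 16$ for $M^o, W^o$), each identity is a finite check: every entry $T_{i,j}$ is uniquely determined by the polynomial equation $\phi(T,i,j)$ together with the prescribed initial terms, so expanding each entry to order $6u$ from its polynomial equation and comparing both sides term by term in $\mathbb{F}_2[x]/(x^{6u})$ settles the claim.

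For the inductive step from $k$ to $k+1$ (i.e.\ from $u$ to $4u$), I would invoke Ore's theorem: every algebraic series in $\mathbb{F}_2[[x]]$ satisfies a linearized equation $\sum_s A_s(x)\, f^{2^s} = 0$, which via $f(x)^{2^s} = f(x^{2^s})$ becomes a Mahler-type recursion expressing $f(x)$ in terms of $f(x^4)$ after inverting the lowest-order coefficient. Applying this to each entry $T_{i,j}$ and using Lemma \ref{lem:tmncf0} to circulate the four matrices, the identity at level $k+1$ reduces modulo $x^{24u}$ to the identity at level $k$ substituted through $x \mapsto x^4$, together with an exact polynomial correction coming from the low-degree part of the Mahler equation.

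The main obstacle is carrying out this inductive step cleanly. The Mahler equations for the various entries have distinct polynomial coefficients; inverting the lowest-order coefficient of the Ore polynomial introduces denominators whose compatibility with truncations at the scales $6u$, $3u$ and $2u$ must be verified; and the roles of $M^e/W^e$ versus $M^o/W^o$ alternate with the parity of the level, so aligning the four inductive hypotheses under $x \mapsto x^4$ demands careful bookkeeping. This is exactly where the authors' Guess'n'Prove engine pays off: once the correct Mahler-type relation has been guessed for each entry, the truncation identity becomes a formal consequence that can be discharged by resultant computations and a finite initial-segment comparison.
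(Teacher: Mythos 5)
Your compact rewriting $T[:\!6u]=(1+x^u+x^{2u})\bigl(T[:\!3u]+x^uT[:\!2u]+x^{3u}R^e\bigr)$ is correct and the base case is unobjectionable, but the inductive step is where the argument breaks, and it breaks for a structural reason rather than a bookkeeping one. The identity at level $k$ is equivalent to a family of coefficient relations such as $T_{[w]_2}+T_{[10w]_2}+T_{[11w]_2}=0$ ranging over \emph{all} binary words $w$ of length $2k$ (resp.\ $2k+1$). Passing from $k$ to $k+1$ appends two arbitrary letters to $w$, so the positions involved at level $k+1$ are $4n$, $4n+1$, $4n+2$, $4n+3$ for the level-$k$ positions $n$ --- not just $4n$. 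A Mahler relation $f(x)=\sum_s B_s(x)f(x^{2^s})$ therefore does not reduce the level-$(k+1)$ relations for $f$ to the level-$k$ relations for $f$: it forces you to track the corresponding relations for all the sections $\Lambda_{j_1}\Lambda_{j_0}f$, i.e.\ for the whole $2$-kernel of $f$. Your inductive hypothesis, which is a statement about $f$ alone, is too weak to close the loop; and the ``exact polynomial correction'' you invoke is not polynomial, since normalizing the Ore relation introduces rational coefficients $B_s$ whose power-series expansions are infinite, so the low-degree part of the recursion contaminates every truncation window.

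The paper's proof confronts exactly this point. It converts each entry $T$ into a $2$-DFAO computed from its minimal polynomial, rewrites the truncation identities as conditions $\tau(A(s,\epsilon))+\tau(A(s,10))+\tau(A(s,11))=0$ (and the analogous ones) on the set $E_{2k}$ of states reachable by words of length $2k$, and then observes that the pair $(A(i,0^{2k}),E_{2k})$ is eventually periodic in $k$ --- here $(A(i,0^{24}),E_{24})=(A(i,0^{16}),E_{16})$ --- so that checking $2\le k\le 12$ suffices. Tracking reachable states is precisely the ``induction over the whole kernel'' that your single-series Mahler induction is missing; if you strengthen your hypothesis to assert the relations for every state reachable at depth $2k$, you have essentially rederived the paper's argument. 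Note also that Lemma \ref{lem:tmncf0} relates the four matrices multiplicatively and plays no role in the proof of this lemma, so it cannot substitute for the kernel bookkeeping; and a ``finite initial-segment comparison'' cannot by itself certify an identity quantified over all $k$ --- the eventual periodicity of the reachable-state sets is the ingredient that makes the verification finite.
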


\begin{proof}
To prove Lemma \ref{lem:tmncf2} we first construct an automaton for each 
sequence concerned, and then transform the conditions on infinitely many
$k$'s into finitely many conditions on the states of the automaton.
In the following, we will prove that for $T=M^o_{1,0}$, for all integer 
	$k\geq 2$ and $u=2^{2k}$,
\begin{align*}
T[:\!6u]&=T[:\!3u] +x^uT[:\!2u] +x^uT[:\!3u] +x^{2u}T[:\!2u] +x^{2u}T[:\!3u] +x^{3u}T[:\!2u] .
\end{align*}
The proofs of the other $15$ cases are similar.
We break down the above identity into $3$ parts:
		\begin{align*}
0&=x^{3u}T[:\!u] +x^uT[2u\!:\!3u] +T[3u\!:\!4u],\\
0&=x^{3u}T[u\!:\!2u] +x^{2u}T[2u\!:\!3u] +T[4u\!:\!5u],\\
0&=T[5u\!:\!6u],
\end{align*}
	which can be rewritten as 
		\begin{align}
0&=T[[w]_2] +T[[10w]_2] +T[[11w]_2],\label{labelp2l3}\\
0&=T[[1w]_2] +T[[10w]_2] +T[[100w]_2],\label{labelp2l4}\\
0&=T[[101w]_2],\label{labelp2l5}
\end{align}
	for all binary word $w$ of length $2k$ and $w\neq 0^{2k}$, and
		\begin{align}
0&=T[[w]_2] +T[[10w]_2] +T[[11w]_2],\label{labelp3l3}\\
0&=T[[1w]_2] +T[[10w]_2] +T[[100w]_2],\label{labelp3l4}\\
0&=T[[101w]_2],\label{labelp3l5}
\end{align}
	for $w=0^{2k}$.

First we calculate an $2$-automaton that generates $T$ from its minimal 
	polynomial and
	its first terms. This automaton
	has $124$ states; its transition function and output function can be found in
the annex. 
	Let $A(s,w)$ denote the state reached after reading $w$ from right to
	left starting from the state $s$, and $\tau$ the output function. 
	Define $$E_{2k}=\{A(i,w) : |w|=2k, w\neq 0^{2k}\}.$$
	Identities \eqref{labelp2l3} through \eqref{labelp3l5} can be written as
		\begin{align}
0&=\tau(A(s,\epsilon)) +\tau(A(s,10)) +\tau(A(s,11)),\label{labelp4l3}\\
0&=\tau(A(s,1)) +\tau(A(s,10)) +\tau(A(s,100)),\label{labelp4l4}\\
0&=\tau(A(s,101)),\label{labelp4l5}
\end{align}
	for all $s\in E_{2k}$, and 
		\begin{align}
0&=\tau(A(s,\epsilon)) +\tau(A(s,10)) +\tau(A(s,11)),\label{labelp5l3}\\
0&=\tau(A(s,1)) +\tau(A(s,10)) +\tau(A(s,100)),\label{labelp5l4}\\
0&=\tau(A(s,101)),\label{labelp5l5}
\end{align}
	for $s=A(i,0^{2k})$.
	We find that $(A(i,0^{24}),E_{24})=(A(i,0^{16}),E_{16})$, so that we only 
	have to verify that identities \eqref{labelp4l3} through \eqref{labelp5l5} hold for 
	$2\leq k\leq 12$, which turns out to be true. 
\end{proof}

In the following lemma, we express $M_{2k}$, $M_{2k+1}$, $W_{2k}$, and
$W_{2k+1}$ in terms of $M^e$, $M^o$, $W^e$, and $W^o$.
\begin{Lemma}\label{lem:tmncf1}
	For all integer $k\geq 2$, and $u=2^{2k-1}$,
		\begin{align*}
M_{2k}&=M^e[:\!3u] +x^uM^e[:\!2u] +x^{3u}R^e,\\
W_{2k}&=W^e[:\!3u] +x^uW^e[:\!2u] +x^{3u}R^e.
\end{align*}
	For all integer $k\geq 2$, and $u=2^{2k}$,
		\begin{align*}
M_{2k+1}&=M^o[:\!3u] +x^uM^o[:\!2u] +x^{3u}R^o,\\
W_{2k+1}&=W^o[:\!3u] +x^uW^o[:\!2u] +x^{3u}R^o.
\end{align*}
\end{Lemma}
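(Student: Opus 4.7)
The plan is to prove all four identities simultaneously by induction on $k$, using the matrix recurrences $M_{n+1}=W_nM_n$ and $W_{n+1}=M_nW_n$ together with Lemmas \ref{lem:tmncf0} and \ref{lem:tmncf2}. The base case $k=2$ is a direct verification: each of $M_4,W_4,M_5,W_5$ is an explicit product of finitely many $2\times 2$ matrices, so its polynomial entries can be computed and compared term by term against the initial segments of $M^e,W^e,M^o,W^o$, whose first several coefficients are pinned down by the initial conditions used to define the polynomials $\phi(T,i,j)$. (A consistency check: for $(a,b)=(z,z^2+z+1)$ one has $\sum_{j=0}^{2^n-1}\deg t_j = 3\cdot 2^{n-1}$, matching the predicted top degree $3u$ on the right-hand sides.)

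For the inductive step from $k$ to $k+1$, I would advance two indices at a time, doing one case in detail and the others by symmetry. Consider for instance $M_{2(k+1)}=W_{2k+1}M_{2k+1}$; set $v=2^{2k}$ and $w=2v=2^{2(k+1)-1}$. Substituting the induction hypothesis gives
\[
M_{2(k+1)} = \bigl(W^o[:\!3v]+x^v W^o[:\!2v]+x^{3v}R^o\bigr)\bigl(M^o[:\!3v]+x^v M^o[:\!2v]+x^{3v}R^o\bigr),
\]
a polynomial of degree exactly $6v=3w$ whose top coefficient is $x^{6v}R^e$, as required. It remains to identify the lower-degree content with $M^e[:\!3w]+x^w M^e[:\!2w]=M^e[:\!6v]+x^{2v}M^e[:\!4v]$. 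For this, Lemma \ref{lem:tmncf0} rewrites $M^e=W^oM^o$, so truncations of $M^e$ become truncated products of $W^o$ and $M^o$; Lemma \ref{lem:tmncf2}, applied at the appropriate scale, converts the ``tails'' $W^o-W^o[:\!3v]$ and $M^o-M^o[:\!3v]$ back into truncations of $W^o,M^o$ at scale $v$. The analogous calculations for $W_{2(k+1)}$, $M_{2k+3}$, $W_{2k+3}$ follow the same pattern with $M^e\leftrightarrow M^o$ and $W^e\leftrightarrow W^o$ interchanged.

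The main obstacle is the combinatorial bookkeeping of truncations. Since $(fg)[:\!N]\ne f[:\!N]\cdot g[:\!N]$, each of the nine cross-terms in the expansion of $W_{2k+1}M_{2k+1}$ contributes corrections at various scales. Lemma \ref{lem:tmncf2} is engineered precisely so that these corrections reassemble into the shape $M^e[:\!3w]+x^w M^e[:\!2w]+x^{3w}R^e$, but lining up the indices and powers of $x$ on both sides requires care. Once the ledger is written down correctly, the remaining verification reduces to a finite polynomial identity in $\F2[x]$, which is routine to check.
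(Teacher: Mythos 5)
Your proposal is correct and follows essentially the same route as the paper: induction on $k$ starting from a direct check at $k=2$, the recurrences $M_{n+1}=W_nM_n$ and $W_{n+1}=M_nW_n$, Lemma \ref{lem:tmncf0} to turn the limit matrices into truncated products, Lemma \ref{lem:tmncf2} to reassemble the cross-terms, and a finite formal verification at the end. The only substantive detail the paper adds to your ``ledger'' is how that verification is made routine: both sides share the top term $x^{6v}R$, so only congruence modulo $x^{6v}$ must be checked, and after substituting formal symbols $a_n,b_n,c$ for the blocks $W^e[nv\!:\!(n+1)v]/x^{nv}$, $M^e[nv\!:\!(n+1)v]/x^{nv}$, $R^e$, the check becomes a polynomial identity in $\mathbb{F}_2[a_1,\dots,a_6,b_1,\dots,b_6,c][X]$ rather than an identity in $\mathbb{F}_2[x]$ involving the unknown series segments.
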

\begin{proof}
Call the four identities in Lemma \ref{lem:tmncf1} also by the name
	$M_{2k}$, $W_{2k}$, $M_{2k+1}$, and $W_{2k+1}$. 
For $n=2$, the identities can be verified directly. For $n\geq 2$,	
	we claim that
	\begin{align*}
		M_{2k} \wedge W_{2k} &\Rightarrow M_{2k+1} \wedge W_{2k+1},\\
		M_{2k+1} \wedge W_{2k+1} &\Rightarrow M_{2k+2} \wedge W_{2k+2}.
	\end{align*}
	We give the proof of 
		$$M_{2k} \wedge W_{2k} \Rightarrow M_{2k+1},$$
	the proofs of the other ones being similar. Set $u=2^{2k}$ and $v=2^{2k-1}$.
	By definition and induction hypothesis, the left side of identity 
	$M_{2k+1}$ is equal to
	\begin{align}
W_{2k}M_{2k}&=\bigl( W^e[:\!3v] +x^vW^e[:\!2v] +x^{3u}R^e \bigr)\times\bigl( M^e[:\!3v] +x^vM^e[:\!2v]\nonumber\\
& \quad +x^{3u}R^e \bigr).\label{label:lhs}
\end{align}
	Call this expression $lhs$.
	Note that both sides of identity $M_{2k+1}$ have the same term of highest
	degree $x^{6v}R^o$. Therefore we only need to prove that their difference
	is $O(x^{6v})$. Using Lemma \ref{lem:tmncf0} it can be seen that the right 
side of identity $M_{2k+1}$  is congruent, modulo $x^{6v}$, to 
\begin{equation*}
W^e[:\!6v]M^e[:\!6v] +x^{2v}W^e[:\!4v]M^e[:\!4v].\label{}
\end{equation*}
For all $n\leq 6$, replace the occurrences of  $W^e[:\!n\cdot v]$ and 
	$M^e[:\!n\cdot v]$ in the above expression 
	by the reduction modulo $x^{n\cdot v}$ of the right side of the 
	corresponding identity in Lemma \ref{lem:tmncf2} and get a new 
	expression, 
	which we call $rhs$.
	Define 
	\begin{align*}
		X&:=x^v,\\
		a_n&:=W^e[n\cdot v:(n+1)\cdot v]/X^n,\\
		b_n&:=M^e[n\cdot v:(n+1)\cdot v]/X^n,\\
		c&:=R^e.
	\end{align*}
	Using the notation introduced above, we can represent the expressions $lhs$ \eqref{label:lhs} 
	and $rhs$ as  polynomials in $\mathbb{F}_2[a_1,...,a_{6},
	b_1,...,b_{6},c][X]$. Note that it is not a problem that 
	$a_j$ commutes with $b_k$ while $W^e$ does not commute with $M^e$, because
	in the expressions concerned, the products of $W^e$-terms and $M^e$-terms
	are always in the same order.
	We let the computer do the simplification and
	check that the difference between these
	two polynomials is indeed $O(X^{6})$, which completes the proof.
\end{proof}

\begin{proof}[Proof of Theorem \ref{th:ab}]
	We prove the theorem for $\CF(\mathbf{t}(z))$; for $\CF(\bar{\mathbf{t}}(z))$,
	the proof is similar.
	By Lemma \ref{lem:tmncf1}, we have
	For all $0\leq j,k\leq 1$,
	\begin{align*}
		\lim_{n\rightarrow\infty} M_{2n,j,k}&=M^e_{j,k},\\
		\lim_{n\rightarrow\infty} M_{2n+1,j,k}&=M^o_{j,k},\\
		\lim_{n\rightarrow\infty} W_{2n,j,k}&=W^e_{j,k},\\
		\lim_{n\rightarrow\infty} W_{2n+1,j,k}&=W^o_{j,k}.
	\end{align*}
	Let $z=1/x$. By the convergence theorem and identity \eqref{eq:cm}, 
		$$\CF(\mathbf{t}(z))=\frac{M^e_{0,1}(x)}{M^e_{0,0}(x)}.$$
By definition, that $\phi(M^e,0,1)$ and
	$\phi(M^e,0,0)$ are minimal polynomials of $M^e_{0,1}$ and $M^e_{0,0}$.
	Therefore 
	$$P(x,y)=\Res_t\left(\phi(M^e,0,1)(x,t),y^{12}\phi(M^e,0,1)(x,t/y)\right)$$
	is an annihilating polynomial of $f(x)=M^e_{0,1}/M^e_{0,0}$. 
	
	Define 
	$$Q(x,y)=q_4(x)y^4+q_3(x)y^3+q_2(x)y^2+q_1(x)y+q_0(x),$$
	where
\begin{align*}
q_{0}(x)&=x^{12} +x^{11} +x^8 +x^7 +x^6 +x^5 +x^3 ,\\
q_{1}(x)&=x^{11} +x^{10} +x^9 +x^7 +x^6 +x^4 +x^2 +x ,\\
q_{2}(x)&=x^{10} +x^2 +1 ,\\
q_{3}(x)&=x^{11} +x^{10} +x^9 +x^7 +x^6 +x^4 +x^2 +x ,\\
q_{4}(x)&=x^{11} +x^{10} +x^7 +x^6 +x^5 +x^3 +x^2 .
\end{align*}
	The polynomial $Q(x,y)$ is the candidate for the minimal polynomial of $f(x)$ found by 
	Pad\'e-Hermite approximation. 
	To prove that it is indeed the minimal polynomial of $f(x)$, 
	we only need to prove that it is an irreducible factor of $P(x,y)$ of 
	multiplicity $m$ and $R(x,y):=P(x,y)/Q(x,y)^m$
	is not an annihilating polynomial of $f(x)$.
	We verify the first point directly. For the second point, we find that
	when we truncate $f(x)$ to order $96$, and substitute it for $y$ in 
	$R(z,y)$, we get a series with valuation smaller than $96$, which proves that
	$R(z,y)$ is not an annihilating polynomial of $f(x)$. 
Finally, $z^{12}Q(1/z,y)$ is the minimal polynomial of $\CF(\mathbf{t}(z))=f(1/z)$.
\end{proof}

\section{Thue-Morse Stieltjes Continued Fraction}\label{sec:s}
	Using our program, we checked that conjecture \ref{conj:tmsa} holds for all 
	$a\in \mathbb{F}_{2^k}\backslash\{0,1\}$ for $k=2,3,4$.
In this section we present our method. 

\subsection{Testing of the conjecture}\label{sub:tms1}
For  $k\geq 2$, instead of all $a$ in $\mathbb{F}_{2^k}\backslash\{0,1\}$, we
only need to test one $a$ in each of the orbits of the Frobenius morphism
$\phi: a\mapsto a^2$, because if we let $\mathbf{t}$ denote the $(a,1)$-Thue-Morse
seuqence and $\phi(\mathbf{t})$ the $(\phi(a),1)$-Thue-Morse sequence, then
$$\Stiel(x;\phi(\mathbf{t}))=\phi(\Stiel(x;\mathbf{t})),$$
and they are either both algebraic or both transcendental.

For example, $\mathbb{F}_{8}\cong \mathbb{F}_2[u]/<u^3+u+1>$
is partitioned into orbits
$$\{0\},\{1\},\;\{\bar{u},\bar{u}^2,\;\bar{u}^4\},\mbox{ and } \{\bar{u}^3,\bar{u}^6,\bar{u}^5\}.$$
Therefore for $\mathbb{F}_8$, we only have to test the conjecture for 
$a=\bar{u}$ and $a=\bar{u}^3$.
Furthermore, we only have to test those $a$ in  $\mathbb{F}_{2^k}\backslash\{0,1\}$
whose orbit contains $k$ elements, because elements whose orbit has size
$l<k$ are already tested in $\mathbb{F}_{2^l}$. For example, for
$\mathbb{F}_{16}\cong \mathbb{F}_2[u]/<u^4+u+1 >$, 
the orbit of $a=\bar{u}^5$ contains only itself and $a^2$. This means
that $a^4=a$, and therefore it is already treated in $\mathbb{F}_4$.

\subsection{Our method}
The same method for testing conjecture \ref{conj:tmncf} can be used here to
test conjecture \ref{conj:tmsa}
for  $a\in \mathbb{F}_{2^k}\backslash\{0,1\}$ ($k\geq 2$), with only slight
modifications.
As most of the following have a uniform expression for all $a$, we first
regard $a$ as a formal variable. 

As in Section \ref{section:tmncf}, we define
	\begin{equation}
		M_{n}=
\begin{pmatrix} 1&t_{2^n-1}x\\ 1&0\end{pmatrix}
\begin{pmatrix} 1&t_{2^n-2}x\\ 1&0\end{pmatrix}
		\cdots \begin{pmatrix} 1&t_0x \\1&0\end{pmatrix},
	\end{equation}
	and 
	\begin{equation}
		W_{n}=
		\begin{pmatrix} 1&\bar{t}_{2^n-1}x\\ 1&0\end{pmatrix}
			\begin{pmatrix} 1&\bar{t}_{2^n-2}x\\ 1&0\end{pmatrix}
				\cdots \begin{pmatrix} 1&\bar{t}_0x \\1&0\end{pmatrix},
	\end{equation}
	where $\mathbf{t}$ is the $(a,1)$-Thue-Morse sequence, and 
	$\mathbf{\bar{t}}$, $(1,a)$-Thue-Morse sequence.
	We have $M_{n+1}=W_n\cdot M_n$ and $W_{n+1}=M_n\cdot W_n$ for all $n$.

	We define four $2\times 2$ matrices $M^e$, $M^o$, $W^e$ and $W^o$ as follows:
	For all $T\in \{M^e, M^o,W^e,W^o\}$, and all $i,j \in \{0,1\}$, 
	$T_{i,j}$ is defined to be the unique solution in $\mathbb{F}_2(a)[[x]]$
	of the polynomial
	$\phi(T,i,j)$ under certain initial conditions. The polynomials 
	$\phi(T,i,j)$ and initial conditions can be found in the annex.
	The reason for defining these matrices and how the polynomials $\phi(T,i,j)$
	and initial conditions are found are similar to those given in Section
	\ref{section:tmncf}.

As expected, the following Lemma holds:
	\begin{Lemma}\label{lem:4relations}
  \begin{align}	
		M^e&=W^o\cdot M^o,\label{eq:mes}\\
		M^o&=W^e\cdot M^e,\label{eq:mos}\\
		W^e&=M^o\cdot W^o,\label{eq:wes}\\
		W^o&=M^e\cdot W^e.\label{eq:wos}
  \end{align}	
	\end{Lemma}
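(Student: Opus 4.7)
The plan is to mimic the proof of Lemma \ref{lem:tmncf0} essentially verbatim, treating $a$ as a formal variable in $\mathbb{F}_2(a)$ throughout so that a single computation establishes the identities uniformly. Each of the four matrix identities unpacks into four scalar identities, one per entry; by symmetry it suffices to describe the argument for a single representative, say
$$M^e_{0,0}=W^o_{0,0}M^o_{0,0}+W^o_{0,1}M^o_{1,0},$$
the other fifteen being treated identically by the program.

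First I would use resultants to produce annihilating polynomials for each of the two product series $W^o_{0,0}M^o_{0,0}$ and $W^o_{0,1}M^o_{1,0}$: specifically
$$P_k(x,y)=\Res_z\bigl(\phi(W^o,0,k)(x,z),\; z^{D}\,\phi(M^o,k,0)(x,y/z)\bigr)$$
for $k=0,1$, where $D$ is the $y$-degree of $\phi(M^o,k,0)$. Then I would run Pad\'e--Hermite approximation over $\mathbb{F}_2(a)[[x]]$ to guess a candidate minimal polynomial $\phi_k(x,y)$ for each product, and verify the guess as in the proof of Lemma \ref{lem:tmncf0}: check that $\phi_k$ is an irreducible factor of $P_k$ of some multiplicity $m_k$, and confirm by truncating the product series to high enough order and substituting into $P_k/\phi_k^{m_k}$ that the quotient does not annihilate the product.

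Next I would compute
$$S(x,y)=\Res_z\bigl(\phi_0(x,z),\;\phi_1(x,y+z)\bigr),$$
an annihilating polynomial of the sum $W^o_{0,0}M^o_{0,0}+W^o_{0,1}M^o_{1,0}$, and verify that $\phi(M^e,0,0)$ is an irreducible factor of $S$ of the expected multiplicity, while checking (again by high-order truncation and substitution) that the complementary factor does not annihilate the sum. To finish the scalar identity I would compare a sufficient number of initial terms of $M^e_{0,0}$ and of the sum; since the initial conditions attached to $\phi(M^e,0,0)$ determine a unique solution in $\mathbb{F}_2(a)[[x]]$ (by the uniqueness criterion recalled in Section \ref{section:tmncf}), equality of the initial segments forces equality of the series.

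The main obstacle is computational rather than conceptual: all resultants, factorizations and Pad\'e--Hermite computations now live over $\mathbb{F}_2(a)$ rather than $\mathbb{F}_2$, so intermediate coefficients are rational functions in $a$ that can swell considerably, and the truncation orders needed to distinguish a genuine minimal polynomial from a spurious factor must be chosen with some care. One must also verify that the uniqueness-of-solution hypothesis ($q_1(0,a)=1$, $q_j(0,a)=0$ for $j>1$) remains valid generically in $a$ for each $\phi(T,i,j)$; this is an issue only at finitely many specializations of $a$, none of which lie in $\mathbb{F}_{2^k}\setminus\{0,1\}$ for the $k$ we are checking, so the formal-variable argument passes through unchanged.
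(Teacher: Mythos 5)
Your proposal is correct and follows essentially the same route as the paper, which proves this lemma simply by reference to the proof of Lemma \ref{lem:tmncf0}: resultants for products and sums, Pad\'e--Hermite guessing of minimal polynomials, verification via irreducible-factor and truncation checks, and finally matching initial terms against the uniqueness of the solution. Your added remarks about coefficient swell over $\mathbb{F}_2(a)$ and the genericity of the uniqueness hypothesis are sensible practical caveats but do not change the argument.
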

	\begin{proof}
		Similar to the proof of Lemma \ref{lem:tmncf0}.
	\end{proof}

We have the following observation concerning the structure of the four matrices.
	\begin{Observation}\label{th:obs1}
		For $k\geq 2$ and $u=2^{2k-1}$ the following identities hold:
	\begin{align}
		M^e[u\!:\!2u]&=x^u\cdot (a^{u}+1)\label{eq:obs}
		\cdot M^e[:\!u] + a^{u/2}x^{u}\cdot I_2,\\
		W^e[u\!:\!2u]&=x^u\cdot (a^{u}+1)\label{eq:obs1}
		\cdot W^e[:\!u] + a^{u/2}x^{u}\cdot I_2;
	\end{align}
		for $k\geq 2$ and $u=2^{2k}$,  
	\begin{align}
		M^o[u\!:\!2u]&=x^u\cdot (a^{u}+1)\label{eq:obs2}
		\cdot M^o[:\!u] + a^{u/2}x^{u}\cdot I_2,\\
		W^o[u\!:\!2u]&=x^u\cdot (a^{u}+1)\label{eq:obs3}
		\cdot W^o[:\!u] + a^{u/2}x^{u}\cdot I_2.
	\end{align}
	\end{Observation}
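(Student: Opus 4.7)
The plan is to lift the automaton-based technique of Lemma~\ref{lem:tmncf2} to the two-variable setting required here, since the coefficients $a^u+1$ and $a^{u/2}$ depend on the index $u$ and hence on $k$. Each entry $T_{i,j}$ for $T\in\{M^e,M^o,W^e,W^o\}$ and $i,j\in\{0,1\}$ lies in $\mathbb{F}_2[a][[x]]$ and is algebraic over $\mathbb{F}_2(a,x)$ by definition; by the bivariate version of Christol's theorem (Salon) its coefficient array
\[ T_{i,j}(a,x)=\sum_{\ell,n\geq 0} d_{T,i,j}(\ell,n)\,a^\ell x^n,\qquad d_{T,i,j}(\ell,n)\in\mathbb{F}_2, \]
is a $2$-automatic two-dimensional sequence, generated by a $2$-DFAO reading pairs of bits $(\ell_\mathrm{bit},n_\mathrm{bit})$ in parallel from the least significant end.

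Using $a^u+1=(a+1)^u$ in characteristic $2$ and comparing coefficients of $a^\ell x^{u+m}$, each of the four claims of the observation becomes equivalent to a family of $\mathbb{F}_2$-linear relations
\[ d_{T,i,j}(\ell,u+m)=d_{T,i,j}(\ell-u,m)+d_{T,i,j}(\ell,m)+\varepsilon_{T,i,j}(\ell,m,u) \]
indexed by $k\geq 2$ (with $u=2^{2k-1}$ for $T\in\{M^e,W^e\}$ and $u=2^{2k}$ for $T\in\{M^o,W^o\}$), where $0\leq m<u$, $\ell\geq 0$, and the correction $\varepsilon_{T,i,j}(\ell,m,u)$ is nonzero only at the single point $(\ell,m)=(u/2,0)$ when $i=j$. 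Since both $u$ and $u/2$ are powers of $2$, shifting an index by $u$ amounts to prepending a block of the form $1\,0^{2k-1}$ (respectively $1\,0^{2k}$) to its binary expansion, so the relation above translates into a finite system of conditions on the states reached by the $2$-DFAO after reading prefixes of length $O(k)$ in each coordinate. Exactly as in Lemma~\ref{lem:tmncf2}, one then shows that the orbit of the relevant starting state under reading the all-zero pair $(0,0)$ is eventually periodic in $k$, so the infinite family of identities collapses to finitely many state-level equalities that can be checked by computer, together with a finite base case for small $k$.

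The main obstacle I anticipate is controlling the two-dimensional automaton. The bivariate $2$-kernel is typically much larger than the one-dimensional kernels that appeared in Lemma~\ref{lem:tmncf2}, so the preliminary construction of the $2$-DFAO from the annihilating polynomials $\phi(T,i,j)$ together with enough initial coefficients of $T_{i,j}\in\mathbb{F}_2[a][[x]]$ may itself be delicate. Moreover, because the correction term $\varepsilon_{T,i,j}$ couples the two coordinates via $\ell=u/2$, $m=0$, the periodicity has to be established for the \emph{joint} trajectory of the state rather than coordinate by coordinate; one must check that a common $k$ serves both for the bulk of the identity and for the diagonal point that carries the correction term. Once the joint period has been detected, the remaining verifications reduce to finitely many explicit computations in $\mathbb{F}_2[a]$, and the observation for the three other matrices $W^e$, $M^o$, $W^o$ follows by exactly the same procedure applied to the corresponding $2$-DFAOs.
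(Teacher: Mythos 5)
You should be aware that the paper does not prove Observation \ref{th:obs1} in the generality in which it is stated: for $a$ a formal variable it is left as an (empirically observed) ingredient of Conjecture \ref{conj:tmsb}, and what the authors actually verify is the specialization of the four identities to a fixed $a\in\mathbb{F}_{2^k}\backslash\{0,1\}$ for $k=2,3,4$. Their route is therefore strictly one-dimensional: once $a$ lives in a finite field, each entry $T$ is a series in $\mathbb{F}_{2^k}[[x]]$ generated by an ordinary $2$-DFAO with output in $\mathbb{F}_{2^k}$, the identities become $T[[1w]_2]=(a^u+1)T[[w]_2]$ for $|w|=2k-1$, $w\neq 0^{2k-1}$, together with the corrected identity at $w=0^{2k-1}$ (equations \eqref{eq:w}--\eqref{eq:0}, then \eqref{eq:aw}--\eqref{eq:a0}), and the reduction to finitely many $k$ uses the eventual periodicity of the reachable-state sets $E_{2k-1}$, of the states $A(i,0^{2k-1})$, and, crucially, of the scalars $a^{2^{2k-1}}$ and $a^{2^{2k-2}}$ --- the last periodicity being available only because $a$ is algebraic over $\mathbb{F}_2$. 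Your bivariate route via Salon's theorem is a genuinely different and more ambitious strategy: if carried out it would establish the observation for formal $a$, hence for every $a$ at once, which is more than the paper achieves. The trade-off is that you must manage a two-dimensional kernel and a joint periodicity argument, precisely the cost the authors avoid by specializing first.

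As a proof, however, your sketch still has concrete open steps. First, the coefficient relation $d(\ell,u+m)=d(\ell-u,m)+d(\ell,m)+\varepsilon$ must hold for all $\ell\geq 0$, and the ``prepend a $1$'' description of the shift by $u$ is only valid on indices below $u$; to dispose of the ranges $u\leq\ell<2u$ and $\ell\geq 2u$ cleanly you need the support bound $d_{T,i,j}(\ell,n)=0$ for $\ell>n$, which is itself an infinite family of conditions that must either be derived from the defining equations or verified on the automaton as a further eventually-periodic family --- your proposal does not mention it. Second, the membership $T_{i,j}\in\mathbb{F}_2[a][[x]]$ (rather than $\mathbb{F}_2(a)[[x]]$), which Salon's theorem requires, has to be justified from the shape of $\phi(T,i,j)$ (it does follow from the unit constant term of the coefficient of $y$ in the normalized equation, but this needs to be said). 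Third, the argument terminates, as in the paper, in ``finitely many state-level equalities checked by computer''; until the bivariate automata are actually constructed and those checks performed, nothing is proved, and unlike the univariate case the paper gives no evidence that the bivariate kernels are of tractable size. So the outline is coherent and, if completed, stronger than the paper's result, but it is neither the paper's argument nor yet a complete one.
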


	\begin{Observation}\label{th:obs2}
		For $k\geq 2$ and $u=2^{2k-1}$,  
	\begin{align*}
		M_{2k}&=M^e[:\!u]+a^{u/2}x^{u}\cdot I_2,\\
		W_{2k}&=W^e[:\!u]+a^{u/2}x^{u}\cdot I_2;
	\end{align*}
		for $k\geq 2$ and $u=2^{2k}$,  
	\begin{align*}
		M_{2k+1}&=M^o[:\!u]+a^{u/2}x^{u}\cdot I_2,\\
		W_{2k+1}&=W^o[:\!u]+a^{u/2}x^{u}\cdot I_2.
	\end{align*}
	\end{Observation}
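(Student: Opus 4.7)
The plan is to prove the four identities of Observation \ref{th:obs2} simultaneously by induction on $k$, following the skeleton of the proof of Lemma \ref{lem:tmncf1}. The base case $k=2$ is verified directly: compute $M_4, M_5, W_4, W_5$ from their matrix-product definitions and compare them against the initial eight (resp.\ sixteen) coefficients of $M^e, M^o, W^e, W^o$ determined by the polynomials $\phi(T,i,j)$ and their prescribed initial conditions.

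For the induction step the claim is that
\[
M_{2k} \wedge W_{2k} \Rightarrow M_{2k+1} \wedge W_{2k+1}, \qquad M_{2k+1} \wedge W_{2k+1} \Rightarrow M_{2k+2} \wedge W_{2k+2},
\]
where each label refers to the corresponding identity in Observation \ref{th:obs2}. I will sketch $M_{2k} \wedge W_{2k} \Rightarrow M_{2k+1}$; the other three are entirely analogous, using $W_{n+1} = M_n \cdot W_n$ in place of $M_{n+1} = W_n \cdot M_n$ where appropriate and the matching identity from Lemma \ref{lem:4relations}. Put $v = 2^{2k-1}$, so that $u := 2v = 2^{2k}$ is the parameter attached to $M_{2k+1}$. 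The induction hypothesis together with $M_{2k+1} = W_{2k} M_{2k}$ gives
\begin{align*}
M_{2k+1} &= \bigl(W^e[:\!v] + a^{v/2} x^v I_2\bigr)\bigl(M^e[:\!v] + a^{v/2} x^v I_2\bigr) \\
&= W^e[:\!v]\, M^e[:\!v] + a^{v/2} x^v \bigl(W^e[:\!v] + M^e[:\!v]\bigr) + a^v x^{2v} I_2,
\end{align*}
and the final term is precisely $a^{u/2} x^u I_2$. It therefore suffices to identify $M^o[:\!u]$ with the sum of the first two terms.

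By \eqref{eq:mos} we have $M^o = W^e \cdot M^e$, so $M^o[:\!2v] \equiv W^e[:\!2v]\, M^e[:\!2v] \pmod{x^{2v}}$. Observation \ref{th:obs1} supplies
\[
M^e[v\!:\!2v] = x^v(a^v+1)\, M^e[:\!v] + a^{v/2} x^v I_2, \qquad W^e[v\!:\!2v] = x^v(a^v+1)\, W^e[:\!v] + a^{v/2} x^v I_2.
\]
Expanding $(W^e[:\!v] + W^e[v\!:\!2v])(M^e[:\!v] + M^e[v\!:\!2v])$ modulo $x^{2v}$, the corner term $W^e[v\!:\!2v]\, M^e[v\!:\!2v]$ is automatically $O(x^{2v})$ and drops, while the two off-diagonal cross terms sum to
\[
2 x^v (a^v+1)\, W^e[:\!v]\, M^e[:\!v] + a^{v/2} x^v \bigl(W^e[:\!v] + M^e[:\!v]\bigr);
\]
in characteristic $2$ the first summand vanishes, leaving exactly the required expression for $M^o[:\!u]$.

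The main obstacle is bookkeeping: for each of the four implications one must pair the correct identity from Lemma \ref{lem:4relations} with the correct parity case in Observation \ref{th:obs1}, keep track of which factor appears on the left and which on the right in the non-commutative matrix product, and verify that the two off-diagonal cross terms pair up symmetrically so that their sum collapses modulo $2$. Once this cancellation pattern is isolated the induction closes uniformly, yielding Observation \ref{th:obs2} for all $k \geq 2$.
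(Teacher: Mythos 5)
Your proposal is correct and follows essentially the same route as the paper: the paper establishes Observation \ref{th:obs2} precisely as Lemma \ref{lem:obs} (``Observation \ref{th:obs1} implies Observation \ref{th:obs2}''), by the same induction with base case $k=2$, the same use of $M_{n+1}=W_nM_n$ and the relation $M^o=W^e\cdot M^e$ from Lemma \ref{lem:4relations}, the same substitution of the Observation \ref{th:obs1} expressions for $M^e[u\!:\!2u]$ and $W^e[u\!:\!2u]$, and the same characteristic-$2$ cancellation of the two cross terms, concluding via the degree bound that congruence modulo $x^{2u}$ forces equality. Like the paper, your argument is conditional on Observation \ref{th:obs1}, which the paper verifies separately by a finite automaton check for each specific $a$.
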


	\begin{Lemma}\label{lem:obs}
Observation \ref{th:obs1} implies observation \ref{th:obs2}.
	\end{Lemma}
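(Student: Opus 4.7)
The plan is induction on $n \geq 4$, with two cases that alternate: writing $(E_n)$ for the pair of claims $M_n = M^e[:\!2^{n-1}] + a^{2^{n-2}} x^{2^{n-1}} I_2$ and $W_n = W^e[:\!2^{n-1}] + a^{2^{n-2}} x^{2^{n-1}} I_2$ when $n$ is even, and $(O_n)$ for the analogous claim with $M^o, W^o$ when $n$ is odd, the induction proceeds $(E_{2k}) \Rightarrow (O_{2k+1}) \Rightarrow (E_{2k+2})$. The base case $(E_4)$ is a direct finite check from the defining polynomials of $M^e, W^e$ and the matrix recursion for $M_4, W_4$.

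For the implication $(E_{2k}) \Rightarrow (O_{2k+1})$ (the reverse step being entirely symmetric), set $u = 2^{2k-1}$ and compute via $M_{2k+1}=W_{2k} M_{2k}$ using the inductive hypothesis:
\begin{align*}
M_{2k+1} &= \bigl(W^e[:\!u] + a^{u/2} x^u I_2\bigr)\bigl(M^e[:\!u] + a^{u/2} x^u I_2\bigr) \\
&= W^e[:\!u]\,M^e[:\!u] + a^{u/2} x^u\bigl(W^e[:\!u] + M^e[:\!u]\bigr) + a^u x^{2u} I_2.
\end{align*}
Since $a^u x^{2u} I_2 = a^{(2u)/2} x^{2u} I_2$, the target formula reduces to identifying the first two summands with $M^o[:\!2u]$.

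By Lemma \ref{lem:4relations}, $M^o = W^e M^e$. Splitting each factor as $W^e = W^e[:\!u] + W^e[u\!:\!2u] + O(x^{2u})$ and similarly for $M^e$, truncation at $2u$ gives
\begin{align*}
M^o[:\!2u] = W^e[:\!u]\,M^e[:\!u] + \bigl(W^e[:\!u]\,M^e[u\!:\!2u]\bigr)[:\!2u] + \bigl(W^e[u\!:\!2u]\,M^e[:\!u]\bigr)[:\!2u].
\end{align*}
Applying Observation \ref{th:obs1} to substitute $M^e[u\!:\!2u] = x^u(a^u+1) M^e[:\!u] + a^{u/2} x^u I_2$ and likewise for $W^e[u\!:\!2u]$, the two identical copies of $x^u(a^u+1)(W^e[:\!u]M^e[:\!u])[:\!u]$ cancel in characteristic $2$, leaving exactly $a^{u/2} x^u(W^e[:\!u] + M^e[:\!u])$ alongside $W^e[:\!u] M^e[:\!u]$, which matches our expression for $M_{2k+1}$. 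The formula for $W_{2k+1}$ follows symmetrically from $W_{2k+1} = M_{2k} W_{2k}$ together with $W^o = M^e W^e$. The only genuinely delicate point is the truncation bookkeeping — verifying that the degree-$<3u$ cross-products really do yield matching truncated expressions from both sides so that characteristic $2$ cancellation applies — but this reduces to an elementary check about polynomial truncation.
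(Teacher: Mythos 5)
Your proof is correct and follows essentially the same route as the paper's: induction starting at $M_4,W_4$, using the recursion $M_{2k+1}=W_{2k}M_{2k}$ together with $M^o=W^eM^e$ from Lemma \ref{lem:4relations}, decomposing $M^o[:\!2u]$ into cross-products, substituting Observation \ref{th:obs1}, and letting the two $x^u(a^u+1)W^e[:\!u]M^e[:\!u]$ terms cancel in characteristic $2$. The only cosmetic difference is that the paper phrases the final step as showing a difference of degree at most $2u-1$ is $O(x^{2u})$, whereas you establish the truncated identity directly; these are equivalent.
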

	\begin{proof}
		Let us call the four identities in observation \ref{th:obs2} also by 
		the name $M_{2k}$, $W_{2k}$, $M_{2k+1}$ and $W_{2k+1}$.
		Suppose observation \ref{th:obs2} is true.
	We want to prove observation \ref{th:obs1} by induction. For $n=2$, the 
	identities are verified directly.
	The inductive step is
	\begin{align*}
		M_{2k} \wedge W_{2k} &\Rightarrow M_{2k+1} \wedge W_{2k+1},\\
		M_{2k+1} \wedge W_{2k+1} &\Rightarrow M_{2k+2} \wedge W_{2k+2}.
	\end{align*}

	Let us show for example how to prove 
   \begin{equation}
		M_{2k} \wedge W_{2k} \Rightarrow M_{2k+1}.
   \end{equation}
  By definition, the left side of the identity $M_{2k+1}$ is equal to 
	$$W_{2k}\cdot M_{2k},$$
	which, by induction hypothesis, is equal to 
	$$(W^e[:\!u]+a^{u/2}x^{u}I_{2})\cdot
	(M^e[:\!u]+a^{u/2}x^{u}I_{2}),$$
		where $u=2^{2k-1}$.
	Therefore only need to prove that 
	\begin{equation}\label{eq:1s}
		W^e[:\!u]\cdot M^e[:\!u]+a^{u/2}x^{u}(W^e[:\!u]+
		M^e[:\!u])-M^o[:\!2u]
	\end{equation}
	is equal to zero. 
	As the degree of the above polynomial is at most $2u-1$, 
	we only need to prove that it is 
	$O(x^{2^{2n}})$. By \eqref{eq:mos}, 
	\begin{align*}
		&\quad M^o[:\!2u]\\&\equiv W^e[:\!2u]\cdot M^e[:\!2u] \mod x^{2u}\\
		&\equiv W^e[:\!u]\cdot M^e[:\!u]+W^e[u\!:\!2u]\cdot
		M^e[:\!u] +W^e[:\!u]\cdot M^e[u\!:\!2u] \mod x^{2u}
	\end{align*}
	Therefore  \eqref{eq:1s} is congruent modulo $ x^{2u}$
 to
	\begin{align*}
		a^{u/2}x^{u}(W^e[:\!u]+ M^e[:\!u])+
W^e[u:2u]\cdot M^e[:\!u] +W^e[:\!u]\cdot M^e[u\!:\!2u].
	\end{align*}
		Substitute $W^e[u\!:\!2u]$ and $M^e[u\!:\!2u]$ by the 
		expressions in observation \ref{th:obs1} and we obtain that the quantity
		above is $O(x^{2u})$. That is, expression \eqref{eq:1s} is congruent
		to $0$ modulo $x^{2u}$; since it has no term of order higher than $2u-1$,
		it is equal to~$0$.
	\end{proof}
	\begin{Proposition}
	Observation \ref{th:obs2} implies conjecture \ref{conj:tmsb}.
	\end{Proposition}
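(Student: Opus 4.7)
\emph{Approach.} The plan is to combine Observation \ref{th:obs2} with the matrix representation of the Stieltjes convergents, so as to identify $\Stiel(x;\mathbf{t})$ with an explicit ratio of entries of $M^e$, and then extract its minimal polynomial by a resultant computation of the sort used in Lemma \ref{lem:tmncf0} and in the proof of Theorem \ref{th:ab}.

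\emph{First step: identify the limit.} Unfolding the Stieltjes recurrence gives, for every $k\geq 1$,
\[
\begin{pmatrix} P_{2^k-1} & Q_{2^k-1} \\ P_{2^k-2} & Q_{2^k-2}\end{pmatrix} = M_k \begin{pmatrix} 0 & 1 \\ 1/x & 0\end{pmatrix},
\]
so that $\Stiel_{2^k-1}(x;\mathbf{t}) = M_{k,0,1}/(x\,M_{k,0,0})$. Setting $u=2^{2k-1}$ in Observation \ref{th:obs2}, the even-index entries become $M_{2k,0,1}=M^e_{0,1}[:\!u]$ and $M_{2k,0,0}=M^e_{0,0}[:\!u]+a^{u/2}x^u$. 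As $k\to\infty$, both truncations converge $x$-adically in $\mathbb{F}_2(a)[[x]]$ to $M^e_{0,1}$ and $M^e_{0,0}$, while the correction $a^{u/2}x^u$ tends to zero. Since $M^e_{0,0}(0)=1$ and $M^e_{0,1}(0)=0$, the quotient is a well-defined element of $\mathbb{F}_2(a)[[x]]$, and
\[
\Stiel(x;\mathbf{t}) = \frac{M^e_{0,1}}{x\,M^e_{0,0}}.
\]

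\emph{Second step: extract the minimal polynomial.} Since $M^e_{0,1}$ and $M^e_{0,0}$ are algebraic with known annihilators $\phi(M^e,0,1)$ and $\phi(M^e,0,0)$, an annihilating polynomial $R(x,y)\in\mathbb{F}_2(a)[x,y]$ of $M^e_{0,1}/(x\,M^e_{0,0})$ is obtained by a single resultant, for instance $R(x,y) = \Res_z\!\bigl(\phi(M^e,0,0)(x,z),\ \phi(M^e,0,1)(x,xyz)\bigr)$. Let $P(x,y) = p_0(x)+p_1(x)y+p_2(x)y^2+p_4(x)y^4$ be the quartic predicted in Conjecture \ref{conj:tmsb}. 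I will then verify that $P$ is an irreducible factor of $R$ of some multiplicity $m$ and that the cofactor $R/P^m$ is not an annihilator of $\Stiel(x;\mathbf{t})$, using the familiar truncation--substitution test from Lemma \ref{lem:tmncf0}: plugging a sufficiently long truncation of $\Stiel(x;\mathbf{t})$ into $R/P^m$ yields a nonzero series of valuation strictly less than the truncation order.

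\emph{Main obstacle.} All mathematical content sits in the first step; the second is purely symbolic, handled by the same Pad\'e--Hermite and resultant machinery employed throughout the paper. The only technical annoyance will be the inflation of coefficient sizes due to the parameter $a$, which enlarges $\phi(M^e,i,j)$ and hence the resultant, but no new ideas should be required.
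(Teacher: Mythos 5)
Your proposal is correct and follows essentially the same route as the paper: take the limit in Observation \ref{th:obs2} to identify $\Stiel(x;\mathbf{t})=M^e_{0,1}/(x\,M^e_{0,0})$, then recover the minimal polynomial from $\phi(M^e,0,1)$ and $\phi(M^e,0,0)$ by the resultant--irreducible-factor--truncation test already used in the proof of Theorem \ref{th:ab}. The only difference is that you spell out the convergent identity and the vanishing of the correction term $a^{u/2}x^u$ slightly more explicitly than the paper does.
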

	\begin{proof}
 First,	taking the limit of the identities in observation \ref{th:obs2}, 
we have	for all $j,k\in \{0,1\}$, 
		\begin{align*}
			\lim_{n\rightarrow \infty} M_{2n,j,k}&=M^e_{j,k},\\
			\lim_{n\rightarrow \infty} M_{2n+1,j,k}&=M^o_{j,k},\\
			\lim_{n\rightarrow \infty} W_{2n,j,k}&=W^e_{j,k},\\
			\lim_{n\rightarrow \infty} W_{2n+1,j,k}&=W^o_{j,k}.
		\end{align*}
		Therefore 
		\begin{equation*}
			\Stiel(x;\mathbf{t})=\lim_{n\rightarrow \infty} \frac{P_n}{Q_n}\\
			=\lim_{n\rightarrow \infty} \frac{P_{2^{2n}-1}}{Q_{2^{2n}-1}}\\
			=\lim_{n\rightarrow \infty} \frac{M_{2n,0,1}/x}{M_{2n,0,0}}\\
			=\frac{M^e_{0,1}/x}{M^e_{0,0}}.
		\end{equation*}
		We obtain the minimal polynomial of $\Stiel(x;\mathbf{t})$
		from those of $M^e_{0,1}$ and
		$M^e_{0,0}$, using the same method described in the proof of Theorem 
		\ref{th:ab}.
	\end{proof}

\begin{Remark}
	The above proposition says that observation \ref{th:obs2} implies 
	conjecture \ref{conj:tms} when $a$ is regarded as a formal variable.
	The implication also holds when $a$ specializes as an element in 
	$\mathbb{F}_{2^k}\backslash\{0,1\}$ ($k\geq 2$).
\end{Remark}
Therefore, to prove that conjecture \ref{conj:tmsa} holds for a certain 
	$a\in\mathbb{F}_{2^k}\backslash\{0,1\}$ ($k\geq 2$), we only need to prove
	that observation \ref{th:obs1} holds for $a$.
Because of the following
argument, we only have to check \eqref{eq:obs} through \eqref{eq:obs3} for finitely
many $k$'s instead of for all $k\geq 2$:

	For $k\geq 2$ and $u=2^{2k-1}$, identity \eqref{eq:obs} and \eqref{eq:obs1} can be written as
	\begin{equation}\label{eq:w}
		T[[1w]_2]=(a^{u}+1)\cdot T[[w]_2]
	\end{equation}
	for every component $T$ of $M^e$ and $W^e$
	and all binary words $w$ of length
	$2k-1$ and $w\neq 0^{2k-1}$; and
	\begin{equation}\label{eq:0}
		T[[1w]_2]=(a^{u}+1)\cdot T[[w]_2]+a^{u/2}
	\end{equation}
	for $w=0^{2k-1}$.

	We calculate the an automaton of $T$ from the algebraic equation that 
	defines it, following the method in \cite{Christol1980KMFR} (see Section
	\ref{sec:eq2kern}).
	Let $A(s,w)$ denote the state reached after reading $w$ from right to
	left starting from the state $s$, and $\tau$ the output function. 
	Define 
	$$E_{2k-1}=\{A(i,w) \mid |w|=2k-1, w\neq 0^{2k-1}\}.$$
	Identity \eqref{eq:w} and \eqref{eq:0} can be written as
	\begin{equation}\label{eq:aw}
	\tau(A(s,1))=(a^{u}+1)\cdot \tau(A(s,\epsilon))
	\end{equation}
	for all $s\in E_{2k-1}$, and 
	\begin{equation}\label{eq:a0}
		\tau(A(s,1))=(a^{u}+1)\cdot \tau(A(s,\epsilon))+a^{u/2}
	\end{equation}
	for $s=A(i,0^{2k-1})$.

	As $E_{2k+1}$ is completely determined by $E_{2k-1}$, the sequence
	$(E_{2k+1})_k$ is ultimately periodic. The sequences $(A(i,0^{2k-1}))_k$
	and $a^{2^{2k-1}}$ are also periodic. 
	Therefore we only need to check \eqref{eq:aw} and \eqref{eq:a0} 
	for finitely many $k$'s.

\subsection{An example} 
	For $a\in \mathbb{F}_{4}\backslash \{0,1\}$ and 
	 $T=Me_{0,0}$, we find that the minimal $2$-DFAO of $T$ has as transition
	function 
$(n,j)\mapsto \delta(n,j)$ ($\Lambda(n):=[\delta(n,0),\delta(n,1)]$):
{
\renewcommand{\arraystretch}{1.2}

\begin{longtable}{| c c  |  c c  |  c c  |  c c  |}
\hline
$n$ & $\Lambda(n)$ & $n$ & $\Lambda(n)$ & $n$ & $\Lambda(n)$ & $n$ & $\Lambda(n)$ \\
\hline
0 & [1, 2]& 5 & [2, 8]& 10 & [7, 8]& 15 & [13, 17]\\
1 & [3, 4]& 6 & [9, 4]& 11 & [8, 13]& 16 & [18, 4]\\
2 & [5, 6]& 7 & [10, 4]& 12 & [14, 4]& 17 & [19, 12]\\
3 & [1, 7]& 8 & [11, 6]& 13 & [15, 16]& 18 & [16, 6]\\
4 & [4, 4]& 9 & [6, 12]& 14 & [12, 16]& 19 & [17, 8]\\
\hline
\end{longtable}

}

 and output function $n\mapsto \tau(n)$:
	{
\renewcommand{\arraystretch}{1.2}
$$
\begin{array}{| c c  |  c c  |  c c  |  c c  |  c c  |  c c  |  c c  |}
\hline
n & \tau(n) & n & \tau(n) & n & \tau(n) & n & \tau(n) & n & \tau(n) & n & \tau(n) & n & \tau(n) \\
\hline
0 & 1& 3 & 1& 6 & a + 1& 9 & a + 1& 12 & 1& 15 & 1& 18 & a\\
1 & 1& 4 & 0& 7 & 0& 10 & 0& 13 & 1& 16 & a& 19 & a + 1\\
2 & a& 5 & a& 8 & a& 11 & a& 14 & 1& 17 & a + 1&  &\\
\hline
\end{array}
$$
}

 The tuple $(A(i,0^{2k-1})$, $E_{2k-1})$ has the following values:
\begin{align*}
	k&=3: ({1}, \{2, 4, 7, 8, 9\}),\\
	k&=5: ({1}, \{2, 4, 7, 8, 9, 13, 14\}),\\
	k&=7: ({1}, \{2, 4, 7, 8, 9, 13, 14, 17, 18\}),\\
	k&=9: ({1}, \{2, 4, 7, 8, 9, 13, 14, 17, 18\}).
\end{align*}

 For all $k\geq 1$, $a^{2^{2k-1}}=a+1$.  Therefore we only have to check that
 identity \eqref{eq:obs} holds for $k=3,5,7$, which turns out to be true.

 \section{Period-doubling Continued Fractions}\label{section:pd}
 The method for checking conjecture \ref{conj:tmncf} can be adapted for the 
 verification of conjecture \ref{conj:pdncf}. In this section, we give
 two examples. First we introduce the notation.
 
	For $(a,b)\in (\F2[z]\backslash \F2)^2$. Let $\mathbf{p}$ be the
	$(a,b)$-period-doubling sequence. 
	Define two sequence $A_n(x)$ and $B_n(x)$ by 
	\begin{align*}
		A_0(x)&=x^{\deg(a)}\begin{pmatrix}
			a(1/x)&1\\
			1&0
		\end{pmatrix}	\\
		B_0(x)&=x^{\deg(b)}\begin{pmatrix}
			b(1/x)&1\\
			1&0
		\end{pmatrix}	\\
		A_{n+1}(x)&=B_n(x)A_n(x)\forall n\geq 0\\
		B_{n+1}(x)&=A_n(x)A_n(x)\forall n\geq 0\\
	\end{align*}
Define $x:=1/z$. For an non-zero polynomial $P(z)$, we define $\tilde{P}(x)$ 
to be  $P(1/x)$.  Then
\begin{equation*}
	\CF_n(\mathbf{p}(z))=
	\frac{P_n(z)}{Q_n(z)}=\frac{\tilde{P}_n(x)}{\tilde{Q}_n(x)}
	\in\mathbb{F}_2((x))=\mathbb{F}_2((1/z)).
\end{equation*}
Comparing the definition of $A_n(x)$ with definition \eqref{eq:pq}, we see that
\begin{align*}
	A_n(x)_{0,1}&= x^{d_n} \tilde{P}_{2^n-1}(x),\\
	A_n(x)_{0,0}&= x^{d_n} \tilde{Q}_{2^n-1}(x),
\end{align*}
for some positive integer $d_n$,
and
\begin{align}\label{eq:cm}
	\CF_{{2^{2n}-1}}(\mathbf{p}(z))
	=\frac{\tilde{P}_{2^{2n}-1}(x)}{\tilde{Q}_{2^{2n}-1}(x)}
	= \frac{A_{2n}(x)_{0,1}}{A_{2n}(x)_{0,0}}.
\end{align}

\subsection{ The $(z^2,z)$-period-doubling sequence}\label{subsection:pd1}
In this subsection, we prove the following theorem.
\begin{Theorem}\label{thm:pd1}
	Let $(a,b)=(z^2,z)\in (\F2[z]\backslash \F2)^2$. Let $\mathbf{p}$ be the
	$(a,b)$-period-doubling sequence. The power series $\CF(\mathbf{p}(z))$ is algebraic
	over $\F2(z)$; its minimal polynomial is
	$$
z^4+x^3z^2+	(x^5+x^4)z+x^3+x^2+1=0.
	$$
\end{Theorem}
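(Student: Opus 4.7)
The plan is to adapt the Guess'n'Prove strategy of Section \ref{section:tmncf} to the period-doubling setting. Starting from the recurrences $A_{n+1} = B_n A_n$ and $B_{n+1} = A_n A_n$, I would first split $(A_n)$ and $(B_n)$ by the parity of $n$ and posit four $2\times 2$ matrices $A^e, A^o, B^e, B^o$ with entries in $\F2[[x]]$, each entry defined as the unique solution in $\F2[[x]]$ of an annihilating polynomial $\phi(T,i,j)(x,y)$ under finitely many initial conditions. The polynomials $\phi(T,i,j)$ would be guessed by running the Derksen algorithm for Padé--Hermite approximants on data extracted from the matrix recurrences at moderately large $n$, exactly as in Section \ref{section:tmncf}.

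Next I would establish identities that encode one double step of the substitution. Iterating the recurrences gives
\begin{align*}
A_{n+2} &= A_n A_n B_n A_n, & B_{n+2} &= B_n A_n B_n A_n,
\end{align*}
so the expected relations take the form $A^e = A^o A^o B^o A^o$ and $B^e = B^o A^o B^o A^o$, together with their odd-indexed counterparts. Each would be proved in the style of Lemma \ref{lem:tmncf0}: compute an annihilating polynomial of the right-hand side entry by iterated resultants, guess its minimal polynomial by Padé--Hermite approximation, verify by direct factorization plus a truncation-and-substitute check that this guess is indeed the minimal polynomial, and finally match initial terms with $\phi(A^e,i,j)$ and $\phi(B^e,i,j)$.

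I would then prove a structural block lemma analogous to Lemma \ref{lem:tmncf2}, expressing $T[:\!cu]$ for appropriate integer constants $c$ in terms of $T[:\!c'u]$ for smaller $c'$, with $u = 2^{2k}$ or $u = 2^{2k-1}$. Using the algorithm of Section \ref{sec:eq2kern} I would construct a $2$-automaton generating each $T_{i,j}$ from $\phi(T,i,j)$, translate the proposed block identities into finitely many state-level equations on the automaton via the sets $E_{2k} = \{A(i,w) \mid |w|=2k,\ w \neq 0^{2k}\}$ and the states $A(i,0^{2k})$, and verify them for $k$ from $2$ up to the eventual period of these data. A parity-induction analogous to Lemma \ref{lem:tmncf1} then identifies $A_{2k}, A_{2k+1}, B_{2k}, B_{2k+1}$ with explicit truncations of the four matrices; passing to the limit yields $\CF(\mathbf{p}(z)) = A^e_{0,1}(x)/A^e_{0,0}(x)$ with $x = 1/z$, and the stated quartic minimal polynomial is extracted from $\phi(A^e,0,1)$ and $\phi(A^e,0,0)$ by a resultant computation followed by irreducible-factor selection and a truncation check to rule out the other factors.

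The main obstacle I expect is the structural block lemma. The period-doubling substitution $\sigma: a \mapsto ab,\ b \mapsto aa$ is less symmetric than the Thue--Morse one, and the four-fold iterated products $A_{n+2} = A_n A_n B_n A_n$ and $B_{n+2} = B_n A_n B_n A_n$ couple the block decompositions of $A^e, B^e$ much more tightly than the balanced products in the Thue--Morse case. This is likely to force a more intricate block pattern with additional summands, and correspondingly a larger automaton, making both the Padé--Hermite guessing stage and the finite automaton verification more delicate; for the small pair $(a,b) = (z^2,z)$ the automata should remain tractable, but the combinatorial bookkeeping of the decomposition is where I expect the bulk of the work to lie.
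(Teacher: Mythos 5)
Your proposal follows essentially the same route as the paper's proof: four limit matrices $A^e, A^o, B^e, B^o$ defined as unique power-series roots of guessed annihilating polynomials, algebraic relations between them certified by resultants plus Pad\'e--Hermite guessing and a truncation check, a block-structure lemma verified on the automata produced by the algorithm of Section \ref{sec:eq2kern}, a parity induction identifying $A_n$ and $B_n$ with truncations of the limit matrices, and extraction of the quartic from $\phi(A^e,0,1)$ and $\phi(A^e,0,0)$. Two details should be corrected. First, the paper's Lemma \ref{lem:0} uses the \emph{single-step} relations $A^e=B^o\cdot A^o$, $A^o=B^e\cdot A^e$, $B^e=A^o\cdot A^o$, $B^o=A^e\cdot A^e$, each a product of only two matrices; your four-fold double-step relations are written with the wrong parities: since $A_{n+2}$ has the same parity as $A_n$, the identity $A_{n+2}=A_nA_nB_nA_n$ yields $A^e=A^eA^eB^eA^e$ in the limit, not $A^e=A^oA^oB^oA^o$, and likewise for $B$. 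Second, the relevant truncation lengths are not powers of two but $u=(5\cdot 2^n+1)/3$ for $n$ even and $u=(5\cdot 2^n+2)/3$ for $n$ odd, reflecting $\deg a=2$ and $\deg b=1$; and, contrary to your expectation, the resulting block identities (Lemma \ref{lem:1}) are markedly \emph{simpler} than in the Thue--Morse case --- for instance $A^e[:2u]=A^e[:u]+x^u\cdot I_2$ --- so the automaton verification here uses only a $7$-state machine and is lighter, not heavier. Both slips are of the kind that the computational guessing stage would expose and repair, so the strategy as a whole goes through.
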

	We define four $2\times 2$ matrices $A^e$, $A^o$, $B^e$ and $B^o$ as follows:
	For all $T\in \{A^e, A^o,B^e,B^o\}$, and all $i,j \in \{0,1\}$, 
	$T_{i,j}$ is defined to be the unique solution in $\mathbb{F}_2(a)[[x]]$
	of the polynomial
	$\phi(T,i,j)$ under certain initial conditions. The polynomials 
	$\phi(T,i,j)$ and initial conditions can be found in the annex.
	The reason for defining these matrices and how the polynomials $\phi(T,i,j)$
	and initial conditions are found are similar to those given in Section
	\ref{section:tmncf}.
\begin{Lemma}\label{lem:0}
	The following identities hold:
	\begin{align*}
		A^e&=B^o\cdot A^o,\\
		A^o&=B^e\cdot A^e,\\
		B^e&=A^o\cdot A^o,\\
		B^o&=A^e\cdot A^e.
	\end{align*}
\end{Lemma}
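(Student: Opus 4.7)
The plan is to mirror the Guess'n'Prove strategy used for Lemma \ref{lem:tmncf0}, adapted to the period-doubling recursion $A_{n+1}=B_nA_n$, $B_{n+1}=A_nA_n$. I will prove one representative identity, say the $(0,0)$-component of
$$A^e_{0,0}=B^o_{0,0}A^o_{0,0}+B^o_{0,1}A^o_{1,0},$$
coming from the matrix identity $A^e=B^o\cdot A^o$; the three remaining component-wise identities (for each of the four matrix equalities in the lemma) are handled by the same recipe, with the only notable variation being the identity $B^e=A^o\cdot A^o$, where both factors are components of the same matrix $A^o$ and consequently the relevant resultants are taken against two copies of the polynomials $\phi(A^o,i,j)$.

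First I would compute, for each product $B^o_{0,k}A^o_{k,0}$ ($k=0,1$), an annihilating polynomial via the resultant
$$P_k(x,y)=\Res_z\!\left(\phi(B^o,0,k)(x,z),\;z^{d}\cdot \phi(A^o,k,0)(x,y/z)\right),$$
where $d$ is the $y$-degree of $\phi(A^o,k,0)$. I would then guess a candidate minimal polynomial $\phi_k(x,y)$ for $B^o_{0,k}A^o_{k,0}$ by running the Derksen Padé-Hermite algorithm on the appropriate vector of powers of the truncated series, and verify that $\phi_k$ is an irreducible factor of $P_k$ of some multiplicity $m_k$, and that $P_k/\phi_k^{m_k}$ is not annihilating (checked by truncating the series to a sufficient order and substituting).

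Next I would treat the sum: an annihilating polynomial of $B^o_{0,0}A^o_{0,0}+B^o_{0,1}A^o_{1,0}$ is
$$S(x,y)=\Res_z\!\left(\phi_0(x,z),\;\phi_1(x,y+z)\right),$$
and I would check (again by a direct factorization followed by a truncation test of order exceeding the degree of $S$) that $\phi(A^e,0,0)$ is an irreducible factor of $S$ of multiplicity $\mu$ with $S/\phi(A^e,0,0)^\mu$ not annihilating the sum. Since the defining initial conditions on $\phi(A^e,0,0)$ pin down its root in $\mathbb{F}_2[[x]]$ uniquely, a final comparison of the first few coefficients of $A^e_{0,0}$ and of $B^o_{0,0}A^o_{0,0}+B^o_{0,1}A^o_{1,0}$ identifies them.

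The routine part is bookkeeping; the main obstacle is computational: the resultants $P_k$ and $S$ can have very large degrees in $y$ (roughly the product of the $y$-degrees of the defining polynomials) and the truncation orders needed to certify non-annihilation grow correspondingly. As in Lemma \ref{lem:tmncf0}, this is a finite check delegated to the computer, but care is required to choose truncation orders comfortably above the degrees of the putative residual factors so that the argument is rigorous. Once the identity $A^e=B^o\cdot A^o$ is established component-wise, the remaining three identities $A^o=B^e\cdot A^e$, $B^e=A^o\cdot A^o$, $B^o=A^e\cdot A^e$ follow by the identical procedure, with the caveat that for $B^e=A^o\cdot A^o$ and $B^o=A^e\cdot A^e$ some pairs of factors are equal, which only simplifies (not complicates) the resultant computations.
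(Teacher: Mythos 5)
Your proposal follows essentially the same route as the paper, which proves Lemma \ref{lem:0} by the method of Lemma \ref{lem:tmncf0}: resultants to get annihilating polynomials of the products, Pad\'e--Hermite to guess the minimal polynomials, an irreducibility-plus-truncation check to certify them, a second resultant for the sum, and a comparison of initial terms to conclude via uniqueness. Your remark about the repeated factor in $B^e=A^o\cdot A^o$ and $B^o=A^e\cdot A^e$ is a correct and harmless adaptation.
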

\begin{proof}
		Similar to the proof of Lemma \ref{lem:tmncf0}.
\end{proof}
\begin{Lemma}\label{lem:1}
For $n\geq 2$ even and $u=(5\cdot 2^n+1)/3$,
\begin{align*}
  A^e[:2u]&=A^e[:u]+x^u\cdot I_2\\
  B^e[:2u]&=B^e[:u]+x^u\begin{pmatrix}x^{u-1}&x^{u-2}+x^{u-3}\\0&x^{u-1}
  \end{pmatrix}
\end{align*}
For $n\geq 1$ odd, for $u=(5\cdot 2^n+2)/3$,
\begin{align*}
  A^o[:2u-1]&=A^o[:u]+x^{2u-2}\cdot I_2\\
  B^o[:2u-1]&=B^o[:u]+x^u\begin{pmatrix}1&x^{u-2}+x^{u-3}\\0&1
  \end{pmatrix}
\end{align*}
\end{Lemma}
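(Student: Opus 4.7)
The plan is to mimic the automaton-based strategy used in the proof of Lemma~\ref{lem:tmncf2}. I would first construct, via the algorithm of Section~\ref{sec:eq2kern}, a $2$-DFAO for each of the $16$ scalar series that arise as entries of the four matrices $A^e, A^o, B^e, B^o$, starting from the defining polynomials $\phi(T,i,j)$ and their initial conditions. Each identity in Lemma~\ref{lem:1} then asserts that the slice $T[u\!:\!2u]$ (or $T[u\!:\!2u-1]$ in the odd case) of a particular entry $T$ equals a prescribed short polynomial in $x$; equivalently, it prescribes the value of every coefficient of $T$ at indices in $[u,2u)$ (or $[u,2u-1)$). The plan is to translate each such prescription into a statement about the outputs $\tau(A(s,w))$ of the corresponding automaton after reading suitable binary words $w$.

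The next step is to decompose the infinite family of conditions parametrized by $n$ into finitely many uniform sub-families. Unlike the Thue-Morse setting, where $u=2^{2k-1}$ is a power of two and $[u,2u)$ corresponds cleanly to binary indices of a single fixed length beginning with $1$, here $u=(5\cdot 2^n+1)/3$ (even case) or $u=(5\cdot 2^n+2)/3$ (odd case) is not a power of two. However, these values satisfy the linear recurrences $u_{k+1}=4u_k-1$ and $u_{k+1}=4u_k-2$, so the binary expansion of $u$ grows by a fixed two-letter block at each step; for example, in the even case $u$ has the shape $1(10)^{k-1}11$. Accordingly, I would partition $[u,2u)$ into finitely many sub-intervals, each consisting of binary indices of the form $\pi w$ where $\pi$ is an $n$-dependent prefix built by iterating a fixed rewrite rule on the binary expansion of $u$ and $w$ ranges over a suffix alphabet of bounded length.

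As in the proof of Lemma~\ref{lem:tmncf2}, the automaton reformulation of each sub-family condition then involves a finite set of states $E_n=\{A(s,\pi w):w\text{ admissible}\}$ together with the anchor states attached to each prefix. Since the automaton is finite and the prefixes are produced by iterating a fixed rewrite rule, the sequence of tuples $(E_n,\text{anchor states})$ is ultimately periodic; combined with the fact that the few remaining numerical pieces of data (the boundary coefficients and the monomial exponents $2u-1$, $2u-2$, $2u-3$) also vary in an eventually periodic manner, each infinite family of identities reduces to a finite list of equalities between automaton outputs, which the program then verifies directly.

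The main obstacle, relative to the Thue-Morse proof, is the bookkeeping in the second step: because $u$ is not a power of two the range $[u,2u)$ does not correspond to binary words of one fixed shape, so one must systematically split it into several prefix classes and produce the rewrite rule that propagates each class from $n$ to $n+2$. I expect both the number of prefix classes and the length of the initial transient before ultimate periodicity sets in to be noticeably larger than in the Thue-Morse case, but once the decomposition has been produced correctly the remaining verification is mechanical and of the same flavour as the one that completes the proof of Lemma~\ref{lem:tmncf2}.
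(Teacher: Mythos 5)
Your overall strategy is the one the paper uses: build a $2$-DFAO for each entry from $\phi(T,i,j)$ and its initial terms, reinterpret the claim ``$T[u\!:\!2u]$ equals a prescribed sparse polynomial'' as a family of conditions on outputs $\tau(A(\cdot,\cdot))$, and reduce the infinite family over $n$ to a finite check. The difficulty you correctly identify --- that $u=(5\cdot 2^n+1)/3$ is not a power of two, so $[u,2u)$ is not a single block of binary words of one shape --- is indeed the crux, and the paper resolves it by listing the binary expansions of all $j\in[u,2u)$ (they fall into four shapes such as $1(10)^{k-l}11\{0,1\}^{2l}$ and $1(10)^{l}0\{0,1\}^{n-2l}$ with $k=n/2-1$).

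The gap is in how you propose to organize that decomposition. You partition $[u,2u)$ into ``finitely many'' classes of the form $\pi w$ with $\pi$ a single $n$-dependent prefix and $w$ a suffix of \emph{bounded} length. That cannot cover the interval: $[u,2u)$ contains about $\tfrac{5}{3}\cdot 2^n$ indices, so finitely many prefixes each followed by boundedly many suffixes fall exponentially short. In the correct decomposition both the structured part $1(10)^m$ \emph{and} the free part $\{0,1\}^{2l}$ are of unbounded length (the free block has length up to $n-2$), and for a fixed $n$ the parameter $l$ already ranges over $\sim n/2$ values. The paper absorbs all of this at once by forming four $n$-independent regular languages $B_0,\dots,B_3$ (unions over all $m$ and $l$) and computing the finite reachable-state sets $E_i=\{A(0,w):w\in B_i\}$; the verification then amounts to checking $\tau$ on a handful of states (e.g.\ $E_0=\{6\}$ forces the coefficient $1$ at position $u$, and $\tau\equiv 0$ on $E_1,E_2,E_3$ kills the rest). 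Your appeal to ultimate periodicity of $(E_n,\text{anchors})$ is the right finiteness mechanism and is essentially equivalent to this reachability computation, so the proof can be repaired, but as written the prefix/suffix bookkeeping --- which you yourself flag as the main obstacle --- is set up in a way that does not exhaust $[u,2u)$ and would have to be replaced by the regular-language closure argument before the mechanical verification can begin.
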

\begin{proof}
	We give proof of 
	\begin{equation}\label{eq:pdlem1}
	A^e_{0,0}[:2u]=A^e_{0,0}[:u]+x^u
	\end{equation}
	for $n\geq 2$ even and $u=(5\cdot 2^n+1)/3$; the proofs of the other $15$ 
	cases are similar. Let $T=A^e_{0,0}$. Identity \eqref{eq:pdlem1} can be
	written as
	\begin{equation}\label{eq:pdlem1p}
		T[u:2u]=x^u.
	\end{equation}
	From the minimal polynomial and the first terms of $T$, we find its minimal
	automaton. Its transition function $\delta$ 
($\Lambda(n):=[\delta(n,0),\delta(n,1)]$)	
	and output function $\tau$
	are as follows:

\begin{longtable}{| c c  |  c c  |  c c  |  c c  |}
\hline
$n$ & $\Lambda(n)$ & $n$ & $\Lambda(n)$ & $n$ & $\Lambda(n)$ & $n$ & $\Lambda(n)$ \\
\hline
0 & [1, 2]& 2 & [4, 5]& 4 & [4, 4]& 6 & [6, 4]\\
1 & [3, 2]& 3 & [6, 6]& 5 & [2, 6]&  & \\
\hline
\end{longtable}

\begin{longtable}{| c c  |  c c  |  c c  |  c c  |}
\hline
$n$ & $\tau(n)$ & $n$ & $\tau(n)$ & $n$ & $\tau(n)$ & $n$ & $\tau(n)$ \\
\hline
0 & 1& 2 & 0& 4 & 0& 6 & 1\\
1 & 1& 3 & 1& 5 & 0&  & \\
\hline
\end{longtable}
	Let $A(s,w)$ denote the state reached after reading $w$ from right
	to left starting from the state $s$.

For $n\geq 2$ even, $k=n/2-1$, and $u=(5\cdot 2^n+1)/3$
	the binary expansions of integers $j$ in $[u, 2u[$ have the following forms:

\begin{longtable}{|  c  |   c   |}
	\hline
  $j$ & $[j]_2$ \\
\hline
	$j=u$ & $1(10)^k11$  \\
	$u<j<2^{n+1}$ & $ 1(10)^{k-l}11\{0,1\}^{2l}$, $1\leq l \leq k$ \\
	$2^{n+1}\leq j<2u-2$ & $ 1(10)^{l}0\{0,1\}^{n-2l}$, $0\leq l \leq k$ \\
	$j=u-1, u-2$ & $1(10)^k10\{0,1\}$\\
\hline
\end{longtable}

Consider the sets
	\begin{align*}
		B_0&=\{1(10)^k11 \mid k\geq 0 \}\\
 	B_1&=\{ 1(10)^{m}11\{0,1\}^{2l} \mid l\geq 1,\;m\geq 0\}\\
 	B_2&=\{ 1(10)^{m}0\{0,1\}^{2l} \mid l\geq 1,\;m\geq 0\}\\
 	B_3&=\{ 1(10)^m10\{0,1\}\mid m\geq 0\}.
	\end{align*}
For $i=0,1,2,3$, define
	$$E_i=\{ A(0,w) \mid w\in B_i \}.$$
We find that
	\begin{align*}
		E_0&=\{6\} \\
		E_1&= \{4\}\\
		E_2&= \{4, 5\}\\
		E_3&= \{4\}\\
	\end{align*}
We verify that for all $s\in E_0$, $\tau(s)=1$, and for all $s\in E_i$, $i=1,2,3$, $\tau(s)=0$. This proves identity \eqref{eq:pdlem1p} for all $n\geq 2$ even,
and $u=(5\cdot 2^n+1)/3$.
\end{proof}

\begin{Lemma}\label{lem:2}
For $n\geq 2$ even and $u=(5\cdot 2^n+1)/3$
\begin{align*}
  A_{n}&=A^e[:u]+x^u\cdot I_2,\\
  B_{n}&=B^e[:u].
\end{align*}
For $n\geq 1$ odd, for $u=(5\cdot 2^n+2)/3$
\begin{align*}
  A_{n}&=A^o[:u],\\
  B_{n}&=B^o[:u]+x^u\cdot I_2.\\
\end{align*}
\end{Lemma}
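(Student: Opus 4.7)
The strategy is to mimic the proof of Lemma~\ref{lem:tmncf1}, proceeding by induction on $n$. First I would verify the statement directly for the base cases $n=1$ and $n=2$ by computing $A_1, B_1, A_2, B_2$ from the recursion and matching them coefficient by coefficient with the initial segments of $A^o, B^o, A^e, B^e$.

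For the inductive step I would establish the two implications
\begin{align*}
A_{2k}\wedge B_{2k} &\Rightarrow A_{2k+1}\wedge B_{2k+1},\\
A_{2k+1}\wedge B_{2k+1} &\Rightarrow A_{2k+2}\wedge B_{2k+2},
\end{align*}
where $A_n, B_n$ name the four identities in the statement. Consider the representative implication $A_{2k}\wedge B_{2k}\Rightarrow A_{2k+1}$. Set $u=(5\cdot 2^{2k}+1)/3$, so that $2u=(5\cdot 2^{2k+1}+2)/3$ is exactly the parameter attached to $A_{2k+1}$. The defining recursion and the inductive hypothesis yield
\[
A_{2k+1}=B_{2k}\cdot A_{2k}=B^e[:\!u]\cdot\bigl(A^e[:\!u]+x^u I_2\bigr)=B^e[:\!u]A^e[:\!u]+x^uB^e[:\!u].
\]
On the other hand, Lemma~\ref{lem:0} gives $A^o=B^e\cdot A^e$, hence $A^o[:\!2u]=\bigl(B^e[:\!2u]\cdot A^e[:\!2u]\bigr)[:\!2u]$; expanding by Lemma~\ref{lem:1} I obtain
\[
A^o[:\!2u]=\bigl((B^e[:\!u]+N)(A^e[:\!u]+x^u I_2)\bigr)[:\!2u],
\]
where $N=x^u\begin{pmatrix}x^{u-1}&x^{u-2}+x^{u-3}\\0&x^{u-1}\end{pmatrix}$ is the correction from Lemma~\ref{lem:1}. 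The cross term $N\cdot A^e[:\!u]$ truncated at $x^{2u}$ involves only the first three coefficients of $A^e[:\!u]$ (since $N$ has support in degrees $\{2u-3,2u-2,2u-1\}$), so $A_{2k+1}=A^o[:\!2u]$ reduces to a concrete matrix identity between $N$ and those three coefficients.

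I would formalize the comparison along the lines of Lemma~\ref{lem:tmncf1}: introduce formal symbols for the bulk chunks of $A^e$ and $B^e$, record the low-degree coefficients explicitly from the polynomials $\phi(T,i,j)$, and cast each side as a polynomial with coefficients in $\mathbb{F}_2[x]$. Treating distinct block-symbols as commuting is harmless because the relative order of $A^e$- and $B^e$-factors is preserved within every product that appears. The remaining seven implications follow by the same scheme; the variant $A_{2k+1}\wedge B_{2k+1}\Rightarrow B_{2k+2}$ deserves separate care because its right-hand side involves the square $(A_{2k+1})^2$ and relies on $B^o=A^e\cdot A^e$ from Lemma~\ref{lem:0}.

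The main obstacle, compared with the Thue--Morse case, is that the correction matrices in Lemma~\ref{lem:1} for $B^e$ and $B^o$ are not scalar multiples of the identity: they carry off-diagonal entries of the form $x^{u-2}+x^{u-3}$. Hence the gap between the two sides is not automatically controlled by matching the top-degree term as in Lemma~\ref{lem:tmncf1}; one must verify a genuine low-degree matrix cancellation using the specific initial coefficients of $A^e, B^e, A^o, B^o$. Once these finitely many coefficients are pinned down (either from the defining polynomials $\phi(T,i,j)$ or read off $A_2, B_2$), what remains is a finite polynomial identity that the computer handles mechanically.
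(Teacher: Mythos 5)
Your proposal follows essentially the same route as the paper: induction on $n$ with direct verification at $n=1,2$, rewriting $A_{n+1}=B_nA_n$ via the inductive hypothesis, expanding $A^o[:\!2u]$ through Lemmas \ref{lem:0} and \ref{lem:1}, and reducing the discrepancy to the product of the non-scalar correction matrix with the first three coefficients of $A^e$ (the paper checks $x^uN\cdot(A^e[:\!u]\bmod x^3)\equiv 0 \bmod x^{2u}$ with $A^e[:\!u]\equiv\bigl(\begin{smallmatrix}1&x^2\\x^2&0\end{smallmatrix}\bigr)\bmod x^3$), together with the degree bound $\leq 2u-1$ that upgrades the congruence to equality. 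The only blemish is a parity slip in your aside: the implication $A_{2k+1}\wedge B_{2k+1}\Rightarrow B_{2k+2}$ uses $B^e=A^o\cdot A^o$, not $B^o=A^e\cdot A^e$.
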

\begin{proof}
	Let us call the identities involving $A_n$ and $B_n$ also by the name
	$A_n$ and $B_n$. We will prove the lemma by induction.
	It can be verified directly that $A_n$ and $B_n$ holds for $n=1,\; 2$. 
 For the inductive step, we want to prove that for $n\geq 2$, 
	$$
		A_n \wedge B_n \Rightarrow A_{n+1} \wedge B_{n+1}.
		$$
		We give the proof of 
	$$
		A_{n} \wedge B_n \Rightarrow A_{n+1}
		$$
		when $n$ is even. The proofs of the other cases are similar.
		Now suppose that for some $n\geq 2$ even and $u=(5\cdot 2^n+1)/3$ 
		it holds that
\begin{align*}
	A_{n}&=A^e[:u]+x^u\cdot I_2, \\
	B_{n}&=B^e[:u].
\end{align*}
We want to prove that
	\begin{equation}\label{eq:ao}
  A_{n+1}=A^o[:2u].
	\end{equation}
	By definition and induction hypothesis, 
\begin{align*}
	A_{n+1}&= B_{n}A_{n},\\
	&=B^e[:u]\cdot (A^e[:u]+x^u\cdot I_2).
\end{align*}
	As the degrees of both $A_{n+1}$ and $A^o[:2u]$ are at most $2u-1$, to prove 
	that they are equal, we only need to prove that they are congruent modulo
	$x^{2u}$.
	By lemma \ref{lem:0} and lemma \ref{lem:1},
\begin{align*}
	A^o[:2u]&\equiv B^e[:2u]\cdot A^e[:2u]\\
	&\equiv 
\left	(B^e[:u]+x^u\begin{pmatrix}x^{u-1}&x^{u-2}+x^{u-3}\\0&x^{u-1}
	\end{pmatrix}\right)\cdot
	(A^e[:u]+x^u\cdot I_2) \mod x^{2u}.
\end{align*}
Therefore
\begin{align*}
	A^o[:2u]-A_{n+1}&\equiv x^u\begin{pmatrix}x^{u-1}&x^{u-2}+x^{u-3}\\0&x^{u-1} 
	\end{pmatrix} \cdot A^e[:u]\\
	&\equiv x^u\begin{pmatrix}x^{u-1}&x^{u-2}+x^{u-3}\\0&x^{u-1} 
	\end{pmatrix} \cdot (A^e[:u]\mod x^3)\\
	&\equiv x^u\begin{pmatrix}x^{u-1}&x^{u-2}+x^{u-3}\\0&x^{u-1} 
	\end{pmatrix} \cdot 
	\begin{pmatrix}
		1&x^2\\x^2&0
	\end{pmatrix}\\
	&\equiv 0 \mod x^{2u}.\qedhere
\end{align*}
\end{proof}
	Theorem \ref{thm:pd1} can be derived from lemma \ref{lem:2} and the definition
	of $A^e$, using the same method as in the proof of theorem \ref{th:ab}.

\subsection{ The $(z^3, z^2+z+1)$-period-doubling sequence}\label{subsection:pd2}
In this subsection, we prove the following theorem.
\begin{Theorem}\label{thm:pd2}
	Let $(a,b)=(z^3,z^2+z+1)\in (\F2[z]\backslash \F2)^2$. Let $\mathbf{p}$ be the
	$(a,b)$-period-doubling sequence. The power series $\CF(\mathbf{p}(z))$ is algebraic
	over $\F2(z)$; its minimal polynomial is
	$$
	z^4+ (x^5 + x^4 + x^3)z^2+(x^8 + x^6 + x^5 + x^3)z+x^5 + x^3 + x^2=0
	$$
\end{Theorem}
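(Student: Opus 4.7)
The plan is to follow step for step the Guess'n'Prove strategy carried out in Subsection~\ref{subsection:pd1}, now with the pair $(a,b)=(z^3,z^2+z+1)$. First I would redefine four matrices $A^e$, $A^o$, $B^e$, $B^o$ with entries in $\mathbb{F}_2[[x]]$ as the unique solutions, under a fixed number of initial coefficients, of polynomials $\phi(T,i,j)\in\mathbb{F}_2[x,y]$ obtained by running the Derksen algorithm for Padé-Hermite approximants on enough initial terms of $A_{2n}$, $A_{2n+1}$, $B_{2n}$, $B_{2n+1}$. The required truncation orders and approximant type must be enlarged compared with Theorem~\ref{thm:pd1}, because $\deg a+\deg b=5$ rather than $3$, so both the matrix entries and their annihilating polynomials will have bigger degrees.

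Next I would establish the four multiplicative relations
\begin{align*}
A^e &= B^o\cdot A^o, & A^o &= B^e\cdot A^e, \\
B^e &= A^o\cdot A^o, & B^o &= A^e\cdot A^e,
\end{align*}
component by component, exactly as in Lemma~\ref{lem:0} and Lemma~\ref{lem:tmncf0}: assemble a candidate annihilating polynomial of each scalar right-hand side via a resultant of the relevant $\phi$'s, check that the expected $\phi(T,i,j)$ divides it with the correct multiplicity, and verify on a sufficiently long truncation that the quotient does not annihilate the right-hand side. This step is routine but increases in computational cost with the size of the polynomials.

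The heart of the argument, and the main obstacle, is the analog of Lemma~\ref{lem:1}. For the new partial quotients the self-similarity integer $u$ will no longer be $(5\cdot 2^n+1)/3$; I would first compute many initial coefficients of each component of $A^e$, $B^e$, $A^o$, $B^o$ and search for an affine combination of powers of $2$ (depending on the parity of $n$) together with a $2\times 2$ correction matrix $R^T(x)$ with polynomial entries such that
$$T[:2u]\;=\;T[:u]\;+\;x^u R^T(x)$$
holds for all sufficiently large $n$. Guessing the correct $u$ and the correct $R^T$ is the delicate combinatorial step, as both are forced by the degree profile of the partial quotients $a=z^3$ and $b=z^2+z+1$; I would expect $R^T$ to have more nonzero entries than in Lemma~\ref{lem:1}, reflecting the nontrivial lower-order terms of $b$. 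Once guessed, each scalar identity is reduced to a finite verification on the minimal $2$-DFAO of the component: following Section~\ref{sec:eq2kern}, compute this automaton from the annihilating polynomial and the first few coefficients, partition the binary expansions of the integers in $[u,2u[$ (together with the boundary expansions of $u$ and $2u-1$) into finitely many patterns, and check that the images of these patterns under the transition function fall into sets of states on which the output function takes the predicted values.

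With the structural lemma in hand, the analog of Lemma~\ref{lem:2}, expressing $A_n$ and $B_n$ in terms of $A^e[:u]$, $B^e[:u]$ (or $A^o[:u]$, $B^o[:u]$) plus an explicit correction, follows by induction on $n$ from the recursion $A_{n+1}=B_n A_n$, $B_{n+1}=A_n A_n$, the relations of the second step, and the structural identities of the third; since both sides have degree at most $2u-1$, congruence modulo $x^{2u}$ is enough to conclude equality. Passing to the limit and invoking \eqref{eq:cm} yields $\CF(\mathbf{p}(z))=A^e_{0,1}(1/z)/A^e_{0,0}(1/z)$, and the minimal polynomial is then recovered by the same resultant plus Padé-Hermite certificate used at the end of the proof of Theorem~\ref{th:ab}, together with a final direct check that the polynomial one obtains coincides with $z^4+(x^5+x^4+x^3)z^2+(x^8+x^6+x^5+x^3)z+x^5+x^3+x^2$.
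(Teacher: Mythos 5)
Your overall architecture is exactly the paper's: four limit matrices $A^e,A^o,B^e,B^o$ defined as unique solutions of guessed annihilating polynomials, the four multiplicative relations certified by resultants plus a truncation check, a structural self-similarity lemma reduced to finitely many states of a $2$-automaton, an induction expressing $A_n,B_n$ through truncations of the limits, and a final resultant to extract the minimal polynomial. The genuine gap is in your ansatz for the structural lemma. You posit $T[:\!2u]=T[:\!u]+x^uR^T(x)$ with $R^T$ a polynomial correction, i.e.\ the same shape as Lemma \ref{lem:1} only ``with more nonzero entries.'' No identity of that shape holds for $(a,b)=(z^3,z^2+z+1)$: it would force the segment $T[u\!:\!2u]$ to be a bounded number of monomials for every $n$, i.e.\ the components of the limit matrices would have to be lacunary series, which they are not here. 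The identity the paper actually guesses and proves (Lemma \ref{lem:4}, with $u=(2^{n+3}+1)/3$, $v=2^n$ for $n$ even, and $u=(2^{n+3}+2)/3$ for $n$ odd) is a genuinely richer recursion, e.g.
\begin{equation*}
A^e[:\!2u]=(1+x^v+x^{2v})\bigl(A^e[:\!u]+x^vA^e[:\!u-v]\bigr)+x^{4v}A^e[:\!2u-4v]+(x^u+x^{u+v}+x^{u+2v})\begin{pmatrix}1&1\\0&1\end{pmatrix},
\end{equation*}
involving a polynomial multiplier $(1+x^v+x^{2v})$, a truncation at the auxiliary order $u-v$, and a copy of $T[:\!2u-4v]$; correspondingly the analogue of Lemma \ref{lem:2} must take the two-truncation form $A_n=A^e[:\!u]+x^v A^e[:\!u-v]+x^u\begin{pmatrix}1&1\\0&1\end{pmatrix}$ (Lemma \ref{lem:3}), not ``$A^e[:\!u]$ plus an explicit correction.'' With your template the guessing stage would return no solution and the proof would stall at its central point.

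Once the correct ansatz is in place, the rest proceeds as you describe: each scalar identity is rewritten as finitely many relations $T[[uw]_2]=\cdots$ over five regular languages of binary expansions, checked on the ($29$-state, for $A^e_{0,0}$) minimal automaton computed as in Section \ref{sec:eq2kern}; the induction $A_n\wedge B_n\Rightarrow A_{n+1}\wedge B_{n+1}$ uses Lemma \ref{lem:00} and Lemma \ref{lem:4} modulo $x^{2u}$ together with the degree bound; and the minimal polynomial is recovered exactly as in the proof of Theorem \ref{th:ab}. So your proposal is salvageable, but the key combinatorial discovery --- the actual form of the self-similarity --- is missing, and it is qualitatively different from the $(z^2,z)$ case you extrapolate from.
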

	We define four $2\times 2$ matrices $A^e$, $A^o$, $B^e$ and $B^o$ as follows:
	For all $T\in \{A^e, A^o,B^e,B^o\}$, and all $i,j \in \{0,1\}$, 
	$T_{i,j}$ is defined to be the unique solution in $\mathbb{F}_2(a)[[x]]$
	of the polynomial
	$\phi(T,i,j)$ under certain initial conditions. The polynomials 
	$\phi(T,i,j)$ and initial conditions can be found in the annex.
	The reason for defining these matrices and how the polynomials $\phi(T,i,j)$
	and initial conditions are found are similar to those given in Section
	\ref{section:tmncf}.
\begin{Lemma}\label{lem:00}
	The following identities hold:
	\begin{align*}
		A^e&=B^o\cdot A^o,\\
		A^o&=B^e\cdot A^e,\\
		B^e&=A^o\cdot A^o,\\
		B^o&=A^e\cdot A^e.
	\end{align*}
\end{Lemma}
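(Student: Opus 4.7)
The plan is to follow exactly the strategy used in the proof of Lemma \ref{lem:tmncf0}, treating the four matrix identities entry-by-entry. I will describe the argument for the $(0,0)$-entry of the first identity $A^e = B^o \cdot A^o$, namely
$$A^e_{0,0} = B^o_{0,0}\, A^o_{0,0} + B^o_{0,1}\, A^o_{1,0},$$
the other fifteen scalar identities being verified in exactly the same way. Each scalar identity involves proving that a specific sum of products of two entries, each already defined as the unique solution in $\mathbb{F}_2(a)[[x]]$ of some polynomial $\phi(T,i,j)$ with prescribed initial terms, coincides with another such series.

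First I would produce annihilating polynomials for each of the two products $B^o_{0,0}A^o_{0,0}$ and $B^o_{0,1}A^o_{1,0}$ via the resultant construction of \cite{Bostan2017S}: for example,
$$P(x,y) = \Res_z\bigl(\phi(B^o,0,0)(x,z),\; z^{d}\,\phi(A^o,0,0)(x,y/z)\bigr),$$
where $d$ is the $y$-degree of $\phi(A^o,0,0)$, is annihilating for $B^o_{0,0}A^o_{0,0}$. Next I would use Padé–Hermite approximation on a sufficiently long prefix of each product series to guess a candidate $\phi_0(x,y)$ (resp.\ $\phi_1(x,y)$) for its minimal polynomial, and prove minimality by checking directly that the candidate is an irreducible factor of $P(x,y)$ of some multiplicity $m$, then truncating the product series to an order safely exceeding the $x$-valuation of any non-trivial remainder and substituting it into $P(x,y)/\phi_i(x,y)^m$ to confirm this quotient is not annihilating.

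Then, to handle the sum, I would form the resultant
$$S(x,y) = \Res_z\bigl(\phi_0(x,z),\; \phi_1(x,y+z)\bigr),$$
which annihilates $B^o_{0,0}A^o_{0,0} + B^o_{0,1}A^o_{1,0}$, verify that $\phi(A^e,0,0)$ is an irreducible factor of $S(x,y)$ with some multiplicity $\mu$, and use the same truncation-and-substitution test to show that $S(x,y)/\phi(A^e,0,0)^\mu$ is not annihilating. This proves $\phi(A^e,0,0)$ is the minimal polynomial of the sum. Finally, since $\phi(A^e,0,0)$ together with its prescribed initial conditions uniquely determines a power series, I need only check that enough leading terms of $A^e_{0,0}$ and $B^o_{0,0}A^o_{0,0} + B^o_{0,1}A^o_{1,0}$ agree, and this pins down that the two series are equal.

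The remaining fifteen scalar identities (three more entries of $A^e = B^o A^o$, and four entries of each of $A^o = B^e A^e$, $B^e = A^o A^o$, $B^o = A^e A^e$) are handled identically by the same algorithmic chain. The main obstacle is not conceptual but computational: the resultants have potentially large degrees, so Padé–Hermite approximation and the final truncation checks must be carried out to sufficiently high orders to make the non-annihilation tests conclusive; these orders are determined mechanically from the degrees of the polynomials $\phi(T,i,j)$ involved, and the verification is entirely delegated to the computer, exactly as in Lemma \ref{lem:tmncf0}.
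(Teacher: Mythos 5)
Your proposal is correct and follows essentially the same route as the paper: the authors prove Lemma \ref{lem:00} by explicit reference to the proof of Lemma \ref{lem:tmncf0}, whose resultant/Pad\'e--Hermite/truncation/initial-terms chain you have reproduced faithfully, adapted to the period-doubling relations (including the squared identities $B^e=A^o\cdot A^o$ and $B^o=A^e\cdot A^e$).
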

\begin{proof}
		Similar to the proof of Lemma \ref{lem:tmncf0}.
\end{proof}
\begin{Lemma}\label{lem:4}
	For $n\geq 2$ even,  $u=(2^{n+3}+1)/3$, $v=2^n$, 
	\begin{align*}
		A^e[:2u]&=(1+x^v+x^{2v})\cdot (A^e[:u]+x^v A^e[:u-v])+x^{4v}A^e[:2u-4v]+\\
		&\quad  (x^u+x^{u+v}+x^{u+2v})\begin{pmatrix}1&1\\0&1\end{pmatrix},\\
		B^e[:2u]&=(1+x^v+x^{2v})\cdot (B^e[:u]+x^v B^e[:u-v])+x^{4v}B^e[:2u-4v]+\\
		&\quad  \begin{pmatrix}x^{2u-1}&x^{2u-1}+x^{2u-4}\\0&x^{2u-1}\end{pmatrix}.
	\end{align*}
	For $n\geq 1$ odd,  $u=(2^{n+3}+2)/3$, $v=2^n$, 
	\begin{align*}
		A^o[:2u-1]&=(1+x^v+x^{2v})\cdot (A^o[:u]+x^v A^o[:u-v])+x^{4v}A^o[:2u-4v-1]+\\
		&\quad  (x^{2u-2})\begin{pmatrix}1&0\\0&1\end{pmatrix},\\
		B^o[:2u-1]&=(1+x^v+x^{2v})\cdot (B^o[:u]+x^v B^o[:u-v])+x^{4v}B^o[:2u-4v-1]+\\
		&\quad  \begin{pmatrix}x^{u}+x^{u+v}+x^{u+2v}& x^{2u-2}+x^{2u-4}
		\\0& x^{u}+x^{u+v}+x^{u+2v}\end{pmatrix}.
	\end{align*}
\end{Lemma}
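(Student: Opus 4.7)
The plan is to mirror the strategy of Lemma \ref{lem:1}, reducing each of the $16$ scalar identities (one for each entry of each of the four matrices $A^e, A^o, B^e, B^o$) to a finite automaton-level verification. I would first treat one representative case in detail, say the $(0,0)$-entry of the identity for $A^e[:2u]$, since the remaining fifteen cases differ only in the choice of automaton and of boundary correction, and are handled by precisely the same template.

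Fix $n \geq 2$ even, $u = (2^{n+3}+1)/3$, $v = 2^n$, and write $T = A^e_{0,0}$. The claim splits into a collection of piecewise identities: for each dyadic block $[kv,(k+1)v)\subset[0,2u)$, the coefficient $T[j]$ must equal a prescribed $\mathbb{F}_2$-linear combination of $T[j-v]$, $T[j-2v]$ and $T[j-4v]$ coming from the bulk contribution $(1+x^v+x^{2v})(A^e[:u]+x^v A^e[:u-v])+x^{4v}A^e[:2u-4v]$, or a specific value at the three isolated indices where $x^u,x^{u+v},x^{u+2v}$ contribute. Writing the indices in binary converts each sub-identity into a linear relation indexed by a family of binary words of length depending linearly on $n$.

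Next I would compute, via the Christol--Kamae--Mend\`es~France--Rauzy construction of Section \ref{sec:eq2kern} applied to $\phi(A^e,0,0)$ and its first terms, the minimal $2$-DFAO generating $T$. Let $A(s,w)$ denote the state reached after reading $w$ right-to-left from $s$, and $\tau$ the output. For each family $B_i$ of binary words occurring in the decomposition above, set $E_i^{(n)}=\{A(0,w):w\in B_i,\;|w|\text{ compatible with }n\}$. Because the automaton has finitely many states, each sequence $(E_i^{(n)})_n$ is eventually periodic; the same holds for $A(0,0^{2k})$ and for the orbits of the boundary prefixes associated with $x^u, x^{u+v}, x^{u+2v}$. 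Hence the required identities need only be checked for $n$ up to the common period, which is a direct computer verification. Once $A^e$ has been dealt with, the same procedure applied to $B^e$, $A^o$ and $B^o$ (with their respective automata, and with the obvious replacements $u\mapsto(2^{n+3}+2)/3$ and $2u\mapsto 2u-1$ in the odd case) yields the remaining claims.

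The main obstacle I expect is combinatorial rather than conceptual: because $u=(2^{n+3}+1)/3$ is not a power of $2$, the binary expansions of integers in $[0,2u)$ do not fall into the simple $1(10)^k$-patterns of Lemma \ref{lem:1}, and one must enumerate enough prefix families to partition $[0,2u)$ exactly, with each piece matched to the correct right-hand side. In particular one must track carefully how $x^{4v}A^e[:2u-4v]$ and $(1+x^v+x^{2v})x^v A^e[:u-v]$ overlap, since their scales $v$, $2v$, $4v$ interleave nontrivially with $u$, and one has to verify that the three isolated boundary contributions from $x^u,x^{u+v},x^{u+2v}$ fall exactly into the gaps left by the bulk terms. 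Once this enumeration is pinned down, feeding each family into the automaton and checking the resulting relations between outputs is routine, as in Lemmas \ref{lem:tmncf2} and \ref{lem:1}.
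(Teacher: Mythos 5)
Your proposal follows essentially the same route as the paper: the paper also reduces the $(0,0)$-entry of the $A^e$ identity to a decomposition of $[u,2u)$ into five pieces governed by the interleaving $u<3v<u+v<4v<u+2v<2u$, rewrites these as ten relations $T[[10w]_2]=\dots$ indexed by explicit regular languages $L_0,\dots,L_4$, computes the minimal $2$-DFAO of $T$ from $\phi(A^e,0,0)$, forms the finite state sets $E_j=\{A(0,w):w\in L_j\}$, and checks the resulting output-function relations directly, with the other fifteen entries handled identically. The combinatorial obstacle you flag (that $u=(2^{n+3}+1)/3$ is not a power of $2$, so the prefix families are more intricate than in Lemma \ref{lem:1}) is exactly what the paper resolves by its explicit five-part split and choice of the languages $L_j$.
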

\begin{proof}
	We give the proof of the $(0,0)$-th component of the first identity. The
	proofs of the other $15$ identities are similar.
	Let $T=A^e_{0,0}$. We want to prove that
	\begin{equation}\label{eq:lem4}
		T[:\!2u]=(1+x^v+x^{2v})\cdot (T[:\!u]+x^v T[:\!u-v])+x^{4v}T[:\!2u-4v]+
		x^u+x^{u+v}+x^{u+2v}.
	\end{equation}
	for all $n\geq 2$ even,  $u=(2^{n+3}+1)/3$, and $v=2^n$. 
	Equation \eqref{eq:lem4} can be rewritten as
	\begin{align*}
		&\quad		T[u\!:\!2u]\\
	&=x^vT[u-v\!:\!u]+x^{2v}T[u-v\!:\!u]+x^{3v}T[:\!u-v]+\\
		&\quad	x^{4v}T[:\!2u-4v]+ x^u+x^{u+v}+x^{u+2v}\\
		&=(x^u+x^vT[u-v\!:\!2v])+ (x^vT[2v\!:\!u]+x^{3v}T[:\!u-2v])+\\
		&\quad (x^{u+v}+x^{3v}T[u-2v\!:v]+ x^{2v}T[u-v:2v])+\\
		&\quad (x^{2v}T[2v\!:\!u]+x^{3v}T[v\!:\!u-v]+x^{4v}T[\!:\!u-2v])+\\
		&\quad (x^{u+2v}+x^{4v}T[u-2v\!:\!2u-4v])
	\end{align*}
	noting that $u<3v<u+v<4v<u+2v<2u$. The above identity can be decomposed into
	five parts:
	\begin{align*}
		T[u:3v]&=x^u+x^vT[u-v\!:\!2v] \\
		T[3v:u+v]&=x^vT[2v\!:\!u]+x^{3v}T[:\!u-2v]\\
		T[u+v:4v]&=x^{u+v}+x^{3v}T[u-2v\!:v]+ x^{2v}T[u-v:2v]\\
		T[4v:u+2v]&=x^{2v}T[2v\!:\!u]+x^{3v}T[v\!:\!u-v]+x^{4v}T[\!:\!u-2v]\\
		T[u+2v:2u]&=x^{u+2v}+x^{4v}T[u-2v\!:\!2u-4v]
	\end{align*}
	That the above five identities hold for all $n\geq 2$, $u=(2^{n+3}+1)/3$,
	and $v=2^n$ is equivalent to the following identities:
	\begin{align}
		\label{eq:tf}T[[10w]_2]&=1+T[[1w]_2] &\forall w\in L_0\\
		T[[10w]_2]&=T[[1w]_2] &\forall w\in L_1\\
		T[[11w]_2]&=T[[10w]_2]+T[[w]_2] &\forall w\in L_2\\
		T[[11w]_2]&=1+T[[w]_2]+T[[1w]_2] &\forall w\in L_0\\
		T[[11w]_2]&=T[[w]_2]+T[[1w]_2] &\forall w\in L_1\\
		T[[100w]_2]&=T[[10w]_2]+T[[1w]_2]+T[[w]_2] &\forall w\in L_2\\
		T[[100w]_2]&=1+T[[w]_2]&\forall w \in L_0\\
		T[[100w]_2]&=T[[w]_2]&\forall w\in L_1\\
		T[[10w]_2]&=T[[w]_2]&\forall w\in L_3\\
		\label{eq:tl}	T[[10w]_2]&=T[[w]_2]&\forall w\in L_4
	\end{align}
	where
	\begin{align*}
		L_0&=L((10)^*11),\\
		L_1&=L((10)^*11\{00,01,10,11\}^+),\\
		L_2&=L((10)^*0\{0,1\} \{00,01,10,11\}^*+ (10)^+),\\
		L_3&=L((10)^+0\{00,01,10,11\}^+),\\
		L_4&=L((10)^+\{0,1\}).
	\end{align*}
	From the minimal polynomial and the first terms of $T$, we find its minimal
	automaton. Its transition function $\delta$ 
($\Lambda(n):=[\delta(n,0),\delta(n,1)]$)	
	and output function $\tau$
	are as follows:

{
\renewcommand{\arraystretch}{1.2}

\begin{longtable}{| c c  |  c c  |  c c  |  c c  |  c c  |}
\hline
$n$ & $\Lambda(n)$ & $n$ & $\Lambda(n)$ & $n$ & $\Lambda(n)$ & $n$ & $\Lambda(n)$ & $n$ & $\Lambda(n)$ \\
\hline
0 & [1, 2]& 6 & [11, 12]& 12 & [15, 20]& 18 & [23, 12]& 24 & [27, 15]\\
1 & [3, 4]& 7 & [13, 14]& 13 & [9, 21]& 19 & [24, 23]& 25 & [15, 18]\\
2 & [5, 6]& 8 & [5, 15]& 14 & [14, 14]& 20 & [14, 26]& 26 & [17, 20]\\
3 & [7, 8]& 9 & [16, 14]& 15 & [22, 10]& 21 & [14, 16]& 27 & [28, 14]\\
4 & [9, 10]& 10 & [17, 18]& 16 & [23, 9]& 22 & [5, 17]& 28 & [27, 17]\\
5 & [8, 9]& 11 & [19, 6]& 17 & [24, 25]& 23 & [16, 16]&  & \\
\hline
\end{longtable}

}

{
\renewcommand{\arraystretch}{1.2}
$$
\begin{array}{| c c  |  c c  |  c c  |  c c  |  c c  |  c c  |}
\hline
n & \tau(n) & n & \tau(n) & n & \tau(n) & n & \tau(n) & n & \tau(n) & n & \tau(n) \\
\hline
0 & 1& 5 & 1& 10 & 0& 15 & 1& 20 & 0& 25 & 1\\
1 & 1& 6 & 0& 11 & 0& 16 & 1& 21 & 0& 26 & 0\\
2 & 1& 7 & 1& 12 & 1& 17 & 0& 22 & 1& 27 & 0\\
3 & 1& 8 & 1& 13 & 1& 18 & 1& 23 & 1& 28 & 0\\
4 & 1& 9 & 1& 14 & 0& 19 & 0& 24 & 0&  & \\
\hline
\end{array}
$$
}

	Let $A(s,w)$ denote the state reached after reading $w$ from right
	to left starting from the state $s$.
	For $j=0,1,\ldots, 4$, define
	$$E_j=\{ A(0,w)\mid w\in L_j\}.$$
	We can compute $E_j$ explicitly and find 
	\begin{align*}
		E_0&=\{6\} \\
		E_1&=\{14,15,16,17,18,20\} \\
		E_2&=\{3, 4, 5, 13, 14, 15, 16, 17, 19, 27\} \\
		E_3&= \{9, 14, 21, 23\}\\
		E_4&= \{8,9\}.
	\end{align*}
  Equations \eqref{eq:tf} through \eqref{eq:tl} can be written as
	\begin{align}
		\label{eq:tauf}	\tau(A(s,10))&=1+\tau(A(s,1)) &\forall s\in E_0\\
		\tau(A(s,10))&=\tau(A(s,1)) &\forall s\in E_1\\
		\tau(A(s,11))&=\tau(A(s,10))+\tau(s) &\forall s\in E_2\\
		\tau(A(s,11))&=1+\tau(s)+\tau(A(s,1)) &\forall s\in E_0\\
		\tau(A(s,11))&=\tau(s)+\tau(A(s,1)) &\forall s\in E_1\\
		\tau(A(s,100))&=\tau(A(s,10))+\tau(A(s,1))+\tau(s) &\forall s\in E_2\\
		\tau(A(s,100))&=1+\tau(s)&\forall s \in E_0\\
		\tau(A(s,100))&=\tau(s)&\forall s\in E_1\\
		\tau(A(s,10))&=\tau(s)&\forall s\in E_3\\
		\label{eq:taul}		\tau(A(s,10))&=\tau(s)&\forall s\in E_4
	\end{align}
	We verify equations \eqref{eq:tauf} through \eqref{eq:taul} directly.
\end{proof}
\begin{Lemma}\label{lem:3}
	For $n\geq 2$ even, $u=(2^{n+3} + 1)/3$, $ v=2^n$, 
	\begin{align*}
	A_{n}&=A^e[:u]+x^v\cdot A^e[:u-v]+x^u\cdot \begin{pmatrix}1&1\\0&1\end{pmatrix}\\
	B_{n}&=B^e[:u]+x^v\cdot B^e[:u-v]
	\end{align*}
	For $n\geq 1$ odd, $u=(2^{n+3} + 2)/3$, $ v=2^n$, 
	\begin{align*}
	A_{n}&=A^o[:u]+x^v\cdot A^o[:u-v]\\
	B_{n}&=B^o[:u]+x^v\cdot B^o[:u-v]+x^u\cdot I_2
	\end{align*}
\end{Lemma}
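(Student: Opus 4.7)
The plan is to mimic the proof structure of Lemma \ref{lem:2} and proceed by induction on $n$. Call the four identities in the statement by the names $A_n$ and $B_n$, with the understanding that $A_n$ refers to the even or odd identity according to the parity of $n$, and similarly for $B_n$. The base cases $n = 1$ (odd) and $n = 2$ (even) are verified directly from the matrix definitions of $A_n, B_n$ and the first terms of $A^e, B^e, A^o, B^o$.

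For the inductive step, I would establish
\[
A_n \wedge B_n \;\Longrightarrow\; A_{n+1} \wedge B_{n+1}
\]
for $n \geq 2$, in both parities. Take for example the implication $A_n \wedge B_n \Rightarrow A_{n+1}$ when $n$ is even, and set $u = (2^{n+3}+1)/3$, $v = 2^n$. By the induction hypothesis and the recurrence $A_{n+1} = B_n A_n$,
\begin{align*}
A_{n+1} &= \bigl( B^e[:u] + x^v B^e[:u - v] \bigr) \\
&\quad \cdot \left( A^e[:u] + x^v A^e[:u-v] + x^u \begin{pmatrix}1&1\\0&1\end{pmatrix} \right).
\end{align*}
The target right-hand side $A^o[:u'] + x^{v'} A^o[:u' - v']$ with $u' = 2u$ and $v' = 2v$ is a polynomial of degree less than $u'$, so it suffices to show that the two expressions are congruent modulo $x^{u'}$.

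To that end, I would use Lemma \ref{lem:00} to replace $A^o$ by $B^e \cdot A^e$ in the target, and then apply Lemma \ref{lem:4} (for the same $n$, which is even) to expand $A^e[:u']$ and $B^e[:u']$ as combinations of the blocks $A^e[:u], A^e[:u-v], B^e[:u], B^e[:u-v]$ together with the explicit correction matrices. After this substitution, both sides become polynomial expressions in these formal blocks and in $x$; as in the proof of Lemma \ref{lem:tmncf1}, the products of $A^e$-blocks with $B^e$-blocks appear in the same order on both sides, so each block may be regarded as a formal noncommuting symbol and the required congruence reduces to a finite polynomial identity that one checks by computer algebra. The three remaining implications ($B_{n+1}$ in the even case and both $A_{n+1}, B_{n+1}$ in the odd case) follow the same template, with the roles of $A^e/B^e$ and $A^o/B^o$ interchanged and the appropriate identity from Lemma \ref{lem:00} invoked.

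The hardest part will be the bookkeeping of the correction matrices in Lemmas \ref{lem:3} and \ref{lem:4}: they carry off-diagonal entries such as $x^{2u-1} + x^{2u-4}$ and inhomogeneous pieces of the form $x^{u+kv}\begin{pmatrix}1&1\\0&1\end{pmatrix}$, and they differ between parities and between the $A$- and $B$-cases; their contributions under matrix multiplication must be carefully tracked and truncated modulo $x^{u'}$. Fortunately, only finitely many cross terms can contribute below degree $u'$, so the resulting symbolic check is tractable. Once all four implications are established, the induction closes, and Theorem \ref{thm:pd2} follows from Lemma \ref{lem:3} together with the defining polynomials of $A^e$ by the same argument as in the proof of Theorem \ref{th:ab}.
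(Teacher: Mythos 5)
Your overall strategy coincides with the paper's: induction on $n$ with base cases $n=1,2$, the recurrences $A_{n+1}=B_nA_n$ and $B_{n+1}=A_nA_n$ combined with the induction hypothesis on the left-hand side, Lemma~\ref{lem:00} and Lemma~\ref{lem:4} to rewrite the right-hand side, and a degree bound reducing everything to a congruence modulo $x^{2u}$. Up to the final verification, this is exactly what the paper does.

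The step that does not go through as you describe it is the reduction of that final check to ``a finite polynomial identity'' in formal non-commuting blocks. In Lemma~\ref{lem:tmncf1} this works because every cut point is a multiple of a single scale $v$, so the slices $W^e[nv\!:\!(n+1)v]$ really are independent symbols. Here $u=(2^{n+3}+1)/3$ is not a multiple of $v=2^n$ and the breakpoints interleave ($u<3v<u+v<4v<u+2v<2u$). After substituting Lemma~\ref{lem:4}, the right-hand side contains $x^{4v}A^e[:\!2u-4v]$ and $x^{4v}B^e[:\!2u-4v]$, blocks that never appear on the left and whose contributions have valuation $4v<2u$, so they do not disappear for degree reasons; treated as independent symbols the identity is false. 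Their cancellation uses the truncation relations $T[:\!u]\equiv T[:\!2u-4v]\pmod{x^{2u-4v}}$, which make the two cross terms collapse to the same quantity and cancel in characteristic $2$. Similarly, the vanishing modulo $x^{2u}$ of the correction term $\left(\begin{smallmatrix}x^{2u-1}&x^{2u-1}+x^{2u-4}\\0&x^{2u-1}\end{smallmatrix}\right)\cdot A^e[:\!2u]$ is not a degree count: it needs the actual low-order coefficients of $A^e$ (the constant term $1$ of $A^e_{0,0}$ and the leading term $x^{3}$ of $A^e_{1,0}$ produce two copies of $x^{2u-1}$ that cancel). This is why the paper carries out the verification by explicit manipulation of the series rather than by the formal-symbol trick of Lemma~\ref{lem:tmncf1}; your computer check would have to be supplemented with these truncation and leading-coefficient relations before it establishes anything.
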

\begin{proof}
	For $n\geq 2$ even, we prove that 
	$$A_n \wedge B_n \Rightarrow A_{n+1}.$$
	Set $u=(2^{n+3} + 1)/3$, $ v=2^n$. Identity $A_{n+1}$ can be written as
	\begin{equation}\label{eq:anp1}
		A_{n+1}=A^o[:2u]+x^{2v}\cdot A^o[2u-2v]
	\end{equation}
	The left side of Eq. \eqref{eq:anp1} is
	\begin{align*}
		&\quad B_n A_n\\
		&=(B^e[:u]+x^v\cdot B^e[:u-v])\left(A^e[:u]+x^v\cdot A^e[:u-v]+x^u\cdot \begin{pmatrix}1&1\\0&1\end{pmatrix}\right)\\
		&=(B^e[:u]+x^v\cdot B^e[:u-v])(A^e[:u]+x^v\cdot A^e[:u-v])\\
			&\quad x^uB^e[:u]\begin{pmatrix}1&1\\0&1\end{pmatrix}+x^{u+v}B^e[:u-v]\begin{pmatrix}1&1\\0&1\end{pmatrix}
	\end{align*}
	By lemma \ref{lem:00}, the right side of Eq. \eqref{eq:anp1} is congruent, modulo $x^{2u}$ to
		$$(1+x^{2v})A^o[:2u]= (1+x^{2v})B^e[:2u]A^e[:2u].$$
		We recall that
	\begin{align*}
		A^e[:2u]&=(1+x^v+x^{2v})\cdot (A^e[:u]+x^v A^e[:u-v])+x^{4v}A^e[:2u-4v]+\\
		&\quad  (x^u+x^{u+v}+x^{u+2v})\begin{pmatrix}1&1\\0&1\end{pmatrix},\\
		B^e[:2u]&=(1+x^v+x^{2v})\cdot (B^e[:u]+x^v B^e[:u-v])+x^{4v}B^e[:2u-4v]+\\
		&\quad  \begin{pmatrix}x^{2u-1}&x^{2u-1}+x^{2u-4}\\0&x^{2u-1}\end{pmatrix}.
	\end{align*}
	Noticing that
		$$ \begin{pmatrix}x^{2u-1}&x^{2u-1}+x^{2u-4}\\0&x^{2u-1}\end{pmatrix} A^e[:2u]\equiv 0 \mod x^{2u},$$
			and 
			$$(1+x^{2v})(1+x^v+x^{2v})^2=1+x^{6v}\equiv 0\mod x^{2u},$$
			we have
	\begin{align*}
		&\quad (1+x^{2v})\cdot B^e[:2u]A^e[:2u]\\
		&\equiv(1+x^{2v})\cdot\left((1+x^v+x^{2v})\cdot (B^e[:u]+x^v B^e[:u-v])+x^{4v}B^e[:2u-4v]\right)\\
			&\quad ((1+x^v+x^{2v})\cdot (A^e[:u]+x^v A^e[:u-v])+x^{4v}A^e[:2u-4v])\\
		&\quad(1+x^{2v})\cdot\left((1+x^v+x^{2v})\cdot (B^e[:u]+x^v B^e[:u-v])+x^{4v}B^e[:2u-4v]\right)\\
		&\quad  (x^u+x^{u+v}+x^{u+2v})\begin{pmatrix}1&1\\0&1\end{pmatrix}\\
		&\equiv (B^e[:u]+x^v B^e[:u-v])\cdot(A^e[:u]+x^v A^e[:u-v])\\
		&\quad x^u\cdot (B^e[:u]+x^v B^e[:u-v])\cdot
		\begin{pmatrix}1&1\\0&1\end{pmatrix} \mod x^{2u}.
	\end{align*}
	Thus we have proved that both sides of Eq. \eqref{eq:anp1} are congruent
	modulo $x^{2u}$, so that they must be equal as both have degree at
	most $2u$.
\end{proof}
	Theorem \ref{thm:pd1} can be derived from lemma \ref{lem:3} and the definition
	of $A^e$, using the same method as in the proof of theorem \ref{th:ab}.


\section{From equation to automaton}\label{sec:eq2kern}
In this section we give an description of the algorithm that we use to calculate
a $p$-automaton of an algebraic series $T(x)$ 
in $\mathbb{F}_{q}[[x]]$
from an annihilating polynomial of it, where $\mathbb{F}_q$ is a finite field
of characteristic $p$. The algorithm is based on the
proof of theorem 1 in  \cite{Christol1980KMFR}.

\medskip
\noindent
{\bf Step one: Normalization}\\
Input: an annihilating polynomial $P(x,y)\in \mathbb{F}_{q}(x)[y]$ of $T(x)$.\\
Output: an annihilating polynomial $Q(x,y)\in \mathbb{F}_{q}(x)[y]$ of $T(x)$ 
the form
$$y+\frac{a_1(x)}{b_1(x)}y^{p^1}+\frac{a_2(x)}{b_2(x)}y^{p^2}+\cdots+
\frac{a_n(x)}{b_n(x)}y^{p^n}.$$
Method:
use the relation $P(x,T(x))=0$ to express $T(x)^{p^j}$ as 
$\mathbb{F}_{p}(x)$-linear combination of $T(x)^k$, $k=0,1,\ldots, d-1$, 
where $d$ is the degree of $P(x,y)$ as a polynomial in $y$. In practice,
to find the expression of $T(x)^{p^j}$, we first calculate that of
$T(x)^{p^{j-1}}$, then raise it to the $p$-th power, and finally reduce
again using the relation $P(x,T(x))=0$.

We know that the family $T(x)^{p^j}$, 
$j=0,1,\ldots, d$ is necessarily linearly dependent. 
However, as it can be
costly to compute $T(x)^{p^j}$ when $j$ is large, in reality we stop once
the rank of the family $T(x)^{p^j}$, $k=0,1,\ldots, j_0$ is less than $j_0+1$.

\medskip
\noindent
{\bf Step two: From normalized equation to kernel}\\
Input: the relation
\begin{equation}\label{eq:rel}
	T(x)=\frac{a_1(x)}{b_1(x)}T(x)^{p^1}+\frac{a_2(x)}{b_2(x)}T(x)^{p^2}+\cdots+
\frac{a_n(x)}{b_n(x)}T(x)^{p^n}.
\end{equation}
Output: the $p$-kernel of $T(x)$.\\
Method: 
We let $\phi$ denote the Frobenius morphism and $\Lambda_j$ the Cartier operator
that maps $\sum a_l x^l$ to $\sum a_{pl+j} x^l$ for $j=0,1,\ldots, p-1$.
We recall that for a series $f(x)=\sum_{l\geq l_0} c_lx^l\in \mathbb{F}_q((x))$ 
and polynomials $a(x)$ and $b(x)$, 
$$\Lambda_j(a(x)f(x)^p)=\Lambda_j(a(x))\Lambda_0 (f(x)^p)$$
for $j=0,1,\ldots,p$ and
$$\Lambda_0(f(x)^p)=\Lambda_0 \sum_{l\geq l_0} c_l^p x^{p\cdot l}=\sum_{l\geq l_0} c_l^p x^l=\phi(f)(x).$$ 
Combining the above two identities and we get
\begin{equation}\label{eq:rule}
\Lambda_j\left(\frac{a(x)}{b(x)}f(x)^p\right)
=\Lambda_j\left(a(x)b(x)^{p-1} \frac{f(x)^p}{b(x)^p}\right)
=\Lambda_j(a(x)b(x)^{p-1})\frac{ \phi(f)(x)}{\phi(b)(x)}.
\end{equation}

When we apply repeatedly $\Lambda_j$, $j=0,1, \ldots, p-1$ 
to both sides of \eqref{eq:rel} using the above computation rule and rewrite
$\phi^k(T)(x)$ using relation \eqref{eq:rel}, 
we always get an expression of the form (this will be illustrated by 
example \ref{eg:ker} below)
\begin{equation}\label{eq:ker2}
	\frac{c_1(x)}{d_1(x)}\phi^k(T)(x)^{p^1}+\frac{c_2(x)}{d_2(x)}\phi^k(T)(x)^{p^2}+\cdots+
\frac{c_n(x)}{d_n(x)}\phi^k(T)(x)^{p^n},
\end{equation}
where $k=0,1,\ldots, \log q/\log p-1$, and $c_j(x)$ and $d_j(x)$ are polynomial 
of bounded degree for $j=1,2,\ldots, n$. 
To see the last point, note that for $d_j(x)$ is always a factor of 
$$\prod_{l=1}^{n} \phi^k(b_l(x))$$
for some $0\leq k < \log q/\log p$, 
and 
$$
\deg c_j\leq \deg d_j +
\max \{\deg a_l -\deg b_l\mid l=1,2,\ldots, n\}.
$$
The set of expression of the form \eqref{eq:ker2} is therefore finite and
the process must terminate. In the end we get a finite set that is the
$p$-kernel of $T(x)$.

In our program, the expression \eqref{eq:ker2} is encoded by the tuple
$$
\left(\left \{1: {c_1(x)}/{d_1(x)}, 2:{c_2(x)}/{d_2(x)}, \ldots, 
n: {c_n(x)}/{d_n(x)}\right\}, k\right).
$$
\begin{Remark}
Note that the reason we use powers of $p$ instead of powers of $q$ in 
\eqref{eq:rel} is that the latter usually needs much larger coefficients.
\end{Remark}
\medskip

\medskip
\begin{Example}\label{eg:ker}
Set $a=\bar{u}\in \mathbb{F}_4=\mathbb{F}_2[u]/<u^2+u+1>$.
Let $T(x)$ be the unique solution in $\mathbb{F}_4[[x]]$ of  
$$(x^2+ax)y^3+y+a+x=0.$$
We write the equation in the normalized form, which is really easy for this
example:
\begin{equation}\label{eq:eg}
	T(x)=\frac{1}{x+a}T(x)^2+x T(x)^4.
\end{equation}
	We use computation rule \eqref{eq:rule} to calculate $\Lambda_0 T(x)$ and $\Lambda_0\Lambda_0 T(x)$ to illustrate 
of this process:
\begin{align*}
	\Lambda_0 T(x)&=\Lambda_0(\frac{1}{x+a}T(x)^2)+\Lambda_0(x T(x)^4)\\
	&=\Lambda_0\left((x+a)\frac{T(x)^2}{(x+a)^2}\right)
	+\Lambda_0(x)\cdot \phi(T)(x)^2\\
	&=a \frac{\phi(T)(x)}{x+a+1}.
\end{align*}
	To calculate $\Lambda_0\Lambda_0 T(x)$, we need to first put the above
	expression into form \eqref{eq:ker2}. Applying $\phi$ to both sides of 
	\eqref{eq:eg} we get
\begin{equation*}
	\phi(	T)(x)=\frac{1}{x+a+1}\phi(T)(x)^2+x \phi(T)(x)^4.
\end{equation*}
Therefore 
\begin{align*}
	\Lambda_0 T(x)&=\frac{a}{(x+a+1)^2}\phi(T)(x)^2+\frac{ax}{(x+a+1)}
	\phi(T)(x)^4,
\end{align*}
and 
\begin{align*}
\Lambda_0	\Lambda_0 T(x)&=\Lambda_0\left(\frac{a}{(x+a+1)^2}\phi(T)(x)^2\right)
	+\Lambda_0\left(\frac{ax(x+a+1)}{(x+a+1)^2} \phi(T)(x)^4, \right)\\
	&=\frac{a}{x+a} \phi^2(T)(x)+ \frac{a}{x+a} \phi^2(T)(x)^2\\
	&=\frac{a}{x+a} T(x)+ \frac{ax}{x+a}T(x)^2.\\
\end{align*}
	In our program, the series $T(x)$, $\Lambda_0 T(x)$ and $\Lambda_0\Lambda_0
	T(x)$ are encoded by 
	$$(\{0:1\},\; 0),$$
	$$(\{0:a/(x + a + 1)\},\;1),$$
	and 
	$$(\{0: a/(x + a),\; 1: ax/(x + a)\},\; 0).$$

\end{Example}

\medskip
\noindent
{\bf Step three: Output function}
In step two, each element from the $p$-kernel of $T(x)$ is expressed as an 
$\mathbb{F}_{q}(x)$-linear combination of powers of $\phi^k(T)(x)$, for 
some $0\leq k< \log q/\log p$.  The output
function maps the corresponding state to the constant term of the series.
To calculate it, we simply plug in $T(x) \mod x^{D+1}$, where $D$ is the maximum
of $0$ and the minus of the orders of the coefficients of the 
linear combination.

\newpage
	\section{Annex}

\subsection{Data for Section \ref{section:tmncf}}\label{data:tmncf}
All $16$ polynomials are of the form  
$$p_0(x)+p_3(x)y^3+p_6(x)y^6+p_9(x)y^9+p_{12}(x)y^{12}.$$
The coefficients $p_j(x)$ and the $8$ initial terms to determine the solutions uniquely
are given below.

For $\phi(M^e,0,0)$:
\begin{align*}
p_{0}(x)&=x^{66} +x^{64} +x^{62} +x^{60} +x^{58} +x^{56} +x^{52} +x^{50} +x^{36} +x^{32} +x^{30}\\
& \quad +x^{20} +x^{16} +x^{14} +x^{12} ,\\
p_{3}(x)&=x^{62} +x^{60} +x^{58} +x^{56} +x^{52} +x^{50} +x^{48} +x^{44} +x^{42} +x^{38} +x^{36}\\
& \quad +x^{32} +x^{28} +x^{22} +x^{20} +x^{18} +x^{14} +x^{12} ,\\
p_{6}(x)&=x^{56} +x^{44} +x^{40} +x^{38} +x^{36} +x^{32} +x^{30} +x^{26} +x^{20} +x^{18} +x^{16}\\
& \quad +x^{14} +x^2 +1 ,\\
p_{9}(x)&=x^{64} +x^{62} +x^{58} +x^{56} +x^{54} +x^{52} +x^{48} +x^{42} +x^{32} +x^{30} +x^{26}\\
& \quad +x^{20} +x^{18} +x^{14} +x^{12} +x^{10} +x^8 +x^2 ,\\
p_{12}(x)&=x^{64} +x^{56} +x^{40} +x^{32} +x^{16} +x^8 +1 ,
\end{align*}
and the initial terms are [1, 0, 0, 0, 0, 0, 0, 0].

 For $\phi(M^e,0,1)$:
\begin{align*}
p_{0}(x)&=x^{63} +x^{62} +x^{60} +x^{59} +x^{56} +x^{55} +x^{53} +x^{51} +x^{50} +x^{49} +x^{48}\\
& \quad +x^{47} +x^{45} +x^{44} +x^{42} +x^{41} +x^{39} +x^{37} +x^{36} +x^{35} +x^{34} +x^{32}\\
& \quad +x^{28} +x^{27} +x^{25} +x^{24} +x^{23} +x^{21} +x^{20} +x^{18} +x^{15} ,\\
p_{3}(x)&=x^{61} +x^{59} +x^{51} +x^{49} +x^{43} +x^{41} +x^{37} +x^{33} +x^{21} +x^{19} +x^{13} +x^9 ,\\
p_{6}(x)&=x^{60} +x^{59} +x^{57} +x^{55} +x^{54} +x^{52} +x^{51} +x^{49} +x^{48} +x^{46} +x^{45}\\
& \quad +x^{43} +x^{42} +x^{32} +x^{31} +x^{29} +x^{28} +x^{26} +x^{25} +x^{24} +x^{22} +x^{21}\\
& \quad +x^{19} +x^{18} +x^{16} +x^{15} +x^{13} +x^{12} +x^{10} +x^9 +x^7 +x^6 ,\\
p_{9}(x)&=x^{59} +x^{53} +x^{51} +x^{49} +x^{47} +x^{45} +x^{43} +x^{41} +x^{37} +x^{29} +x^{27}\\
& \quad +x^{25} +x^{21} +x^{15} +x^{13} +x^7 +x^5 +x^3 ,\\
p_{12}(x)&=x^{58} +x^{57} +x^{56} +x^{54} +x^{53} +x^{52} +x^{50} +x^{49} +x^{48} +x^{42} +x^{41}\\
& \quad +x^{40} +x^{38} +x^{37} +x^{36} +x^{34} +x^{33} +x^{32} +x^{10} +x^9 +x^8 +x^6 +x^5\\
& \quad +x^4 +x^2 +x +1 ,
\end{align*}
and the initial terms are [0, 1, 0, 0, 1, 1, 0, 0].

 For $\phi(M^e,1,0)$:
\begin{align*}
p_{0}(x)&=x^{63} +x^{62} +x^{60} +x^{59} +x^{56} +x^{55} +x^{53} +x^{51} +x^{50} +x^{49} +x^{48}\\
& \quad +x^{47} +x^{45} +x^{44} +x^{42} +x^{41} +x^{39} +x^{37} +x^{36} +x^{35} +x^{34} +x^{32}\\
& \quad +x^{28} +x^{27} +x^{25} +x^{24} +x^{23} +x^{21} +x^{20} +x^{18} +x^{15} ,\\
p_{3}(x)&=x^{61} +x^{59} +x^{51} +x^{49} +x^{43} +x^{41} +x^{37} +x^{33} +x^{21} +x^{19} +x^{13} +x^9 ,\\
p_{6}(x)&=x^{60} +x^{59} +x^{57} +x^{55} +x^{54} +x^{52} +x^{51} +x^{49} +x^{48} +x^{46} +x^{45}\\
& \quad +x^{43} +x^{42} +x^{32} +x^{31} +x^{29} +x^{28} +x^{26} +x^{25} +x^{24} +x^{22} +x^{21}\\
& \quad +x^{19} +x^{18} +x^{16} +x^{15} +x^{13} +x^{12} +x^{10} +x^9 +x^7 +x^6 ,\\
p_{9}(x)&=x^{59} +x^{53} +x^{51} +x^{49} +x^{47} +x^{45} +x^{43} +x^{41} +x^{37} +x^{29} +x^{27}\\
& \quad +x^{25} +x^{21} +x^{15} +x^{13} +x^7 +x^5 +x^3 ,\\
p_{12}(x)&=x^{58} +x^{57} +x^{56} +x^{54} +x^{53} +x^{52} +x^{50} +x^{49} +x^{48} +x^{42} +x^{41}\\
& \quad +x^{40} +x^{38} +x^{37} +x^{36} +x^{34} +x^{33} +x^{32} +x^{10} +x^9 +x^8 +x^6 +x^5\\
& \quad +x^4 +x^2 +x +1 ,
\end{align*}
and the initial terms are [0, 1, 0, 0, 1, 1, 0, 0].

 For $\phi(M^e,1,1)$:
\begin{align*}
p_{0}(x)&=x^{60} +x^{56} +x^{54} +x^{50} +x^{48} +x^{44} +x^{38} +x^{34} +x^{30} +x^{28} +x^{26}\\
& \quad +x^{20} +x^{18} ,\\
p_{3}(x)&=x^{54} +x^{50} +x^{44} +x^{38} +x^{28} +x^{20} +x^{18} +x^{12} +x^{10} +x^6 ,\\
p_{6}(x)&=x^{54} +x^{52} +x^{50} +x^{38} +x^{36} +x^{24} +x^{18} +x^{16} +x^{10} +x^6 +x^4 +1 ,\\
p_{9}(x)&=x^{52} +x^{44} +x^{42} +x^{40} +x^{30} +x^{24} +x^{22} +x^{20} +x^{16} +x^{10} +x^8 +x^6\\
& \quad +x^4 +x^2 ,\\
p_{12}(x)&=x^{52} +x^{50} +x^{48} +x^{36} +x^{34} +x^{32} +x^4 +x^2 +1 ,
\end{align*}
and the initial terms are [0, 0, 1, 0, 0, 0, 0, 0].

 For $\phi(W^e,0,0)$:
\begin{align*}
p_{0}(x)&=x^{60} +x^{50} +x^{44} +x^{40} +x^{30} +x^{28} +x^{24} +x^{18} +x^{12} ,\\
p_{3}(x)&=x^{54} +x^{44} +x^{34} +x^{28} +x^{26} +x^{22} +x^{20} +x^{14} +x^{12} +x^{10} ,\\
p_{6}(x)&=x^{54} +x^{52} +x^{48} +x^{46} +x^{42} +x^{38} +x^{34} +x^{32} +x^{30} +x^{22} +x^{20}\\
& \quad +x^{18} +x^{14} +x^{12} +x^6 +1 ,\\
p_{9}(x)&=x^{52} +x^{44} +x^{42} +x^{40} +x^{30} +x^{24} +x^{22} +x^{20} +x^{16} +x^{10} +x^8 +x^6\\
& \quad +x^4 +x^2 ,\\
p_{12}(x)&=x^{52} +x^{50} +x^{48} +x^{36} +x^{34} +x^{32} +x^4 +x^2 +1 ,
\end{align*}
and the initial terms are [1, 0, 1, 0, 1, 0, 0, 0].

 For $\phi(W^e,0,1)$:
\begin{align*}
p_{0}(x)&=x^{63} +x^{60} +x^{59} +x^{56} +x^{52} +x^{51} +x^{50} +x^{47} +x^{43} +x^{39} +x^{36}\\
& \quad +x^{35} +x^{28} +x^{25} +x^{22} +x^{20} +x^{18} +x^{16} +x^{15} ,\\
p_{3}(x)&=x^{61} +x^{59} +x^{51} +x^{49} +x^{43} +x^{41} +x^{37} +x^{33} +x^{21} +x^{19} +x^{13} +x^9 ,\\
p_{6}(x)&=x^{60} +x^{59} +x^{57} +x^{55} +x^{54} +x^{52} +x^{51} +x^{49} +x^{48} +x^{46} +x^{45}\\
& \quad +x^{43} +x^{42} +x^{32} +x^{31} +x^{29} +x^{28} +x^{26} +x^{25} +x^{24} +x^{22} +x^{21}\\
& \quad +x^{19} +x^{18} +x^{16} +x^{15} +x^{13} +x^{12} +x^{10} +x^9 +x^7 +x^6 ,\\
p_{9}(x)&=x^{59} +x^{53} +x^{51} +x^{49} +x^{47} +x^{45} +x^{43} +x^{41} +x^{37} +x^{29} +x^{27}\\
& \quad +x^{25} +x^{21} +x^{15} +x^{13} +x^7 +x^5 +x^3 ,\\
p_{12}(x)&=x^{58} +x^{57} +x^{56} +x^{54} +x^{53} +x^{52} +x^{50} +x^{49} +x^{48} +x^{42} +x^{41}\\
& \quad +x^{40} +x^{38} +x^{37} +x^{36} +x^{34} +x^{33} +x^{32} +x^{10} +x^9 +x^8 +x^6 +x^5\\
& \quad +x^4 +x^2 +x +1 ,
\end{align*}
and the initial terms are [0, 0, 1, 1, 1, 1, 0, 1].

 For $\phi(W^e,1,0)$:
\begin{align*}
p_{0}(x)&=x^{63} +x^{60} +x^{59} +x^{56} +x^{52} +x^{51} +x^{50} +x^{47} +x^{43} +x^{39} +x^{36}\\
& \quad +x^{35} +x^{28} +x^{25} +x^{22} +x^{20} +x^{18} +x^{16} +x^{15} ,\\
p_{3}(x)&=x^{61} +x^{59} +x^{51} +x^{49} +x^{43} +x^{41} +x^{37} +x^{33} +x^{21} +x^{19} +x^{13} +x^9 ,\\
p_{6}(x)&=x^{60} +x^{59} +x^{57} +x^{55} +x^{54} +x^{52} +x^{51} +x^{49} +x^{48} +x^{46} +x^{45}\\
& \quad +x^{43} +x^{42} +x^{32} +x^{31} +x^{29} +x^{28} +x^{26} +x^{25} +x^{24} +x^{22} +x^{21}\\
& \quad +x^{19} +x^{18} +x^{16} +x^{15} +x^{13} +x^{12} +x^{10} +x^9 +x^7 +x^6 ,\\
p_{9}(x)&=x^{59} +x^{53} +x^{51} +x^{49} +x^{47} +x^{45} +x^{43} +x^{41} +x^{37} +x^{29} +x^{27}\\
& \quad +x^{25} +x^{21} +x^{15} +x^{13} +x^7 +x^5 +x^3 ,\\
p_{12}(x)&=x^{58} +x^{57} +x^{56} +x^{54} +x^{53} +x^{52} +x^{50} +x^{49} +x^{48} +x^{42} +x^{41}\\
& \quad +x^{40} +x^{38} +x^{37} +x^{36} +x^{34} +x^{33} +x^{32} +x^{10} +x^9 +x^8 +x^6 +x^5\\
& \quad +x^4 +x^2 +x +1 ,
\end{align*}
and the initial terms are [0, 0, 1, 1, 1, 1, 0, 1].

 For $\phi(W^e,1,1)$:
\begin{align*}
p_{0}(x)&=x^{66} +x^{60} +x^{58} +x^{56} +x^{52} +x^{48} +x^{42} +x^{40} +x^{36} +x^{32} +x^{30}\\
& \quad +x^{28} +x^{26} +x^{20} +x^{18} ,\\
p_{3}(x)&=x^{58} +x^{50} +x^{46} +x^{44} +x^{42} +x^{38} +x^{36} +x^{32} +x^{30} +x^{24} +x^{20}\\
& \quad +x^{18} +x^8 +x^6 ,\\
p_{6}(x)&=x^{62} +x^{56} +x^{52} +x^{46} +x^{40} +x^{38} +x^{36} +x^{32} +x^{26} +x^{22} +x^{18}\\
& \quad +x^{16} +x^6 +x^4 +x^2 +1 ,\\
p_{9}(x)&=x^{64} +x^{62} +x^{58} +x^{56} +x^{54} +x^{52} +x^{48} +x^{42} +x^{32} +x^{30} +x^{26}\\
& \quad +x^{20} +x^{18} +x^{14} +x^{12} +x^{10} +x^8 +x^2 ,\\
p_{12}(x)&=x^{64} +x^{56} +x^{40} +x^{32} +x^{16} +x^8 +1 ,
\end{align*}
and the initial terms are [0, 0, 0, 0, 1, 0, 0, 0].

 For $\phi(M^o,0,0)$:
\begin{align*}
p_{0}(x)&=x^{63} +x^{62} +x^{61} +x^{60} +x^{59} +x^{57} +x^{55} +x^{53} +x^{51} +x^{50} +x^{49}\\
& \quad +x^{47} +x^{45} +x^{44} +x^{42} +x^{40} +x^{39} +x^{37} +x^{36} +x^{31} +x^{30} +x^{28}\\
& \quad +x^{27} +x^{26} +x^{24} +x^{23} +x^{21} +x^{18} +x^{17} +x^{15} +x^{14} +x^{13} +x^{12} ,\\
p_{3}(x)&=x^{62} +x^{59} +x^{58} +x^{56} +x^{55} +x^{53} +x^{52} +x^{51} +x^{46} +x^{45} +x^{41}\\
& \quad +x^{39} +x^{38} +x^{37} +x^{36} +x^{34} +x^{32} +x^{31} +x^{30} +x^{28} +x^{27} +x^{25}\\
& \quad +x^{23} +x^{20} +x^{18} +x^{17} +x^{16} +x^{13} +x^{11} +x^{10} +x^9 +x^8 ,\\
p_{6}(x)&=x^{62} +x^{59} +x^{58} +x^{57} +x^{54} +x^{52} +x^{51} +x^{48} +x^{44} +x^{42} +x^{41}\\
& \quad +x^{37} +x^{36} +x^{35} +x^{33} +x^{31} +x^{30} +x^{29} +x^{26} +x^{24} +x^{23} +x^{18}\\
& \quad +x^{17} +x^{14} +x^{13} +x^{12} +x^{10} +x^8 +x^7 +x^2 +x +1 ,\\
p_{9}(x)&=x^{62} +x^{61} +x^{60} +x^{54} +x^{53} +x^{52} +x^{46} +x^{45} +x^{44} +x^{38} +x^{37}\\
& \quad +x^{36} +x^{14} +x^{13} +x^{12} +x^6 +x^5 +x^4 ,\\
p_{12}(x)&=x^{62} +x^{61} +x^{60} +x^{58} +x^{57} +x^{56} +x^{50} +x^{49} +x^{48} +x^{46} +x^{45}\\
& \quad +x^{44} +x^{42} +x^{41} +x^{40} +x^{34} +x^{33} +x^{32} +x^{14} +x^{13} +x^{12} +x^{10}\\
& \quad +x^9 +x^8 +x^2 +x +1 ,
\end{align*}
and the initial terms are [1, 0, 1, 1, 0, 1, 0, 0].

 For $\phi(M^o,0,1)$:
\begin{align*}
p_{0}(x)&=x^{64} +x^{61} +x^{59} +x^{55} +x^{54} +x^{52} +x^{51} +x^{50} +x^{48} +x^{47} +x^{46}\\
& \quad +x^{45} +x^{42} +x^{41} +x^{40} +x^{38} +x^{36} +x^{34} +x^{33} +x^{32} +x^{31} +x^{30}\\
& \quad +x^{29} +x^{28} +x^{26} +x^{23} +x^{20} +x^{19} +x^{17} +x^{16} +x^{15} ,\\
p_{3}(x)&=x^{57} +x^{56} +x^{55} +x^{51} +x^{50} +x^{48} +x^{47} +x^{43} +x^{42} +x^{40} +x^{39}\\
& \quad +x^{35} +x^{34} +x^{33} +x^{17} +x^{16} +x^{15} +x^{11} +x^{10} +x^9 ,\\
p_{6}(x)&=x^{58} +x^{56} +x^{54} +x^{50} +x^{48} +x^{44} +x^{42} +x^{30} +x^{28} +x^{24} +x^{20}\\
& \quad +x^{18} +x^{14} +x^{12} +x^8 +x^6 ,\\
p_{9}(x)&=x^{59} +x^{58} +x^{56} +x^{54} +x^{52} +x^{51} +x^{43} +x^{42} +x^{40} +x^{38} +x^{36}\\
& \quad +x^{35} +x^{11} +x^{10} +x^8 +x^6 +x^4 +x^3 ,\\
p_{12}(x)&=x^{60} +x^{56} +x^{48} +x^{44} +x^{40} +x^{32} +x^{12} +x^8 +1 ,
\end{align*}
and the initial terms are [0, 1, 0, 1, 0, 1, 0, 0].

 For $\phi(M^o,1,0)$:
\begin{align*}
p_{0}(x)&=x^{66} +x^{65} +x^{64} +x^{62} +x^{60} +x^{59} +x^{58} +x^{54} +x^{51} +x^{46} +x^{45}\\
& \quad +x^{43} +x^{42} +x^{41} +x^{40} +x^{35} +x^{33} +x^{31} +x^{29} +x^{28} +x^{27} +x^{23}\\
& \quad +x^{22} +x^{21} +x^{20} +x^{18} +x^{15} ,\\
p_{3}(x)&=x^{65} +x^{64} +x^{63} +x^{61} +x^{60} +x^{58} +x^{57} +x^{55} +x^{54} +x^{52} +x^{51}\\
& \quad +x^{47} +x^{46} +x^{44} +x^{43} +x^{41} +x^{40} +x^{38} +x^{37} +x^{35} +x^{34} +x^{33}\\
& \quad +x^{25} +x^{24} +x^{23} +x^{21} +x^{20} +x^{18} +x^{16} +x^{14} +x^{13} +x^{11} +x^{10}\\
& \quad +x^9 ,\\
p_{6}(x)&=x^{66} +x^{64} +x^{60} +x^{58} +x^{46} +x^{44} +x^{42} +x^{38} +x^{36} +x^{34} +x^{30}\\
& \quad +x^{28} +x^{26} +x^{10} +x^8 +x^6 ,\\
p_{9}(x)&=x^{67} +x^{66} +x^{64} +x^{63} +x^{55} +x^{54} +x^{52} +x^{50} +x^{48} +x^{47} +x^{39}\\
& \quad +x^{38} +x^{36} +x^{35} +x^{19} +x^{18} +x^{16} +x^{15} +x^7 +x^6 +x^4 +x^3 ,\\
p_{12}(x)&=x^{68} +x^{48} +x^{36} +x^{32} +x^{20} +x^4 +1 ,
\end{align*}
and the initial terms are [0, 0, 1, 1, 1, 0, 0, 1].

 For $\phi(M^o,1,1)$:
\begin{align*}
p_{0}(x)&=x^{63} +x^{61} +x^{59} +x^{58} +x^{57} +x^{56} +x^{55} +x^{53} +x^{49} +x^{48} +x^{45}\\
& \quad +x^{44} +x^{43} +x^{42} +x^{35} +x^{31} +x^{30} +x^{29} +x^{28} +x^{26} +x^{25} +x^{20}\\
& \quad +x^{18} ,\\
p_{3}(x)&=x^{62} +x^{60} +x^{56} +x^{55} +x^{53} +x^{52} +x^{51} +x^{50} +x^{49} +x^{48} +x^{46}\\
& \quad +x^{45} +x^{42} +x^{40} +x^{39} +x^{38} +x^{37} +x^{35} +x^{34} +x^{33} +x^{31} +x^{30}\\
& \quad +x^{28} +x^{27} +x^{25} +x^{23} +x^{19} +x^{17} +x^{16} +x^{13} +x^{12} +x^{10} ,\\
p_{6}(x)&=x^{62} +x^{60} +x^{57} +x^{56} +x^{55} +x^{49} +x^{46} +x^{45} +x^{44} +x^{43} +x^{41}\\
& \quad +x^{38} +x^{35} +x^{34} +x^{30} +x^{28} +x^{25} +x^{23} +x^{20} +x^{19} +x^{18} +x^{15}\\
& \quad +x^{13} +x^{12} +x^{10} +x^5 +x^4 +x^2 +x +1 ,\\
p_{9}(x)&=x^{62} +x^{61} +x^{60} +x^{54} +x^{53} +x^{52} +x^{46} +x^{45} +x^{44} +x^{38} +x^{37}\\
& \quad +x^{36} +x^{14} +x^{13} +x^{12} +x^6 +x^5 +x^4 ,\\
p_{12}(x)&=x^{62} +x^{61} +x^{60} +x^{58} +x^{57} +x^{56} +x^{50} +x^{49} +x^{48} +x^{46} +x^{45}\\
& \quad +x^{44} +x^{42} +x^{41} +x^{40} +x^{34} +x^{33} +x^{32} +x^{14} +x^{13} +x^{12} +x^{10}\\
& \quad +x^9 +x^8 +x^2 +x +1 ,
\end{align*}
and the initial terms are [0, 0, 0, 1, 1, 1, 1, 0].

 For $\phi(W^o,0,0)$:
\begin{align*}
p_{0}(x)&=x^{63} +x^{62} +x^{61} +x^{60} +x^{59} +x^{57} +x^{55} +x^{53} +x^{51} +x^{50} +x^{49}\\
& \quad +x^{47} +x^{45} +x^{44} +x^{42} +x^{40} +x^{39} +x^{37} +x^{36} +x^{31} +x^{30} +x^{28}\\
& \quad +x^{27} +x^{26} +x^{24} +x^{23} +x^{21} +x^{18} +x^{17} +x^{15} +x^{14} +x^{13} +x^{12} ,\\
p_{3}(x)&=x^{62} +x^{59} +x^{58} +x^{56} +x^{55} +x^{53} +x^{52} +x^{51} +x^{46} +x^{45} +x^{41}\\
& \quad +x^{39} +x^{38} +x^{37} +x^{36} +x^{34} +x^{32} +x^{31} +x^{30} +x^{28} +x^{27} +x^{25}\\
& \quad +x^{23} +x^{20} +x^{18} +x^{17} +x^{16} +x^{13} +x^{11} +x^{10} +x^9 +x^8 ,\\
p_{6}(x)&=x^{62} +x^{59} +x^{58} +x^{57} +x^{54} +x^{52} +x^{51} +x^{48} +x^{44} +x^{42} +x^{41}\\
& \quad +x^{37} +x^{36} +x^{35} +x^{33} +x^{31} +x^{30} +x^{29} +x^{26} +x^{24} +x^{23} +x^{18}\\
& \quad +x^{17} +x^{14} +x^{13} +x^{12} +x^{10} +x^8 +x^7 +x^2 +x +1 ,\\
p_{9}(x)&=x^{62} +x^{61} +x^{60} +x^{54} +x^{53} +x^{52} +x^{46} +x^{45} +x^{44} +x^{38} +x^{37}\\
& \quad +x^{36} +x^{14} +x^{13} +x^{12} +x^6 +x^5 +x^4 ,\\
p_{12}(x)&=x^{62} +x^{61} +x^{60} +x^{58} +x^{57} +x^{56} +x^{50} +x^{49} +x^{48} +x^{46} +x^{45}\\
& \quad +x^{44} +x^{42} +x^{41} +x^{40} +x^{34} +x^{33} +x^{32} +x^{14} +x^{13} +x^{12} +x^{10}\\
& \quad +x^9 +x^8 +x^2 +x +1 ,
\end{align*}
and the initial terms are [1, 0, 1, 1, 0, 1, 0, 0].

 For $\phi(W^o,0,1)$:
\begin{align*}
p_{0}(x)&=x^{66} +x^{65} +x^{64} +x^{62} +x^{60} +x^{59} +x^{58} +x^{54} +x^{51} +x^{46} +x^{45}\\
& \quad +x^{43} +x^{42} +x^{41} +x^{40} +x^{35} +x^{33} +x^{31} +x^{29} +x^{28} +x^{27} +x^{23}\\
& \quad +x^{22} +x^{21} +x^{20} +x^{18} +x^{15} ,\\
p_{3}(x)&=x^{65} +x^{64} +x^{63} +x^{61} +x^{60} +x^{58} +x^{57} +x^{55} +x^{54} +x^{52} +x^{51}\\
& \quad +x^{47} +x^{46} +x^{44} +x^{43} +x^{41} +x^{40} +x^{38} +x^{37} +x^{35} +x^{34} +x^{33}\\
& \quad +x^{25} +x^{24} +x^{23} +x^{21} +x^{20} +x^{18} +x^{16} +x^{14} +x^{13} +x^{11} +x^{10}\\
& \quad +x^9 ,\\
p_{6}(x)&=x^{66} +x^{64} +x^{60} +x^{58} +x^{46} +x^{44} +x^{42} +x^{38} +x^{36} +x^{34} +x^{30}\\
& \quad +x^{28} +x^{26} +x^{10} +x^8 +x^6 ,\\
p_{9}(x)&=x^{67} +x^{66} +x^{64} +x^{63} +x^{55} +x^{54} +x^{52} +x^{50} +x^{48} +x^{47} +x^{39}\\
& \quad +x^{38} +x^{36} +x^{35} +x^{19} +x^{18} +x^{16} +x^{15} +x^7 +x^6 +x^4 +x^3 ,\\
p_{12}(x)&=x^{68} +x^{48} +x^{36} +x^{32} +x^{20} +x^4 +1 ,
\end{align*}
and the initial terms are [0, 0, 1, 1, 1, 0, 0, 1].

 For $\phi(W^o,1,0)$:
\begin{align*}
p_{0}(x)&=x^{64} +x^{61} +x^{59} +x^{55} +x^{54} +x^{52} +x^{51} +x^{50} +x^{48} +x^{47} +x^{46}\\
& \quad +x^{45} +x^{42} +x^{41} +x^{40} +x^{38} +x^{36} +x^{34} +x^{33} +x^{32} +x^{31} +x^{30}\\
& \quad +x^{29} +x^{28} +x^{26} +x^{23} +x^{20} +x^{19} +x^{17} +x^{16} +x^{15} ,\\
p_{3}(x)&=x^{57} +x^{56} +x^{55} +x^{51} +x^{50} +x^{48} +x^{47} +x^{43} +x^{42} +x^{40} +x^{39}\\
& \quad +x^{35} +x^{34} +x^{33} +x^{17} +x^{16} +x^{15} +x^{11} +x^{10} +x^9 ,\\
p_{6}(x)&=x^{58} +x^{56} +x^{54} +x^{50} +x^{48} +x^{44} +x^{42} +x^{30} +x^{28} +x^{24} +x^{20}\\
& \quad +x^{18} +x^{14} +x^{12} +x^8 +x^6 ,\\
p_{9}(x)&=x^{59} +x^{58} +x^{56} +x^{54} +x^{52} +x^{51} +x^{43} +x^{42} +x^{40} +x^{38} +x^{36}\\
& \quad +x^{35} +x^{11} +x^{10} +x^8 +x^6 +x^4 +x^3 ,\\
p_{12}(x)&=x^{60} +x^{56} +x^{48} +x^{44} +x^{40} +x^{32} +x^{12} +x^8 +1 ,
\end{align*}
and the initial terms are [0, 1, 0, 1, 0, 1, 0, 0].

 For $\phi(W^o,1,1)$:
\begin{align*}
p_{0}(x)&=x^{63} +x^{61} +x^{59} +x^{58} +x^{57} +x^{56} +x^{55} +x^{53} +x^{49} +x^{48} +x^{45}\\
& \quad +x^{44} +x^{43} +x^{42} +x^{35} +x^{31} +x^{30} +x^{29} +x^{28} +x^{26} +x^{25} +x^{20}\\
& \quad +x^{18} ,\\
p_{3}(x)&=x^{62} +x^{60} +x^{56} +x^{55} +x^{53} +x^{52} +x^{51} +x^{50} +x^{49} +x^{48} +x^{46}\\
& \quad +x^{45} +x^{42} +x^{40} +x^{39} +x^{38} +x^{37} +x^{35} +x^{34} +x^{33} +x^{31} +x^{30}\\
& \quad +x^{28} +x^{27} +x^{25} +x^{23} +x^{19} +x^{17} +x^{16} +x^{13} +x^{12} +x^{10} ,\\
p_{6}(x)&=x^{62} +x^{60} +x^{57} +x^{56} +x^{55} +x^{49} +x^{46} +x^{45} +x^{44} +x^{43} +x^{41}\\
& \quad +x^{38} +x^{35} +x^{34} +x^{30} +x^{28} +x^{25} +x^{23} +x^{20} +x^{19} +x^{18} +x^{15}\\
& \quad +x^{13} +x^{12} +x^{10} +x^5 +x^4 +x^2 +x +1 ,\\
p_{9}(x)&=x^{62} +x^{61} +x^{60} +x^{54} +x^{53} +x^{52} +x^{46} +x^{45} +x^{44} +x^{38} +x^{37}\\
& \quad +x^{36} +x^{14} +x^{13} +x^{12} +x^6 +x^5 +x^4 ,\\
p_{12}(x)&=x^{62} +x^{61} +x^{60} +x^{58} +x^{57} +x^{56} +x^{50} +x^{49} +x^{48} +x^{46} +x^{45}\\
& \quad +x^{44} +x^{42} +x^{41} +x^{40} +x^{34} +x^{33} +x^{32} +x^{14} +x^{13} +x^{12} +x^{10}\\
& \quad +x^9 +x^8 +x^2 +x +1 ,
\end{align*}
and the initial terms are [0, 0, 0, 1, 1, 1, 1, 0].

Below is the transition function and output function of an $2$-automaton that 
generates $T=M^o_{0,0}$:

Transition function $(n,j)\mapsto \delta(n,j)$ ($\Lambda(n):=[\delta(n,0),\delta(n,1)]$):
{\small

{
\renewcommand{\arraystretch}{1.2}

\begin{longtable}{| c c  |  c c  |  c c  |  c c  |  c c  |}
\hline
$n$ & $\Lambda(n)$ & $n$ & $\Lambda(n)$ & $n$ & $\Lambda(n)$ & $n$ & $\Lambda(n)$ & $n$ & $\Lambda(n)$ \\
\hline
0 & [1, 2]& 25 & [14, 42]& 50 & [71, 44]& 75 & [99, 51]& 100 & [10, 39]\\
1 & [3, 4]& 26 & [26, 26]& 51 & [78, 29]& 76 & [100, 97]& 101 & [28, 9]\\
2 & [5, 6]& 27 & [43, 44]& 52 & [32, 14]& 77 & [16, 101]& 102 & [84, 36]\\
3 & [7, 8]& 28 & [45, 46]& 53 & [44, 79]& 78 & [102, 101]& 103 & [79, 41]\\
4 & [9, 10]& 29 & [47, 48]& 54 & [80, 81]& 79 & [103, 104]& 104 & [115, 40]\\
5 & [11, 12]& 30 & [49, 50]& 55 & [82, 36]& 80 & [54, 95]& 105 & [81, 93]\\
6 & [13, 14]& 31 & [51, 52]& 56 & [83, 84]& 81 & [64, 105]& 106 & [87, 81]\\
7 & [15, 16]& 32 & [53, 54]& 57 & [30, 71]& 82 & [48, 60]& 107 & [116, 42]\\
8 & [17, 18]& 33 & [55, 20]& 58 & [85, 79]& 83 & [61, 49]& 108 & [117, 35]\\
9 & [19, 20]& 34 & [56, 51]& 59 & [86, 87]& 84 & [97, 23]& 109 & [118, 41]\\
10 & [20, 21]& 35 & [57, 49]& 60 & [88, 37]& 85 & [23, 19]& 110 & [52, 56]\\
11 & [22, 23]& 36 & [58, 59]& 61 & [73, 31]& 86 & [106, 64]& 111 & [95, 59]\\
12 & [24, 19]& 37 & [60, 61]& 62 & [89, 75]& 87 & [59, 68]& 112 & [101, 104]\\
13 & [25, 26]& 38 & [62, 56]& 63 & [40, 2]& 88 & [107, 105]& 113 & [69, 18]\\
14 & [27, 4]& 39 & [63, 64]& 64 & [90, 26]& 89 & [108, 98]& 114 & [18, 27]\\
15 & [12, 28]& 40 & [21, 16]& 65 & [1, 75]& 90 & [105, 54]& 115 & [119, 106]\\
16 & [29, 30]& 41 & [65, 47]& 66 & [42, 12]& 91 & [6, 47]& 116 & [104, 98]\\
17 & [31, 32]& 42 & [66, 60]& 67 & [91, 92]& 92 & [109, 27]& 117 & [120, 61]\\
18 & [33, 9]& 43 & [67, 68]& 68 & [93, 87]& 93 & [68, 106]& 118 & [121, 21]\\
19 & [34, 30]& 44 & [50, 69]& 69 & [77, 84]& 94 & [110, 2]& 119 & [37, 92]\\
20 & [35, 29]& 45 & [70, 71]& 70 & [46, 6]& 95 & [111, 93]& 120 & [122, 28]\\
21 & [36, 37]& 46 & [72, 73]& 71 & [94, 95]& 96 & [112, 39]& 121 & [92, 89]\\
22 & [2, 38]& 47 & [74, 10]& 72 & [96, 48]& 97 & [8, 77]& 122 & [123, 69]\\
23 & [39, 40]& 48 & [75, 73]& 73 & [4, 97]& 98 & [113, 31]& 123 & [38, 8]\\
24 & [41, 35]& 49 & [76, 77]& 74 & [98, 46]& 99 & [114, 32]&  & \\
\hline
\end{longtable}

}

}

 Output function $n\mapsto \tau(n)$:
{\small

{
\renewcommand{\arraystretch}{1.2}

\begin{longtable}{| c c  |  c c  |  c c  |  c c  |  c c  |  c c  |  c c  |}
\hline
$n$ & $\tau(n)$ & $n$ & $\tau(n)$ & $n$ & $\tau(n)$ & $n$ & $\tau(n)$ & $n$ & $\tau(n)$ & $n$ & $\tau(n)$ & $n$ & $\tau(n)$ \\
\hline
0 & 0& 18 & 0& 36 & 1& 54 & 1& 72 & 1& 90 & 1& 108 & 1\\
1 & 0& 19 & 1& 37 & 0& 55 & 0& 73 & 1& 91 & 1& 109 & 1\\
2 & 0& 20 & 0& 38 & 1& 56 & 1& 74 & 0& 92 & 1& 110 & 0\\
3 & 0& 21 & 1& 39 & 1& 57 & 0& 75 & 0& 93 & 1& 111 & 0\\
4 & 1& 22 & 0& 40 & 1& 58 & 1& 76 & 0& 94 & 0& 112 & 1\\
5 & 0& 23 & 1& 41 & 0& 59 & 0& 77 & 0& 95 & 0& 113 & 0\\
6 & 1& 24 & 0& 42 & 1& 60 & 0& 78 & 1& 96 & 1& 114 & 0\\
7 & 0& 25 & 1& 43 & 1& 61 & 1& 79 & 0& 97 & 1& 115 & 0\\
8 & 1& 26 & 0& 44 & 0& 62 & 1& 80 & 1& 98 & 0& 116 & 0\\
9 & 1& 27 & 1& 45 & 1& 63 & 1& 81 & 1& 99 & 0& 117 & 1\\
10 & 0& 28 & 1& 46 & 1& 64 & 1& 82 & 0& 100 & 0& 118 & 1\\
11 & 0& 29 & 0& 47 & 0& 65 & 0& 83 & 1& 101 & 1& 119 & 0\\
12 & 0& 30 & 0& 48 & 0& 66 & 1& 84 & 1& 102 & 1& 120 & 1\\
13 & 1& 31 & 1& 49 & 0& 67 & 1& 85 & 1& 103 & 0& 121 & 1\\
14 & 1& 32 & 0& 50 & 0& 68 & 1& 86 & 0& 104 & 0& 122 & 1\\
15 & 0& 33 & 0& 51 & 1& 69 & 0& 87 & 0& 105 & 1& 123 & 1\\
16 & 0& 34 & 1& 52 & 0& 70 & 1& 88 & 0& 106 & 0&  & \\
17 & 1& 35 & 0& 53 & 0& 71 & 0& 89 & 1& 107 & 0&  & \\
\hline
\end{longtable}

}

}

	\subsection{Data for Section \ref{sec:s}}
All $16$ polynomials are of the form  
$$p_0(x)+p_3(x)y^3+p_6(x)y^6+p_9(x)y^9+p_{12}(x)y^{12}.$$
For $(i,j)=(1,0)$ and all $T\in\{M^e, M^o, W^e, W^o\}$, 
$$p_6(x)=p_9(x)=p_{12}(x)=0.$$
The coefficients $p_j(x)$ and the $2$ initial terms to determine the solutions uniquely
are given below.

 For $\phi(M^e,0,0)$:
 \begin{align*}
	 p_0(x)&=\left(a^{12} + a^{11} + a^{4} + a^{3}\right) x^{12} + \left(a^{10} + a^{9} + a^{7} + a^{5} + a^{3} + a^{2}\right) x^{10}  \\ &\quad +\left(a^{8} + a^{7} + a^{2} + a\right) x^{8} + \left(a^{6} + a^{5} + a^{3} + a + 1\right) x^{6},\\
	 p_3(x)&=\left(a^{9} + a\right) x^{9} + \left(a^{8} + 1\right) x^{8} + \left(a^{6} + a^{5} + a^{2} + a\right) x^{7} + \left(a^{5} + a\right) x^{5}, \\ 
	 &\quad + \left(a^{4} + 1\right) x^{4} + \left(a^{2} + a\right) x^{3},\\
	 p_6(x)&=\left(a^{9} + a\right) x^{9} + \left(a^{7} + a^{6} + a^{5} + a^{4} + a^{3} + a^{2} + a + 1\right) x^{8} \\&\quad + \left(a^{6} + a^{5} + a^{2} + a\right) x^{7} + \left(a^{6} + a^{5} + a^{4} + a^{2} + a + 1\right) x^{6} + \left(a^{4} + 1\right) x^{5} \\&\quad + \left(a^{4} + a^{3} + a^{2} + a\right) x^{4}+ \left(a^{2} + a\right) x^{3} + \left(a^{2} + a + 1\right) x^{2} + \left(a + 1\right) x + 1,\\
	 p_9(x)&=\left(a^{8} + 1\right) x^{8} + \left(a^{7} + a^{6} + a^{5} + a^{4} + a^{3} + a^{2} + a + 1\right) x^{7}\\&\quad + \left(a^{6} + a^{4} + a^{2} + 1\right) x^{6} + \left(a^{5} + a^{4} + a + 1\right) x^{5} + \left(a^{4} + 1\right) x^{4}\\&\quad + \left(a^{3} + a^{2} + a + 1\right) x^{3} + \left(a^{2} + 1\right) x^{2}+ \left(a + 1\right) x,\\
	 p_{12}(x)&=\left(a^{8} + 1\right) x^{8} + 1,
 \end{align*}
 and the initial terms are $[1,a]$.

 For $\phi(M^e,0,1)$:
 \begin{align*}
	 p_0(x)&=(a^{24} + a^{22} + a^{8} + a^{6}) x^{24} + (a^{22} + a^{21} + a^{6} + a^{5}) x^{22} + (a^{20} + a^{19} + a^{18} +\\ &\quad
	 a^{17} + a^{15} + a^{14} + a^{13} + a^{11} + a^{10} + a^{9} + a^{7} + a^{6} + a^{5} + a^{4}) x^{20} + (a^{18}
	\\&\quad + a^{17} + a^{16} + a^{15} + a^{14} + a^{13} + a^{12} + a^{11} + a^{10} + a^{9} + a^{8} + a^{7} + a^{6} + a^{5}\\
	 &\quad + a^{4} + a^{3}) x^{18} + (a^{15} + a^{12} + a^{11} + a^{10} + a^{8} + a^{7} + a^{6} + a^{3}) x^{16}
	 + (a^{12} \\&\quad + a^{11} + a^{10} + a^{9} + a^{6} + a^{5} + a^{4} + a^{3}) x^{14} + (a^{9} + a^{8} + a^{6} + a^{4} + a^{3}) x^{12}, \\
	 p_3(x)&=(a^{14} + a^{12} + a^{10} + a^{8} + a^{6} + a^{4} + a^{2} + 1) x^{14} + (a^{13} + a^{12} + a^{9} + a^{8}\\&\quad + a^{5} + a^{4} + a + 1) x^{13} + (a^{10} + a^{8} + a^{2} + 1) x^{10} + (a^{9} + a^{8} + a + 1) x^{9} ,\\
	 p_6(x)&=(a^{12} + a^{8} + a^{4} + 1) x^{12} + (a^{10} + a^{8} + a^{2} + 1) x^{10} + (a^{8} + 1) x^{8}\\&\quad + (a^{6} + a^{4} + a^{2} + 1) x^{6} ,\\
	 p_9(x)&=(a^{10} + a^{8} + a^{2} + 1) x^{10} + (a^{9} + a^{8} + a + 1) x^{9} + (a^{8} + 1) x^{8} + (a^{7} + a^{6}\\&\quad + a^{5} + a^{4} + a^{3} + a^{2} + a + 1) x^{7} + (a^{6} + a^{4} + a^{2} + 1) x^{6} + (a^{5} + a^{4} + a\\&\quad + 1) x^{5} + (a^{4} + 1) x^{4} + (a^{3} + a^{2} + a + 1) x^{3} ,\\
	 p_{12}(x)&=(a^{8} + 1) x^{8} + 1,
 \end{align*}
 and the initial terms are $[0,a]$.

 For $\phi(M^e,1,0)$:
 \begin{align*}
	 p_0(x)&=1,\\
	 p_3(x)&=(a^{2} + 1) x^{2} + 1 ,
 \end{align*}
 and the initial terms are $[1,0]$.

 For $\phi(M^e,1,1)$:
 \begin{align*}
	 p_0(x)&=(a^{12} + a^{11} + a^{4} + a^{3}) x^{12} + (a^{9} + a^{7} + a^{5} + a^{3}) x^{10} + (a^{6} + a^{5} + a^{4} + a^{3})\\&\quad x^{8} + a^{3} x^{6} ,\\
	 p_3(x)&=(a^{9} + a) x^{9} + (a^{8} + 1) x^{8} + (a^{7} + a^{4} + a^{3} + 1) x^{7} + (a^{5} + a) x^{5} + (a^{4} + 1) x^{4} \\&\quad+ (a^{3} + 1) x^{3} ,\\
	 p_6(x)&=(a^{9} + a) x^{9} + (a^{7} + a^{6} + a^{5} + a^{4} + a^{3} + a^{2} + a + 1) x^{8} + (a^{7} + a^{4} + a^{3}\\&\quad + 1) x^{7} + (a^{5} + a) x^{6} + (a^{4} + 1) x^{5} + (a^{4} + a^{3} + a^{2} + a) x^{4} + (a^{3} + 1) x^{3}\\&\quad + a x^{2} + (a + 1) x + 1 ,\\
	 p_9(x)&=(a^{8} + 1) x^{8} + (a^{7} + a^{6} + a^{5} + a^{4} + a^{3} + a^{2} + a + 1) x^{7} + (a^{6} + a^{4} + a^{2}\\&\quad + 1) x^{6} + (a^{5} + a^{4} + a + 1) x^{5} + (a^{4} + 1) x^{4} + (a^{3} + a^{2} + a + 1) x^{3}\\&\quad + (a^{2} + 1) x^{2} + (a + 1) x ,\\
	 p_{12}(x)&=(a^{8} + 1) x^{8} + 1,
 \end{align*}
 and the initial terms are $[0,a]$.

 For $\phi(M^o,0,0)$:
 \begin{align*}
	 p_0(x)&=(a^{9} + a^{8} + a + 1) x^{12} + (a^{8} + a^{7} + a^{5} + a^{3} + a + 1) x^{10} + (a^{7} + a^{6} + a + 1)\\&\quad x^{8} + (a^{6} + a^{5} + a^{3} + a + 1) x^{6} ,\\
	 p_3(x)&=(a^{7} + a^{6} + a^{5} + a^{4} + a^{3} + a^{2} + a + 1) x^{8} + (a^{7} + a^{5} + a^{3} + a) x^{7} + (a^{6}\\&\quad + a^{2}) x^{6} + (a^{5} + a^{4} + a^{3} + a^{2}) x^{5} + (a^{4} + a^{3} + a + 1) x^{4}+ (a^{2} + a) x^{3} ,\\
	 p_6(x)&=(a^{6} + a^{4} + a^{2} + 1) x^{7} + (a^{6} + a^{5} + a^{2} + a) x^{6} + (a^{3} + a^{2} + a + 1) x^{5}\\&\quad + (a^{2} + 1) x^{4} + (a^{3} + 1) x^{3} + a x^{2} + (a + 1) x + 1 ,\\
	 p_9(x)&=(a^{5} + a^{4} + a + 1) x^{5} + (a + 1) x ,\\
	 p_{12}(x)&=(a^{4} + 1) x^{4} + 1 ,
 \end{align*}
 and the initial terms are $[1,a]$.

 For $\phi(M^o,0,1)$:
 \begin{align*}
	 p_0(x)&=(a^{21} + a^{19} + a^{5} + a^{3}) x^{24} + (a^{20} + a^{4}) x^{22} + (a^{19} + a^{16} + a^{12} + a^{8} + a^{4}\\&\quad
	 + a^{3}) x^{20} + (a^{18} + a^{14} + a^{10} + a^{6}) x^{18} + (a^{15} + a^{14} + a^{9} + a^{6} + a^{5} + a^{3})\\&\quad x^{16} + (a^{12} + a^{10} + a^{6} + a^{4}) x^{14} + (a^{9} + a^{8} + a^{6} + a^{4} + a^{3}) x^{12} ,\\
	 p_3(x)&=(a^{16} + 1) x^{16} + (a^{15} + a^{14} + a^{13} + a^{12} + a^{11} + a^{10} + a^{9} + a^{8} + a^{7} + a^{6} \\&\quad+ a^{5} + a^{4} + a^{3} + a^{2} + a + 1) x^{15} + (a^{14} + a^{12} + a^{10} + a^{8} + a^{6} + a^{4}\\&\quad + a^{2} + 1) x^{14} + (a^{13} + a^{12} + a^{9} + a^{8} + a^{5} + a^{4} + a + 1) x^{13} + (a^{12} + a^{8}\\&\quad + a^{4} + 1) x^{12} + (a^{11} + a^{10} + a^{9} + a^{8} + a^{3} + a^{2} + a + 1) x^{11} + (a^{10} + a^{8}\\&\quad + a^{2} + 1) x^{10} + (a^{9} + a^{8} + a + 1) x^{9} ,\\
	 p_6(x)&=(a^{12} + a^{8} + a^{4} + 1) x^{12} + (a^{10} + a^{8} + a^{2} + 1) x^{10} + (a^{8} + 1) x^{8} + (a^{6} + a^{4} +\\&\quad a^{2} + 1) x^{6} ,\\
	 p_9(x)&=(a^{8} + 1) x^{8} + (a^{7} + a^{6} + a^{5} + a^{4} + a^{3} + a^{2} + a + 1) x^{7} + (a^{4} + 1) x^{4} + (a^{3}\\&\quad + a^{2} + a + 1) x^{3} ,\\
	 p_{12}(x)&=(a^{4} + 1) x^{4} + 1 ,
 \end{align*}
 and the initial terms are $[0,a]$.

 For $\phi(M^o,1,0)$:
 \begin{align*}
	 p_0(x)&=1\\
	 p_3(x)&=(a + 1) x + 1 ,
 \end{align*}
 and the initial terms are $[1,a+1]$.

 For $\phi(M^o,1,1)$:
 \begin{align*}
	 p_0(x)&=(a^{12} + a^{11} + a^{4} + a^{3}) x^{12} + (a^{9} + a^{7} + a^{5} + a^{3}) x^{10} + (a^{6} + a^{5} + a^{4} + a^{3})\\&\quad x^{8} + a^{3} x^{6} ,\\
	 p_3(x)&=(a^{8} + a^{7} + a^{6} + a^{5} + a^{4} + a^{3} + a^{2} + a) x^{8} + (a^{6} + a^{4} + a^{2} + 1) x^{7} + (a^{6} +\\&\quad a^{2}) x^{6} + (a^{5} + a^{4} + a^{3} + a^{2}) x^{5} + (a^{3} + a) x^{4} + (a^{3} + 1) x^{3} ,\\
	 p_6(x)&=(a^{7} + a^{5} + a^{3} + a) x^{7} + (a^{5} + a^{4} + a + 1) x^{6} + (a^{3} + a^{2} + a + 1) x^{5}\\&\quad + (a^{2} + 1) x^{4} + (a^{2} + a) x^{3} + (a^{2} + a + 1) x^{2} + (a + 1) x + 1 ,\\
	 p_9(x)&=(a^{5} + a^{4} + a + 1) x^{5} + (a + 1) x ,\\
	 p_{12}(x)&=(a^{4} + 1) x^{4} + 1 ,
 \end{align*}
 and the initial terms are $[0,a]$.

 For $\phi(W^e,0,0)$:
 \begin{align*}
	 p_0(x)&=(a^{9} + a^{8} + a + 1) x^{12} + (a^{8} + a^{7} + a^{5} + a^{3} + a + 1) x^{10} + (a^{7} + a^{6} + a + 1)\\&\quad x^{8}+ (a^{6} + a^{5} + a^{3} + a + 1) x^{6} ,\\
	 p_3(x)&=(a^{8} + 1) x^{9} + (a^{8} + 1) x^{8} + (a^{6} + a^{5} + a^{2} + a) x^{7} + (a^{4} + 1) x^{5} + (a^{4} + 1) x^{4}\\&\quad + (a^{2} + a) x^{3} ,\\
	 p_6(x)&=(a^{8} + 1) x^{9} + (a^{8} + a^{7} + a^{6} + a^{5} + a^{4} + a^{3} + a^{2} + a) x^{8} + (a^{6} + a^{5} + a^{2}\\&\quad + a) x^{7} + (a^{6} + a^{5} + a^{4} + a^{2} + a + 1) x^{6} + (a^{5} + a) x^{5} + (a^{3} + a^{2} + a + 1)\\&\quad x^{4} + (a^{2} + a) x^{3} + (a^{2} + a + 1) x^{2} + (a + 1) x + 1 ,\\
	 p_9(x)&=(a^{8} + 1) x^{8} + (a^{7} + a^{6} + a^{5} + a^{4} + a^{3} + a^{2} + a + 1) x^{7} + (a^{6} + a^{4} + a^{2}\\&\quad + 1) x^{6} + (a^{5} + a^{4} + a + 1) x^{5} + (a^{4} + 1) x^{4} + (a^{3} + a^{2} + a + 1) x^{3}\\&\quad + (a^{2} + 1) x^{2} + (a + 1) x ,\\
	 p_{12}(x)&=(a^{8} + 1) x^{8} + 1 ,
 \end{align*}
 and the initial terms are $[1,1]$.

 For $\phi(W^e,0,1)$:
 \begin{align*}
	 p_0(x)&=(a^{18} + a^{16} + a^{2} + 1) x^{24} + (a^{17} + a^{16} + a + 1) x^{22} + (a^{16} + a^{15} + a^{14} + a^{13}\\&\quad + a^{11} + a^{10} + a^{9} + a^{7} + a^{6} + a^{5} + a^{3} + a^{2} + a + 1) x^{20} + (a^{15} + a^{14} + a^{13}\\&\quad + a^{12} + a^{11} + a^{10} + a^{9} + a^{8} + a^{7} + a^{6} + a^{5} + a^{4} + a^{3} + a^{2} + a + 1) x^{18}\\&\quad + (a^{13} + a^{10} + a^{9} + a^{8} + a^{6} + a^{5} + a^{4} + a) x^{16} + (a^{11} + a^{10} + a^{9} + a^{8}\\&\quad + a^{5} + a^{4} + a^{3} + a^{2}) x^{14} + (a^{9} + a^{8} + a^{6} + a^{4} + a^{3}) x^{12} ,\\
	 p_3(x)&=(a^{14} + a^{12} + a^{10} + a^{8} + a^{6} + a^{4} + a^{2} + 1) x^{14} + (a^{13} + a^{12} + a^{9} + a^{8} + a^{5}\\&\quad + a^{4} + a + 1) x^{13} + (a^{10} + a^{8} + a^{2} + 1) x^{10} + (a^{9} + a^{8} + a + 1) x^{9} ,\\
	 p_6(x)&=(a^{12} + a^{8} + a^{4} + 1) x^{12} + (a^{10} + a^{8} + a^{2} + 1) x^{10} + (a^{8} + 1) x^{8} + (a^{6} + a^{4}\\&\quad + a^{2} + 1) x^{6} ,\\
	 p_9(x)&=(a^{10} + a^{8} + a^{2} + 1) x^{10} + (a^{9} + a^{8} + a + 1) x^{9} + (a^{8} + 1) x^{8} + (a^{7} + a^{6}\\&\quad + a^{5} + a^{4} + a^{3} + a^{2} + a + 1) x^{7} + (a^{6} + a^{4} + a^{2} + 1) x^{6} + (a^{5} + a^{4} + a\\&\quad + 1) x^{5} + (a^{4} + 1) x^{4} + (a^{3} + a^{2} + a + 1) x^{3} ,\\
	 p_{12}(x)&=(a^{8} + 1) x^{8} + 1 ,
 \end{align*}
 and the initial terms are $[0,1]$.

 For $\phi(W^e,1,0)$:
 \begin{align*}
	 p_0(x)&=1\\
	 p_3(x)&=(a^{2} + 1) x^{2} + 1 ,\\
 \end{align*}
 and the initial terms are $[1,0]$.

 For $\phi(W^e,1,1)$:
 \begin{align*}
	 p_0(x)&=(a^{9} + a^{8} + a + 1) x^{12} + (a^{7} + a^{5} + a^{3} + a) x^{10} + (a^{5} + a^{4} + a^{3} + a^{2}) x^{8}\\&\quad + a^{3} x^{6} ,\\
	 p_3(x)&=(a^{8} + 1) x^{9} + (a^{8} + 1) x^{8} + (a^{7} + a^{4} + a^{3} + 1) x^{7} + (a^{4} + 1) x^{5} + (a^{4} + 1) x^{4}\\&\quad + (a^{3} + 1) x^{3} ,\\
	 p_6(x)&=(a^{8} + 1) x^{9} + (a^{8} + a^{7} + a^{6} + a^{5} + a^{4} + a^{3} + a^{2} + a) x^{8} + (a^{7} + a^{4} + a^{3}\\&\quad + 1) x^{7} + (a^{5} + a) x^{6} + (a^{5} + a) x^{5} + (a^{3} + a^{2} + a + 1) x^{4} + (a^{3} + 1) x^{3}\\&\quad + a x^{2} + (a + 1) x + 1 ,\\
	 p_9(x)&=(a^{8} + 1) x^{8} + (a^{7} + a^{6} + a^{5} + a^{4} + a^{3} + a^{2} + a + 1) x^{7} + (a^{6} + a^{4} + a^{2}\\&\quad + 1) x^{6} + (a^{5} + a^{4} + a + 1) x^{5} + (a^{4} + 1) x^{4} + (a^{3} + a^{2} + a + 1) x^{3}\\&\quad + (a^{2} + 1) x^{2} + (a + 1) x ,\\
	 p_{12}(x)&=(a^{8} + 1) x^{8} + 1 ,
 \end{align*}
 and the initial terms are $[0,1]$.

 For $\phi(W^o,0,0)$:
 \begin{align*}
	 p_0(x)&=(a^{12} + a^{11} + a^{4} + a^{3}) x^{12} + (a^{10} + a^{9} + a^{7} + a^{5} + a^{3} + a^{2}) x^{10} + (a^{8}\\&\quad + a^{7} + a^{2} + a) x^{8} + (a^{6} + a^{5} + a^{3} + a + 1) x^{6} ,\\
	 p_3(x)&=(a^{8} + a^{7} + a^{6} + a^{5} + a^{4} + a^{3} + a^{2} + a) x^{8} + (a^{6} + a^{4} + a^{2} + 1) x^{7}\\&\quad + (a^{4} + 1) x^{6} + (a^{3} + a^{2} + a + 1) x^{5} + (a^{4} + a^{3} + a + 1) x^{4} + (a^{2} + a) x^{3} ,\\
	 p_6(x)&=(a^{7} + a^{5} + a^{3} + a) x^{7} + (a^{5} + a^{4} + a + 1) x^{6} + (a^{5} + a^{4} + a^{3} + a^{2}) x^{5}\\&\quad + (a^{4} + a^{2}) x^{4} + (a^{3} + 1) x^{3} + a x^{2} + (a + 1) x + 1 ,\\
	 p_9(x)&=(a^{5} + a^{4} + a + 1) x^{5} + (a + 1) x ,\\
	 p_{12}(x)&=(a^{4} + 1) x^{4} + 1 ,
 \end{align*}
 and the initial terms are $[1,1]$.

 For $\phi(W^o,0,1)$:
 \begin{align*}
	 p_0(x)&=(a^{21} + a^{19} + a^{5} + a^{3}) x^{24} + (a^{18} + a^{2}) x^{22} + (a^{17} + a^{16} + a^{12} + a^{8} + a^{4}\\&\quad + a) x^{20} + (a^{12} + a^{8} + a^{4} + 1) x^{18} + (a^{13} + a^{11} + a^{10} + a^{7} + a^{2} + a) x^{16}\\&\quad + (a^{10} + a^{8} + a^{4} + a^{2}) x^{14} + (a^{9} + a^{8} + a^{6} + a^{4} + a^{3}) x^{12} ,\\
	 p_3(x)&=(a^{16} + 1) x^{16} + (a^{15} + a^{14} + a^{13} + a^{12} + a^{11} + a^{10} + a^{9} + a^{8} + a^{7} + a^{6}\\&\quad + a^{5} + a^{4} + a^{3} + a^{2} + a + 1) x^{15} + (a^{14} + a^{12} + a^{10} + a^{8} + a^{6} + a^{4} + a^{2}\\&\quad + 1) x^{14} + (a^{13} + a^{12} + a^{9} + a^{8} + a^{5} + a^{4} + a + 1) x^{13} + (a^{12} + a^{8} + a^{4} \\&\quad+ 1) x^{12} + (a^{11} + a^{10} + a^{9} + a^{8} + a^{3} + a^{2} + a + 1) x^{11} + (a^{10} + a^{8} + a^{2}\\&\quad + 1) x^{10} + (a^{9} + a^{8} + a + 1) x^{9} ,\\
	 p_6(x)&=(a^{12} + a^{8} + a^{4} + 1) x^{12} + (a^{10} + a^{8} + a^{2} + 1) x^{10} + (a^{8} + 1) x^{8} + (a^{6} + a^{4}\\&\quad + a^{2} + 1) x^{6} ,\\
	 p_9(x)&=(a^{8} + 1) x^{8} + (a^{7} + a^{6} + a^{5} + a^{4} + a^{3} + a^{2} + a + 1) x^{7} + (a^{4} + 1) x^{4} + (a^{3}\\&\quad + a^{2} + a + 1) x^{3} ,\\
	 p_{12}(x)&=(a^{4} + 1) x^{4} + 1 ,
 \end{align*}
 and the initial terms are $[0,1]$.

 For $\phi(W^o,1,0)$:
 \begin{align*}
	 p_0(x)&=1,\\
	 p_3(x)&=(a + 1) x + 1 ,
 \end{align*}
 and the initial terms are $[1,a+1]$.

 For $\phi(W^o,1,1)$:
 \begin{align*}
	 p_0(x)&=(a^{9} + a^{8} + a + 1) x^{12} + (a^{7} + a^{5} + a^{3} + a) x^{10} + (a^{5} + a^{4} + a^{3} + a^{2}) x^{8} \\&\quad+ a^{3} x^{6} ,\\
	 p_3(x)&=(a^{7} + a^{6} + a^{5} + a^{4} + a^{3} + a^{2} + a + 1) x^{8} + (a^{7} + a^{5} + a^{3} + a) x^{7} + (a^{4}\\&\quad + 1) x^{6} + (a^{3} + a^{2} + a + 1) x^{5} + (a^{3} + a) x^{4} + (a^{3} + 1) x^{3} ,\\
	 p_6(x)&=(a^{6} + a^{4} + a^{2} + 1) x^{7} + (a^{6} + a^{5} + a^{2} + a) x^{6} + (a^{5} + a^{4} + a^{3} + a^{2}) x^{5}\\&\quad + (a^{4} + a^{2}) x^{4} + (a^{2} + a) x^{3} + (a^{2} + a + 1) x^{2} + (a + 1) x + 1 ,\\
	 p_9(x)&=(a^{5} + a^{4} + a + 1) x^{5} + (a + 1) x ,\\
	 p_{12}(x)&=(a^{4} + 1) x^{4} + 1 ,
 \end{align*}
 and the initial terms are $[0,1]$.

 \subsection{Data for subsection \ref{subsection:pd1}}
All $16$ polynomials are of the form  
$$p_0(x)+p_1(x)y+p_2(x)y^2+p_{4}(x)y^{4}.$$
The coefficients $p_j(x)$ and the two initial terms to determine 
the solutions uniquely are given below.

For $\phi(A^e,0,0)$:
\begin{align*}
p_{0}(x)&=x^8 + x^6 + x^5 + x^2 + 1\\
p_{1}(x)&=x^2 + x,\\
p_{2}(x)&=x,\\
p_{4}(x)&=1.
\end{align*}
and the initial terms are [1, 0].

For $\phi(A^e,0,1)$:
\begin{align*}
	p_{0}(x)&=x^{13} + x^9 + x^4 + x^3 + x^2,\\
p_{1}(x)&=x^3 + x^2 + x + 1,\\
p_{2}(x)&=0,\\
p_{4}(x)&=x^5.
\end{align*}
and the initial terms are [0, 0].

For $\phi(A^e,1,0)$:
\begin{align*}
p_{0}(x)&=x^2,\\
p_{1}(x)&=1.
\end{align*}
and the initial terms are [0, 0].

For $\phi(A^e,1,1)$:
\begin{align*}
p_{0}(x)&=x^8 + x^6 + x^5,\\
p_{1}(x)&=x^2 + x,\\
p_{2}(x)&=x,\\
p_{4}(x)&=1.
\end{align*}
and the initial terms are [0, 0].

For $\phi(A^o,0,0)$:
\begin{align*}
p_{0}(x)&=x^6 + x^4 + x^3 + x^2 + 1,\\
p_{1}(x)&=x+1,\\
p_{2}(x)&=x,\\
p_{4}(x)&=x^2.
\end{align*}
and the initial terms are [1, 0].

For $\phi(A^o,0,1)$:
\begin{align*}
	p_{0}(x)&=x^{11} + x^5 + x^4 + x^3 + x^2,\\
p_{1}(x)&=x^3 + x^2 + x + 1,\\
p_{2}(x)&=0,\\
	p_{4}(x)&=x^7.
\end{align*}
and the initial terms are [0, 0].

For $\phi(A^o,1,0)$:
\begin{align*}
p_{0}(x)&=x,\\
p_{1}(x)&=1.
\end{align*}
and the initial terms are [0, 1].
For $\phi(A^o,1,1)$:
\begin{align*}
p_{0}(x)&=x^6 + x^4 + x^3,\\
p_{1}(x)&=x+1,\\
p_{2}(x)&=x,\\
p_{4}(x)&=x^2.
\end{align*}
and the initial terms are [0, 0].

For all $0\leq i,j\leq 1$, $\phi(B^e,i,j)=\phi(A^o,i,j)$, $\phi(B^o,i,j)=
\phi(Ae,i,j)$. The corresponding initial conditions are also the same.
We happen to have $Ae=Bo$ and $Ao=Be$ here.

\subsection{Data for subsection \ref{subsection:pd2}}
All $16$ polynomials are of the form  
$$p_0(x)+p_3(x)y^3+p_6(x)y^6+p_9(x)y^9+p_{12}(x)y^{12}.$$
For $(i,j)=(1,0)$ and all $T\in\{A^e, A^o, B^e, B^o\}$, 
$$p_6(x)=p_9(x)=p_{12}(x)=0.$$
The coefficients $p_j(x)$ and the initial terms to determine the solutions uniquely
are given below.

 For $\phi(A^e,0,0)$:
 \begin{align*}
	 p_0(x)&=x^{24} + x^{22} + x^{20} + x^{17} + x^{16} + x^{14} + x^{13} + x^{5} + x^{3} + x + 1,\\
	 p_3(x)&=x^{15} + x^{14} + x^{12} + x^{9} + x^{6} + x^{2},\\
	 p_6(x)&= x^{11} + x^{9} + x^{8} + x^{7} + x^{5} + x^{3} + x^{2} + x,\\
	 p_9(x)&=  x^6 + x^4 + x^3 + x,\\
	 p_{12}(x)&=x^2 + x + 1.
 \end{align*}
 and the initial terms are $[1,1]$.

 For $\phi(A^e,0,1)$:
 \begin{align*}
	 p_0(x)&=x^{42} + x^{41} + x^{39} + x^{35} + x^{34} + x^{33} + x^{32} + x^{30} + x^{26} + x^{25} + x^{24}\\&\quad + x^{20} + x^{18} + x^{17} + x^{16} + x^{15} + x^{14} + x^{12} + x^{11} + x^{10} + x^{9},\\
	 p_3(x)&=x^{29} + x^{27} + x^{26} + x^{24} + x^{21} + x^{19} + x^{18} + x^{16} + x^{13} + x^{11} + x^{10}\\&\quad + x^{8} + x^{5} + x^{3} + x^{2} + 1,\\
	 p_6(x)&=x^{28} + x^{27} + x^{25} + x^{24} + x^{12} + x^{11} + x^{9} + x^{8},\\
	 p_9(x)&= x^{27} + x^{24} + x^{19} + x^{16}\\
	 p_{12}(x)&=x^{26} + x^{25} + x^{24}.
 \end{align*}
 and the initial terms are $[0,0,0 ,1]$.

 For $\phi(A^e,1,0)$:
 \begin{align*}
	 p_0(x)&=x^9,\\
	 p_3(x)&=x^2+x+1.
 \end{align*}
 and the initial terms are $[0,0,0 ,1]$.

 For $\phi(A^e,1,1)$:
 \begin{align*}
	 p_0(x)&=x^{30} + x^{29} + x^{28} + x^{25} + x^{24} + x^{23} + x^{21},\\
	 p_3(x)&=x^{21} + x^{19} + x^{17} + x^{16} + x^{15} + x^{14} + x^{11} + x^{9} + x^{8} + x^{7} + x^{4} + x^{3},\\
	 p_6(x)&=x^{17} + x^{16} + x^{15} + x^{14} + x^{13} + x^{10} + x^{7} + x^{5} + x^{3} + x^{2} ,\\
	 p_9(x)&=x^{12} + x^{11} + x^{10} + x^{9} + x^{8} + x^{7} + x^{6} + x^{5} + x^{4} + x^{3} + x^{2} + x,\\
	 p_{12}(x)&=x^{8} + x^{4} + 1.
 \end{align*}
 and the initial terms are $[0, 0, 0, 0, 0, 0, 1, 1]$.

 For $\phi(A^o,0,0)$:
 \begin{align*}
	 p_0(x)&=x^{30} + x^{28} + x^{27} + x^{25} + x^{24} + x^{23} + x^{22} + x^{20} + x^{14} + x^{13} + x^{7}\\ &\quad + x^{6} + x^{5} + x^{4} + 1,\\
	 p_3(x)&=x^{22} + x^{21} + x^{17} + x^{16} + x^{15} + x^{11} + x^{8} + x^{7} + x^{6} + x^{2} + x + 1,\\
	 p_6(x)&=x^{23} + x^{22} + x^{20} + x^{16} + x^{15} + x^{13} + x^{12} + x^{9} + x^{7} + x^{6} + x^{5} + x^{2},\\
	 p_9(x)&=x^{21} + x^{18} + x^{17} + x^{15} + x^{14} + x^{13} + x^{12} + x^{11} + x^{10} + x^{8} + x^{7} + x^{4},\\
	 p_{12}(x)&=x^{22} + x^{14} + x^{6}.
 \end{align*}
 and the initial terms are $[1,1]$.

 For $\phi(A^o,0,1)$:
 \begin{align*}
	 p_0(x)&=x^{48} + x^{47} + x^{43} + x^{42} + x^{41} + x^{40} + x^{39} + x^{36} + x^{35} + x^{34} + x^{33}\\ &\quad + x^{32} + x^{27} + x^{25} + x^{24} + x^{14} + x^{12} + x^{11} + x^{9},\\
	 p_3(x)&=x^{43} + x^{42} + x^{41} + x^{40} + x^{27} + x^{26} + x^{25} + x^{24} + x^{19} + x^{18} + x^{17}\\ &\quad + x^{16} + x^{3} + x^{2} + x + 1,\\
	 p_6(x)&=x^{44} + x^{42} + x^{36} + x^{34} + x^{20} + x^{18} + x^{12} + x^{10},\\
	 p_9(x)&=x^{45} + x^{44} + x^{21} + x^{20},\\
	 p_{12}(x)&=x^{46} + x^{38} + x^{30}.
 \end{align*}
 and the initial terms are $[0,0,0 ,1]$.
 
 For $\phi(A^o,1,0)$:
 \begin{align*}
	 p_0(x)&=x^6,\\
	 p_3(x)&=x^4+x^2+1.
 \end{align*}
 and the initial terms are $[0,0,1]$.

 For $\phi(A^o,1,1)$:
 \begin{align*}
	 p_0(x)&=x^{24} + x^{23} + x^{22} + x^{19} + x^{18} + x^{17} + x^{15},\\
	 p_3(x)&=x^{20} + x^{19} + x^{17} + x^{14} + x^{13} + x^{11} + x^{10} + x^{9} + x^{6} + x^{4} + x^{3} + 1,\\
	 p_6(x)&=x^{21} + x^{20} + x^{17} + x^{13} + x^{12} + x^{11} + x^{10} + x^{7} + x^{6} + x^{4} + x^{3} + x^{2},\\
	 p_9(x)&=x^{21} + x^{18} + x^{17} + x^{15} + x^{14} + x^{13} + x^{12} + x^{11} + x^{10} + x^{8} + x^{7} + x^{4},\\
	 p_{12}(x)&=x^{22} + x^{14} + x^{6}.
 \end{align*}
 and the initial terms are $[0,0,0,0,0,1 ]$.

 For $\phi(B^e,0,0)$:
 \begin{align*}
	 p_0(x)&=x^{30} + x^{28} + x^{27} + x^{25} + x^{24} + x^{23} + x^{22} + x^{20} + x^{14} + x^{13} + x^{7}\\ &\quad + x^{6} + x^{5} + x^{4} + 1,\\
	 p_3(x)&=x^{20} + x^{18} + x^{17} + x^{14} + x^{12} + x^{11} + x^{6} + 1,\\
	 p_6(x)&=x^{19} + x^{18} + x^{17} + x^{14} + x^{13} + x^{10} + x^{6} + x^{5} + x^{4} + x^{2},\\
	 p_9(x)&=x^{15} + x^{14} + x^{13} + x^{12} + x^{11} + x^{10} + x^{9} + x^{8} + x^{7} + x^{6} + x^{5} + x^{4},\\
	 p_{12}(x)&=x^{14} + x^{10} + x^{6}.
 \end{align*}
 and the initial terms are $[1,0]$.

 For $\phi(B^e,0,1)$:
 \begin{align*}
	 p_0(x)&=x^{48} + x^{47} + x^{43} + x^{42} + x^{41} + x^{40} + x^{39} + x^{36} + x^{35} + x^{34} + x^{33}\\ &\quad + x^{32} + x^{27} + x^{25} + x^{24} + x^{14} + x^{12} + x^{11} + x^{9},\\
	 p_3(x)&=x^{41} + x^{38} + x^{37} + x^{35} + x^{34} + x^{32} + x^{31} + x^{29} + x^{28} + x^{26} + x^{23}\\ &\quad + x^{21} + x^{20} + x^{18} + x^{15} + x^{13} + x^{12} + x^{10} + x^{9} + x^{7} + x^{6} + x^{4} + x^{3} + 1,\\
	 p_6(x)&=x^{40} + x^{36} + x^{34} + x^{32} + x^{30} + x^{26} + x^{24} + x^{20} + x^{18} + x^{16} + x^{14} + x^{10},\\
	 p_9(x)&=x^{39} + x^{36} + x^{35} + x^{32} + x^{27} + x^{24} + x^{23} + x^{20},\\
	 p_{12}(x)&=x^{38} + x^{34} + x^{30}.
 \end{align*}
 and the initial terms are $[0,0,0,1 ]$.

 For $\phi(B^e,1,0)$:
 \begin{align*}
	 p_0(x)&=x^6,\\
	 p_3(x)&=x^2+x+1.
 \end{align*}
 and the initial terms are $[0,0,1,1 ]$.

 For $\phi(B^e,1,1)$:
 \begin{align*}
	 p_0(x)&=x^{24} + x^{23} + x^{22} + x^{19} + x^{18} + x^{17} + x^{15},\\
	 p_3(x)&=x^{18} + x^{16} + x^{14} + x^{13} + x^{12} + x^{11} + x^{8} + x^{6} + x^{5} + x^{4} + x + 1,\\
	 p_6(x)&=x^{17} + x^{16} + x^{15} + x^{14} + x^{13} + x^{10} + x^{7} + x^{5} + x^{3} + x^{2},\\
	 p_9(x)&=x^{15} + x^{14} + x^{13} + x^{12} + x^{11} + x^{10} + x^{9} + x^{8} + x^{7} + x^{6} + x^{5} + x^{4},\\
	 p_{12}(x)&=x^{14} + x^{10} + x^{6}.
 \end{align*}
 and the initial terms are $[ 0,0,0,0,0,1 ]$.

 For $\phi(B^o,0,0)$:
 \begin{align*}
	 p_0(x)&=x^{24} + x^{22} + x^{20} + x^{17} + x^{16} + x^{14} + x^{13} + x^{5} + x^{3} + x + 1 ,\\
	 p_3(x)&=x^{17} + x^{13} + x^{12} + x^{11} + x^{10} + x^{9} + x^{8} + x^{7} + x^{6} + x^{4} + x^{3} + x^{2},\\
	 p_6(x)&=x^{15} + x^{12} + x^{11} + x^{10} + x^{9} + x^{8} + x^{7} + x^{6} + x^{5} + x^{4} + x^{2} + x,\\
	 p_9(x)&=x^{12} + x^{11} + x^{10} + x^{9} + x^{8} + x^{7} + x^{6} + x^{5} + x^{4} + x^{3} + x^{2} + x,\\
	 p_{12}(x)&=x^{10} + x^{9} + x^{8} + x^{6} + x^{5} + x^{4} + x^{2} + x + 1.
 \end{align*}
 and the initial terms are $[1,0]$.

 For $\phi(B^o,0,1)$:
 \begin{align*}
	 p_0(x)&=x^{42} + x^{41} + x^{39} + x^{35} + x^{34} + x^{33} + x^{32} + x^{30} + x^{26} + x^{25} + x^{24}\\ &\quad + x^{20} + x^{18} + x^{17} + x^{16} + x^{15} + x^{14} + x^{12} + x^{11} + x^{10} + x^{9},\\
	 p_3(x)&=x^{31} + x^{30} + x^{25} + x^{24} + x^{23} + x^{22} + x^{17} + x^{16} + x^{15} + x^{14} + x^{9}\\ &\quad + x^{8} + x^{7} + x^{6} + x + 1,\\
	 p_6(x)&=x^{32} + x^{31} + x^{30} + x^{26} + x^{25} + x^{24} + x^{16} + x^{15} + x^{14} + x^{10} + x^{9} + x^{8},\\
	 p_9(x)&=x^{33} + x^{32} + x^{29} + x^{28} + x^{21} + x^{20} + x^{17} + x^{16},\\
	 p_{12}(x)&=x^{34} + x^{33} + x^{32} + x^{30} + x^{29} + x^{28} + x^{26} + x^{25} + x^{24}.
 \end{align*}
 and the initial terms are $[0,0,0,1 ]$.

 For $\phi(B^o,1,0)$:
 \begin{align*}
	 p_0(x)&=x^9,\\
	 p_3(x)&=x^4+x^2+1.
 \end{align*}
 and the initial terms are $[0,0,0,1 ]$.
 
 For $\phi(B^o,1,1)$:
 \begin{align*}
	 p_0(x)&=x^{30} + x^{29} + x^{28} + x^{25} + x^{24} + x^{23} + x^{21},\\
	 p_3(x)&=x^{23} + x^{22} + x^{20} + x^{17} + x^{16} + x^{14} + x^{13} + x^{12} + x^{9} + x^{7} + x^{6} + x^{3}
,\\
	 p_6(x)&=x^{21} + x^{20} + x^{17} + x^{13} + x^{12} + x^{11} + x^{10} + x^{7} + x^{6} + x^{4} + x^{3} + x^{2},\\
	 p_9(x)&=x^{18} + x^{15} + x^{14} + x^{12} + x^{11} + x^{10} + x^{9} + x^{8} + x^{7} + x^{5} + x^{4} + x,\\
	 p_{12}(x)&=x^{16} + x^{8} + 1.
 \end{align*}
 and the initial terms are $[0,0,0,0,0,0,1 ]$.

\bibliographystyle{plain}

\bibliography{article}

\end{document}